\newtheorem{theorem}{Theorem}[section]
\newtheorem{lemma}[theorem]{Lemma}
\newtheorem{proposition}[theorem]{Proposition}
\newtheorem{corollary}[theorem]{Corollary}
\newtheorem{definition}[theorem]{Definition}
\newtheorem{algorithm}[theorem]{Algorithm}
\newtheorem{problem}{Problem}
\newcommand{\cref}[1]{Corollary~\textup{\ref{cor:#1}}}
\newcommand{\G}{\Gamma}
\newcommand{\calP}{\mathcal P}
\newcommand{\calQ}{\mathcal Q}
\newcommand{\calK}{\mathcal K}
\newcommand{\comment}[1]{}
\newcommand{\ol}[1]{\overline{#1}}
\newcommand{\eps}{\epsilon}
\newcommand{\mix}{\diamond}
\newcommand{\comix}{\boxempty}
\newcommand{\s}{\sigma}
\newcommand{\Z}{\mathbb Z}
\DeclareMathOperator{\Norm}{Norm}
\DeclareMathOperator{\Cent}{Cent}
\DeclareMathOperator{\lcm}{lcm}
\DeclareMathOperator{\Aut}{Aut}
\begin{document}

\title{Internal and external duality in abstract polytopes}

\author{Gabe Cunningham\\
University of Massachusetts Boston\\
and \\
Mark Mixer \\
Wentworth Institute of Technology
}

\date{ \today }
\maketitle

\begin{abstract}
	We define an abstract regular polytope to be internally self-dual if its self-duality can be realized
	as one of its symmetries. This property has many interesting implications on the structure of the polytope,
	which we present here. Then, we construct many examples of internally self-dual polytopes.
	In particular, we show that there are internally self-dual regular polyhedra of each
	type $\{p, p\}$ for $p \geq 3$ and that there are both infinitely many internally self-dual and infinitely many externally self-dual polyhedra
	of type $\{p, p\}$ for $p$ even. We also show that there are internally self-dual polytopes in each rank,
	including a new family of polytopes that we construct here.
	
\vskip.1in
\medskip
\noindent
Key Words: abstract polytope, self-dual polytope, regular polytope

\medskip
\noindent
AMS Subject Classification (2010):  20B25, 52B15, 51M20, 52B05
\comment{
	20B25: Finite automorphism groups of algebraic, geometric, or combinatorial structures.
	51M20: Polyhedra and polytopes; regular figures, division of spaces
	52B05: Combinatorial properties of polyhedra (number of faces etc.)
	52B15: Symmetry properties of polytopes
}

\end{abstract}

\section{Introduction}
Whenever a polytope is invariant under a transformation such as duality, we often describe this as an ``external'' symmetry
of the polytope. In the context of abstract regular polytopes, self-duality manifests as an automorphism of the 
symmetry group of the polytope. For example, the $n$-simplex is self-dual, and this fact is reflected by an
automorphism of its symmetry group $S_{n+1}$. Since the symmetric group $S_{n+1}$ has no nontrivial outer
automorphisms (unless $n+1 = 6$), we see that in general the self-duality of the simplex must somehow
correspond to an ordinary rank-preserving symmetry. What does this mean geometrically? What other polytopes
have this property that self-duality yields an inner automorphism?

In this paper, we define a regular self-dual abstract polytope to be \emph{internally self-dual} if
the group automorphism induced by its self-duality is an inner automorphism. Otherwise, a regular
self-dual polytope is \emph{externally self-dual}. 
Our search for internally self-dual regular polytopes started with the atlas of regular polytopes
with up to 2000 flags \cite{conder-atlas}. Using a computer algebra system, we found the following results on the number of regular polytopes (up to isomorphism and duality).

\begin{table}[h]
\begin{tabular}{ccccc}
\textbf{Rank} & \textbf{Total} & \textbf{Self-dual} & \textbf{Internally self-dual} & \textbf{Externally self-dual} \\
3 & 3571 & 242 & 54 & 188 \\
4 & 2016 & 156 &  2 & 154 \\
5 &  258 &  15 &  1 &  14 \\
\end{tabular}
\caption{Data on dualities of small regular polytopes  \label{MarstonData}}
\end{table}

The internally self-dual polyhedra we found include examples of type $\{p, p\}$ for $3 \leq p \leq 12$ and
$p = 15$. In rank 4, the only two examples are the simplex $\{3, 3, 3\}$ and a toroid of type $\{4, 3, 4\}$.
In rank 5, the only example is the simplex $\{3, 3, 3, 3\}$. 

The data suggest that there are many internally self-dual polyhedra (although many more externally self-dual). Indeed, we will show that there are internally
self-dual polyhedra of type $\{p, p\}$ for each $p \geq 3$. The data in ranks 4 and 5 seem less promising as
far as the existence of internally self-dual polytopes. We will show, however, that there are several families
of internally self-dual polytopes in every rank. Furthermore, due to the ubiquity of symmetric groups, it seems
likely that in every rank, there are many polytopes that are internally self-dual for the same reason as the simplex.

In addition to the existence results mentioned above, we explore the consequences of a self-dual polytope being
either externally or internally self-dual.

The paper is organized as follows. In section~\ref{Background} we briefly outline the necessary definitions and background on abstract polytopes.   Section~\ref{Basics} consists of basic results following from the definition of an internally self-dual polytope.    In section~\ref{Structural}, more structural results about internally self-dual polytopes are examined.  Section~\ref{Examples} includes existence results regarding both self-dual regular polyhedra and self-dual polytopes of higher ranks.  Finally, in section~\ref{SelfPetrie}, we highlight a few open questions and variants of the problems considered here.

\section{Background \label{Background}}
Most of our background, along with many more details can be found in from \cite{arp}.
An {\em abstract $n$-polytope} $\calP$ is a ranked partially ordered set of {\em faces} with four defining properties.  First, $\calP$ has a least face $F_{-1}$ of rank $-1$, and a greatest face $F_{n}$ of rank $n$; here if an $F \in \calP$ has rank$(F)=i$, then $F$ is called an \textit{$i$-face}. Faces of rank $0$, $1$, and $n-1$ are called \emph{vertices}, \emph{edges}, and \emph{facets}, respectively.  Second, each maximal totally ordered subset of $\calP$ contains $n+2$ faces.  These maximal totally ordered subsets of $\calP$ are called {\em flags}.   Third, if $F < G$ with rank$(F) = i-1$ and rank$(G)=i+1$, then there are exactly two $i$-faces $H$ with $F < H < G$.  Finally, $\calP$ is strongly connected, which is defined as follows.  For any two faces $F$ and $G$ of $\calP$ with $F < G$, we call $G/F:=\{H \mid H \in \calP, \  F \leq H \leq G\}$ a \textit{section} of $\calP$.  If $\calP$ is a partially ordered set satisfying the first two properties, then $\calP$ is \textit{connected\/} if either $n \leq 1$, or $n \geq 2$ and for any two faces $F$ and $G$ of $\calP$ (other than $F_{-1}$ and $F_{n}$) there is a sequence of faces $F = H_0, H_1,\ldots,H_{k-1},H_k = G$, not containing  $F_{-1}$ and $F_{n}$, such that $H_i$ and $H_{i-1}$ are comparable for $i=1,\ldots,k$. We say that $\calP$ is \textit{strongly connected\/} if each section of $\calP$ (including $\calP$ itself) is connected.  Due to the relationship between abstract polytopes and incidence geometries, if two faces $F$ and $G$ are comparable in the partial order, then it is said that $F$ is \emph{incident on} $G$.

Two flags of an $n$-polytope $\calP$ are said to be \textit{adjacent\/} if they differ by exactly one face. If $\Phi$ is a flag of $\calP$, the third defining property of an abstract polytope tells us that for $i=0,1,\ldots,n-1$ there is exactly one flag that differs from $\Phi$ in its $i$-face. This flag is denoted $\Phi^i$ and is {\em i-adjacent\/} to $\Phi$. We extend this notation recursively and let $\Phi^{i_1 \cdots i_k}$ denote the flag $(\Phi^{i_1 \cdots i_{k-1}})^{i_k}$. If $w$ is a finite sequence $(i_1, \ldots, i_k)$, then $\Phi^w$ denotes $\Phi^{i_1 \cdots i_k}$. 
Note that $\Phi^{ii}=\Phi$ for each $i$, and $\Phi^{ij}=\Phi^{ji}$ if $|i-j|>1$. 
An $n$-polytope $(n \geq 2)$ is called \textit{equivelar} if for each $i=1,2,...,d-1$, there is an integer $p_i$ so that every section $G/F$ defined by an $(i-2)$-face $F$ incident on an $(i+1)$-face $G$ is a $p_i$-gon. If $\calP$ is equivelar we say that it has (\textit{Schl\"{a}fli\/}) {\em type} $\{p_1,p_2,\ldots,p_{n-1}\}$.  

The automorphism group of an $n$-polytope $\calP$ (consisting of the order-preserving bijections from $\calP$ to itself) is denoted by $\Gamma( \calP )$.  For any flag $\Phi$, any finite sequence $w$, and any automorphism $\varphi$, we have $\Phi^w \varphi = (\Phi \varphi)^w$. An $n$-polytope $\calP$ is called \textit{regular} if its automorphism group $\Gamma( \calP )$ has exactly one orbit when acting on its flags of $\calP$, or equivalently, if for some \emph{base flag} $\Phi = \{F_{-1},F_0,F_1,\ldots,F_n\}$ and each $i=0,1,\ldots,n-1$ there exists a unique involutory automorphism $\rho_i \in \Gamma(\calP)$ such that $\Phi \rho_i =\Phi^i$.  For a regular $n$-polytope $\calP$, its group $\Gamma( \calP )$ is generated by the involutions $\rho_0,\ldots,\rho_{n-1}$ described above. These generators satisfy the relations
\begin{equation}
\label{coxrel}
(\rho_i \rho_j)^{p_{ij}}=\eps \quad (0\leq i,j \leq n-1),
\end{equation}
where $p_{ii}=1$ for all $i$, $2 \leq p_{ji} = p_{ij} $ if $j=i-1$, and  
\begin{equation}
\label{string}
p_{ij}=2 \textrm{ for } |i-j| \geq 2.
\end{equation}

Any group $\langle \rho_0,\ldots,\rho_{n-1} \rangle$ satisfying properties~\ref{coxrel} and \ref{string} is called a {\em string group generated by involutions}. Moreover, $\Gamma( \calP )$ has the following \textit{intersection property}:
\begin{equation}
\label{intprop}
\langle \rho_i \mid i\in I \rangle \cap \langle \rho_i \mid i\in J \rangle 
= \langle \rho_i \mid i\in  I  \cap J \rangle \,\textrm{ for }\, I,J \subseteq \{0,\ldots,n-1\}.
\end{equation}
Any string group generated by involutions that has this intersection property is called a {\em string C-group}. The group $\Gamma( \calP )$ of an abstract regular polytope $\calP$ is a string C-group.  Conversely, it is known (see \cite[Sec. 2E]{arp}) that an abstract regular $n$-polytope can be constructed uniquely from any string C-group.  This correspondence between string C-groups and automorphism groups of abstract regular polytopes allows us to talk simulatneously about the combinatorics of the flags of abstract regular polytopes as well as the structure of their groups.

Let $\Gamma=\langle\rho_0,\,\ldots,\,\rho_{n-1}\rangle$ be a string group generated by involutions acting as a permutation group on a set $\{1,\,\ldots,\,k\}$.
We can define the {\em permutation representation graph} $\mathcal{X}$ as the $r$-edge-labeled multigraph with $k$ vertices, and with a single $i$-edge $\{a,\,b\}$ whenever $a\rho_i=b$ with $a < b$.  Note that, since each of the generators is an involution, the edges in our graph are not directed.  When $\Gamma$ is a string C-group, the multigraph $\mathcal{X}$ is called a \emph{CPR graph}, as defined in \cite{DanielCPR}.

If $\calP$ and $\calQ$ are regular polytopes, then we say that $\calP$ \emph{covers} $\calQ$ if there is a well-defined
surjective homomorphism from $\G(\calP)$ to $\G(\calQ)$ that respects the canonical generators. In other words,
if $\G(\calP) = \langle \rho_0, \ldots, \rho_{n-1} \rangle$ and $\G(\calQ) = \langle \rho_0', \ldots,
\rho_{n-1}' \rangle$, then $\calP$ covers $\calQ$ if there is a homomorphism that sends each $\rho_i$
to $\rho_i'$. 

Suppose $\G = \langle \rho_0, \ldots, \rho_{n-1} \rangle$ is a string group generated by involutions, and $\Lambda$ 
is a string C-group such that $\G$ covers $\Lambda$. If the covering is one-to-one on the subgroup $\langle \rho_0, \ldots, 
\rho_{n-2} \rangle$, then the \emph{quotient criterion} says that $\G$ is itself a string C-group \cite[Thm. 2E17]{arp}.

Given regular polytopes $\calP$ and $\calQ$, the \emph{mix} of their automorphism groups, denoted $\G(\calP) \mix
\G(\calQ)$, is the subgroup of the direct product $\G(\calP) \times \G(\calQ)$ that is generated by the
elements $(\rho_i, \rho_i')$. This group is the minimal string group generated by involutions that covers
both $\G(\calP)$ and $\G(\calQ)$ (where again, we only consider homomorphisms that respect the generators). 
Using the procedure in \cite[Sec. 2E]{arp}, we can build a poset from this, which
we call the mix of $\calP$ and $\calQ$, denoted $\calP \mix \calQ$. This definition naturally extends
to any family of polytopes (even an infinite family); see \cite[Section 5]{mixing-and-monodromy} for more details.

The \emph{comix} of $\G(\calP)$ with $\G(\calQ)$, denoted $\G(\calP) \comix \G(\calQ)$, is the largest
string group generated by involutions that is covered by both $\G(\calP)$ and $\G(\calQ)$ \cite{var-gps}. A presentation
for $\G(\calP) \comix \G(\calQ)$ can be obtained from that of $\G(\calP)$ by adding all of the relations of
$\G(\calQ)$, rewriting the relations to use the generators of $\G(\calP)$ instead.

The dual of a poset $\calP$ is the poset $\calP^*$ with the same underlying set as $\calP$, but with the partial order
reversed. Clearly, the dual of a polytope is itself a polytope. If $\calP^* \cong \calP$, then $\calP$ is said
to be \emph{self-dual}. A \emph{duality} $d: \calP \to \calP$ is an order-reversing bijection. 
If $w = (i_1, \ldots, i_k)$, then we define $w^*$ to be the sequence $(n-i_1-1, \ldots, n-i_k-1)$.
For any flag
$\Phi$, finite sequence $w$, and duality $d$, we have $\Phi^w d = (\Phi d)^{w*}$.

If $\calP$ is a self-dual regular $n$-polytope, then there is a group automorphism of $\G(\calP)$ that
sends each $\rho_i$ to $\rho_{n-i-1}$. 
If $\varphi \in \G(\calP)$, then we will denote by $\varphi^*$
the image of $\varphi$ under this automorphism. In particular, if $\varphi = \rho_{i_1} \cdots \rho_{i_k}$, 
then we define $\varphi^{*}$ to be $\rho_{n-i_1-1} \cdots \rho_{n-i_k-1}$.
Thus, if $\Phi$ is the base flag of $\calP$ and $\Phi \varphi = \Phi^w$, then $\Phi \varphi^*
= \Phi^{w*}$.

\section{Internal self-duality \label{Basics}}
\subsection{Basic notions}

As just noted, if $\calP$ is a self-dual regular $n$-polytope, then there is a group automorphism of
$\G(\calP)$ that sends each $\rho_i$ to $\rho_{n-i-1}$. When this automorphism is
inner, there is a polytope symmetry $\alpha \in \G(\calP)$ such that
$\alpha \rho_i = \rho_{n-i-1} \alpha$ for $0 \leq i \leq n-1$. We use this idea to motivate the following definitions.

\begin{definition}
\label{def-int-sd}
Suppose $\calP$ is a regular self-dual polytope. An automorphism $\alpha \in \G(\calP)$ is
a \emph{dualizing automorphism} (or simply \emph{dualizing}) if $\alpha \rho_i = \rho_{n-i-1} \alpha$
for $0 \leq i \leq n-1$ (equivalently, if $\alpha \varphi = \varphi^* \alpha$ for every $\varphi \in \G(\calP)$).
A regular self-dual polytope is \emph{internally self-dual} if it has a dualizing automorphism.
Otherwise, it is \emph{externally self-dual}.
\end{definition}

Depending on the automorphism group of a polytope, we can sometimes determine internal self-duality
without knowing anything deep about the polytope's structure.

\begin{proposition}
\label{symmetric}
Suppose $\calP$ is a regular self-dual polytope, and that $\G(\calP)$ is a symmetric group.
Then $\calP$ is internally self-dual unless $\G(\calP) \cong S_6$ and $\calP$ has type
$\{6, 6\}$ or $\{4, 4, 4\}$.
\end{proposition}

\begin{proof}
For $k \neq 6$, the symmetric group $S_k$ has no nontrivial outer automorphisms, and so a self-dual
polytope with this automorphism group must be internally self-dual. Up to isomorphism, there are
11 regular polytopes with automorphism group $S_6$; eight of them are not self-dual, one of them
(the 5-simplex) is internally self-dual, and the remaining two (denoted $\mbox{\{6,6\}*720a}$ and $\mbox{\{4, 4, 4\}*720}$)
are externally self dual \cite{atlas}.
\end{proof}

It would perhaps be possible to find other abstract results of this type, but they could never tell the whole
story. After all, it is possible for the automorphism that sends each $\rho_i$ to $\rho_{n-i-1}$ to be inner, even
if $\G(\calP)$ has nontrivial outer automorphisms. So let us shift our focus
away from the abstract groups.

What does it mean geometrically to say that $\calP$ is internally self-dual? Let $\Phi$ be
the base flag of $\calP$, and let $\Psi = \Phi \alpha$ for some dualizing automorphism $\alpha$.
Let~$\varphi \in \G(\calP)$ and suppose that $\Phi \varphi = \Phi^w$. Then
\begin{align*}
\Psi \varphi &= 
\Phi \alpha \varphi \\
&= \Phi \varphi^* \alpha \\
&= \Phi^{w^*} \alpha \\
&= (\Phi \alpha)^{w^*} \\
&= \Psi^{w^*}.
\end{align*}
In other words, every automorphism acts on $\Psi$ dually to how it acts on $\Phi$.

\begin{definition}
\label{def-dual-flag}
We say that flags $\Phi$ and $\Psi$ are \emph{dual} (to each other) if every automorphism acts
dually on $\Phi$ and $\Psi$. That is, if $\Phi \varphi = \Phi^{w}$, then
$\Psi \varphi = \Psi^{w^*}$.
\end{definition}

Note that if $\calP$ is regular and $\Psi$ is dual to $\Phi$, then in particular $\Psi \rho_i = \Psi^{n-i-1}$.

\begin{proposition}
\label{int-sd-iff-dual-flag}
A regular polytope $\calP$ is internally self-dual if and only if its base flag $\Phi$ has a dual flag $\Psi$.
\end{proposition}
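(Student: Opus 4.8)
The plan is to prove the two implications separately, leaning throughout on the fact that for a regular polytope $\G(\calP)$ acts \emph{simply transitively} on the flags of $\calP$; in particular an automorphism is completely determined by the image of the base flag $\Phi$, so any identity between two automorphisms can be verified by applying both to $\Phi$.

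The forward direction is essentially already done in the computation preceding Definition~\ref{def-dual-flag}, so I would simply point back to it. If $\calP$ is internally self-dual, fix a dualizing automorphism $\alpha$ and set $\Psi := \Phi\alpha$. For any $\varphi \in \G(\calP)$ with $\Phi\varphi = \Phi^{w}$, that displayed chain of equalities gives $\Psi\varphi = \Psi^{w^*}$, which is exactly the statement that $\Psi$ is dual to $\Phi$. Hence $\Phi$ has a dual flag.

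For the converse, suppose $\Phi$ has a dual flag $\Psi$. By simple transitivity there is a unique $\alpha \in \G(\calP)$ with $\Phi\alpha = \Psi$, and I claim this $\alpha$ is dualizing. Since two automorphisms agreeing on $\Phi$ are equal, it suffices to check that $\alpha\varphi$ and $\varphi^*\alpha$ send $\Phi$ to the same flag for each $\varphi$. Writing $\Phi\varphi = \Phi^{w}$, the left side gives $\Phi(\alpha\varphi) = \Psi\varphi = \Psi^{w^*}$ by the dual-flag hypothesis, while the right side gives $\Phi(\varphi^*\alpha) = (\Phi\varphi^*)\alpha = \Phi^{w^*}\alpha = (\Phi\alpha)^{w^*} = \Psi^{w^*}$, using the relation $\Phi\varphi^* = \Phi^{w^*}$ from the background and the commuting rule $\Phi^{w^*}\alpha = (\Phi\alpha)^{w^*}$. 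Thus $\alpha\varphi = \varphi^*\alpha$ for all $\varphi$; specializing to $\varphi = \rho_i$ yields $\alpha\rho_i = \rho_{n-i-1}\alpha$, so $\alpha$ is a dualizing automorphism and $\calP$ is internally self-dual.

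I do not expect a serious obstacle here; the proof is a matter of bookkeeping the $*$ operation correctly. The one point deserving care is that $\Phi^{w^*}$ is well defined independently of which word $w$ represents $\varphi$, but this is automatic because $\varphi^*$ is itself a well-defined group automorphism with $\Phi\varphi^* = \Phi^{w^*}$. The essential structural input in both directions is the triviality of the flag stabilizer, which lets me collapse every identity among automorphisms into a single evaluation on the base flag.
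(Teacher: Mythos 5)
Your proof is correct and follows essentially the same route as the paper: the forward direction defers to the computation preceding Definition~\ref{def-dual-flag}, and the converse takes the $\alpha$ with $\Phi\alpha = \Psi$ and verifies the dualizing identity by evaluating both sides on the base flag and invoking simple transitivity. The only cosmetic difference is that the paper checks $\alpha\rho_i\alpha^{-1} = \rho_{n-i-1}$ on the generators alone, using only $\Psi\rho_i = \Psi^{n-i-1}$ and thus never needing the $*$ operation (nor the prior knowledge that $\calP$ is self-dual) in the converse, whereas you verify the full relation $\alpha\varphi = \varphi^*\alpha$ for every $\varphi$.
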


\begin{proof}
The discussion preceding Definition~\ref{def-dual-flag} shows that if $\calP$ is internally self-dual, then
$\Phi$ has a dual flag. Conversely, suppose that $\Phi$ has a dual flag $\Psi$.
Since $\calP$ is regular, there is an automorphism $\alpha \in \G(\calP)$ such that $\Psi = \Phi \alpha$.
Then for $0 \leq i \leq n-1$,
\begin{align*}
\Phi \alpha \rho_i \alpha^{-1} &= \Psi \rho_i \alpha^{-1} \\
&= \Psi^{n-i-1} \alpha^{-1} \\
&= (\Psi \alpha^{-1})^{n-i-1} \\
&= \Phi^{n-i-1}.
\end{align*}
So $\alpha \rho_i \alpha^{-1}$ acts on $\Phi$ the same way that $\rho_{n-i-1}$ does, and since the
automorphism group acts regularly on the flags, it follows that $\alpha \rho_i \alpha^{-1} = \rho_{n-i-1}$
for each $i$. Thus $\alpha$ is a dualizing automorphism, and $\calP$ is internally self-dual.
\end{proof}

Proposition~\ref{int-sd-iff-dual-flag} provides an intuitive way to determine whether a regular polytope
is internally self-dual: try to find a flag that is dual to the base flag. 
Let us consider some simple examples. Let $\calP = \{p\}$, with $3 \leq p < \infty$. Fix
a vertex and an edge in the base flag $\Phi$. In order for $\Psi$ to be dual to $\Phi$, we need
for $\Psi \rho_0 = \Psi^1$, which in particular means that $\rho_0$ fixes the vertex of $\Psi$.
Now, whenever $p$ is even or infinite, the reflection $\rho_0$ does not fix any vertices, and so
$\calP$ must be externally self dual (See Figure~\ref{figure-pgons}). When $p$ is odd, then there is a unique vertex $v$
fixed by $\rho_0$. Furthermore, there is an edge incident to $v$ that is fixed by $\rho_1$. The flag $\Psi$
consisting of this vertex and edge is dual to $\Phi$, and so in this case, $\calP$ is internally self-dual.
Indeed, when $p$ is odd, then the automorphism $(\rho_0 \rho_1)^{(p-1)/2} \rho_0$ is dualizing.
The following result is then clear.

\begin{proposition}
\label{p-gons}
The $p$-gon $\calP = \{p\}$ is internally self-dual if and only if $p$ is odd.
\end{proposition}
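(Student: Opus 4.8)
The plan is to apply Proposition~\ref{int-sd-iff-dual-flag} and reduce the statement to the existence of a flag dual to the base flag $\Phi$. Since $\calP$ has rank $n = 2$, the operation $w \mapsto w^*$ simply interchanges the indices $0$ and $1$, so a flag $\Psi$ is dual to $\Phi$ exactly when $\Psi \rho_0 = \Psi^1$ and $\Psi \rho_1 = \Psi^0$. A flag of the $p$-gon is an incident vertex--edge pair, and I would track these two conditions separately.

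First I would extract the necessary condition coming from $\Psi \rho_0 = \Psi^1$: this says that $\rho_0$ changes the edge of $\Psi$ while fixing its vertex, so a dual flag can exist only if $\rho_0$ fixes some vertex of $\calP$. Now $\rho_0$ fixes the base edge and swaps its two endpoints, so combinatorially it acts on the cycle of $p$ vertices as the reflection whose axis passes through the midpoint of the base edge. When $p$ is even this axis meets the midpoint of the opposite edge and fixes no vertex, whereas when $p$ is odd it passes through the unique opposite vertex. This settles the ``only if'' direction: for $p$ even there is no admissible vertex for $\Psi$, so $\calP$ is externally self-dual.

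For the converse I would exhibit a dual flag when $p$ is odd. Letting $v$ be the vertex fixed by $\rho_0$, the flag $\Psi$ built from $v$ and a suitable incident edge is the natural candidate, but one must still verify the second condition $\Psi \rho_1 = \Psi^0$, i.e.\ that $\rho_1$ fixes the chosen edge. Rather than argue this geometrically, I would check directly that $\alpha = (\rho_0 \rho_1)^{(p-1)/2} \rho_0$ is a dualizing automorphism, so that $\Psi = \Phi\alpha$ is automatically dual to $\Phi$. Writing $r = \rho_0\rho_1$ and using $\rho_0 r \rho_0 = r^{-1}$ together with $r^p = \eps$, a short dihedral-group calculation yields $\alpha \rho_0 \alpha^{-1} = \rho_1$ and $\alpha \rho_1 \alpha^{-1} = \rho_0$, which are exactly the relations $\alpha \rho_i = \rho_{1-i}\alpha$ required by Definition~\ref{def-int-sd}.

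The argument presents no serious obstacle, as the discussion preceding the statement already does most of the work; the one point demanding care is the sufficiency direction, where fixing a vertex with $\rho_0$ is necessary but not on its own sufficient. The cleanest way to close this gap is the explicit conjugation check for $\alpha$, which crucially uses the oddness of $p$ through the exponent $(p-1)/2$, so that $r^{p-1} = r^{-1}$ and the two conjugation identities come out correctly.
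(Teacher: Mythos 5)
Your proposal is correct and follows essentially the same route as the paper, whose argument is the discussion immediately preceding the proposition: the necessity direction via the observation that $\rho_0$ must fix the vertex of a dual flag (impossible for $p$ even), and sufficiency for $p$ odd via the same dualizing element $(\rho_0\rho_1)^{(p-1)/2}\rho_0$. Your explicit dihedral-group verification of the conjugation identities is a slightly more careful way of closing the sufficiency step than the paper's brief geometric remark, but it is the same proof in substance.
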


\begin{figure}[h]
$$\includegraphics[scale=.25]{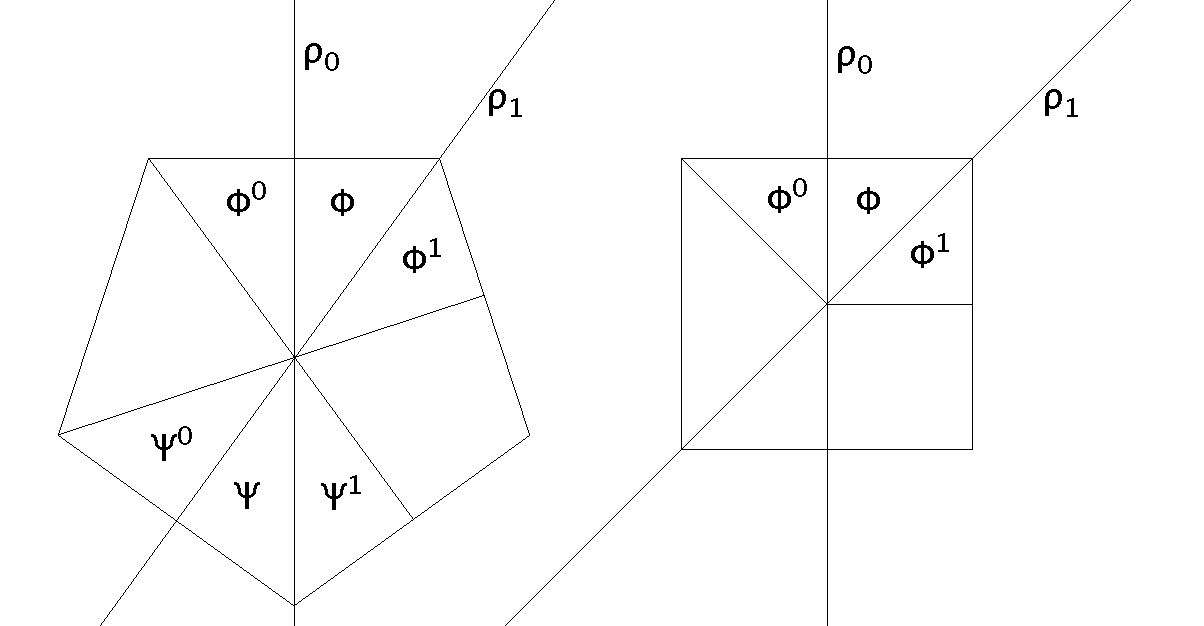} $$
\caption{Adjacent flags and dual flags in a pentagon and a square \label{figure-pgons}
}
\end{figure}

Next consider $\calP = \{3, 3\}$, the simplex. Since $\G(\calP) \cong S_4$, which has no nontrivial
outer automorphisms, it follows that $\calP$ is internally self-dual. Nevertheless, let us see
what the dual to the base flag looks like.  Consider the labeled simplex in Figure~\ref{LabeledSimplex}, with vertices $\{1,2,3,4\}$, edges $\{a,b,c,d,e,f\}$, and facets $\{L,F, R, D\}$.  Let us pick the triple $\Phi =(1,a,L)$ as the base flag.

\begin{figure}[h]
$$\includegraphics[scale=.5]{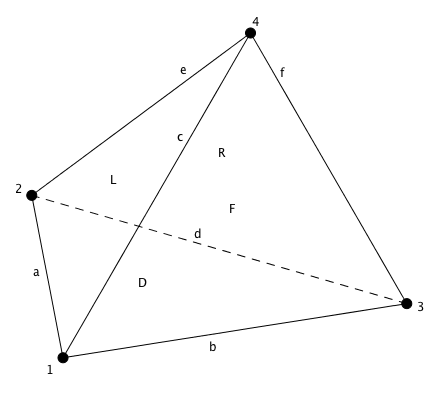}$$
\caption{A simplex with labeled faces \label{LabeledSimplex}}
\end{figure}

To find the dual of the base flag of this simplex consider the action of each of the distinguished generators of the automorphism group on the vertices, edges, and facets.  We summarize the actions in Table~\ref{table-simplex-actions}.

\begin{table}[h]
\begin{center}
\begin{tabular}{|c|c|c|c|}
\hline
 & Vertices & Edges & Facets \\ \hline
 $\rho_0$ & $(1,2)(3)(4)$ & $(a)(b,d)(c,e)(f)$ & $(L)(F,R)(D)$ \\ \hline
 $\rho_1$ & $(1)(2,4)(3)$ & $(a,c)(b)(d,f)(e)$ & $(L)(F,D)(R)$ \\ \hline 
 $\rho_2$ & $(1)(2)(3,4)$ & $(a)(b,c)(d,e)(f)$ & $(L,D)(F)(R)$ \\ \hline
 \end{tabular}
  \end{center}
  \caption{Symmetries of the labeled simplex \label{table-simplex-actions}}
 \end{table}

Since $\rho_1$ and $\rho_2$ both fix the base vertex, we need for their duals, $\rho_1$ and $\rho_0$, to fix
the vertex of the dual flag. The only possibility is vertex 3. Next, since $\rho_0$ and $\rho_2$ fix the base
edge, we need for their duals to fix the edge of the dual flag. The only edges fixed by both $\rho_0$ and
$\rho_2$ are edges $a$ and $f$, and the only one of those incident on vertex 3 is $f$. Finally, since
$\rho_0$ and $\rho_1$ fix the base facet, we need for $\rho_2$ and $\rho_1$ to fix the facet of the dual flag,
and so the only possibility is $R$. So the flag that is dual to $(1, a, L)$ is $(3, f, R)$ (shown in Figure~\ref{LabeledSimplexDual}).
 
\comment{
	First let us determine the vertex of the flag $\Psi$ which is dual to our base flag $\Phi$.   Since $\rho_0$ has our base vertex in its support, $\rho_2$ must have the vertex of the dual flag in its support.  Thus the vertex of the dual flag must be either vertex 3 or 4.  Furthermore, $\rho_1$ fixes our base vertex, so $\rho_1$ must also fix the vertex of the dual flag.  We can conclude that the vertex of $\Psi$ is vertex 3.

	Next, we can determine the face of $\Psi$ in a similar manner.  Since $\rho_0$ fixes the base face $L$, $\rho_2$ must fix the face of $\Psi$.  Thus the face of $\Psi$ is either $F$ or $R$.  Similarly, $\rho_1$ fixes the base face $L$, and so $\rho_1$ must fix the face of $\Psi$.  We can conclude that the  face of $\Psi$ is $R$.

	There are two edges ($d$ and $f$) incident to both face $R$ and vertex 3.  All that is left is to determine which of them must be the edge of $\Psi$ and we have determined the flag dual to our base flag.  Since $\rho_0$ fixes our base edge, $\rho_2$ must fix the edge of $\Psi$.  Finally, we conclude that the edge of $\Psi$ is $f$, and in summary $\Psi = (3, f, R)$ as shown below.
}

\begin{figure}[h]
$$\includegraphics[scale=.5]{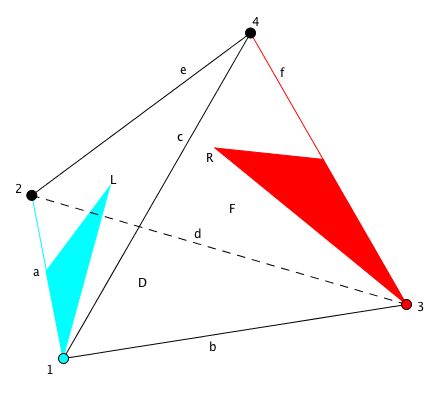}$$
\caption{A base and dual flag of a simplex \label{LabeledSimplexDual}}
\end{figure}

	The process just described is a good illustration of the general process of finding
	a dual flag. Let us now describe that process. 
	Suppose $\calP$ is a regular, self-dual polytope, with base flag $\Phi$ and
	$\G(\calP) = \langle \rho_0, \ldots, \rho_{n-1} \rangle$. 
	A flag $\Psi$ will be dual to $\Phi$ if and only if, for $0 \leq i \leq n-1$, the 
	$i$-face of $\Psi$ is fixed by $\langle \rho_j \mid j \neq n-i-1 \rangle$. 
	So to find $\Psi$, we start by looking for a vertex $F_0$ that is fixed by $\langle \rho_0, \ldots, \rho_{n-2} \rangle$.
	If no such vertex exists, then $\Phi$ does not have a dual flag. Otherwise, once we have found
	$F_0$, we now need an edge $F_1$ that is incident to $F_0$ and 
	fixed by $\langle \rho_0, \ldots, \rho_{n-3}, \rho_{n-1} \rangle$.
	Since $\rho_{n-1}$ does not fix $F_0$, the only way that it will fix $F_1$ is if it interchanges
	the endpoints. Thus, $F_1$ must be incident on the vertices $F_0$ and $F_0 \rho_{n-1}$. 
	Similar reasoning then shows that $F_2$ must be incident on all of the edges in
	$F_1 \langle \rho_{n-2}, \rho_{n-1} \rangle$. Continuing in this way, we arrive at the following
	algorithm:
	\comment{
		We note that,
		if $\varphi \in \langle \rho_0, \ldots, \rho_{n-3}, \rho_{n-1} \rangle$, then $\varphi$ either
		fixes both $F_0$ and $F_0 \rho_{n-1}$, or it interchanges them. If $F_1$ is the only edge
		incident to $F_0$ and $F_0 \rho_{n-1}$, then it follows that $\varphi$ fixes it.
		Otherwise, in principle, $\varphi$ might exchange $F_1$ with another edge incident to
		$F_0$ and $F_0 \rho_{n-1}$.
	}
	\begin{algorithm}
	\label{dual-flag-proc}
	Suppose $\calP$ is a regular $n$-polytope with base flag $\Phi$ and $\G(\calP) = \langle \rho_0, \ldots, \rho_{n-1}
	\rangle$. We pick a flag $\Psi = (F_0, F_1, \ldots, F_{n-1})$ as follows.
	\begin{enumerate}
	\item[$1$.] Set $F_0$ to be a vertex of $\calP$ that is fixed by $\langle \rho_0, \ldots, \rho_{n-2} \rangle$.
	\item[$2$.] Once $F_i$ is chosen, pick $F_{i+1}$ to be an $(i+1)$-face that is incident to every
	$i$-face in $F_i \langle \rho_{n-i-1}, \ldots, \rho_{n-1} \rangle$.
	\item[$3$.] Repeat Step 2 until $F_{n-1}$ is chosen.
	\end{enumerate}
	\end{algorithm}
	
	For simplicity, let us assume for now that $\calP$ is \emph{vertex-describable}, meaning that each face
	of $\calP$ is uniquely determined by its vertex-set. 
	
	\begin{proposition}
	\label{find-dual-flag}
	Suppose $\calP$ is a regular, self-dual, vertex-describable polytope.
	Then $\calP$ is internally self-dual if and only if the procedure described in Algorithm~\ref{dual-flag-proc}
	finishes successfully.
	\end{proposition}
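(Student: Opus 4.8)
The plan is to combine Proposition~\ref{int-sd-iff-dual-flag}, which reduces internal self-duality to the existence of a flag $\Psi$ dual to the base flag $\Phi$, with the face-fixing reformulation recorded just before the algorithm: a flag $\Psi = (F_0, \ldots, F_{n-1})$ is dual to $\Phi$ exactly when each $F_i$ is fixed by $H_i := \langle \rho_j \mid j \neq n-i-1 \rangle$. This reformulation follows because $\Psi$ is dual iff $\Psi \rho_i = \Psi^{n-i-1}$ for every $i$ (the general word condition of Definition~\ref{def-dual-flag} reduces to the generators), and $\Psi \rho_i = \Psi^{n-i-1}$ says precisely that $\rho_i$ fixes every face of $\Psi$ other than its $(n-i-1)$-face. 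So it suffices to show that, under vertex-describability, Algorithm~\ref{dual-flag-proc} finishes successfully if and only if some flag has its $i$-face fixed by $H_i$ for every $i$.

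For the forward direction, assume $\calP$ is internally self-dual and let $\Psi=(F_0,\dots,F_{n-1})$ be a dual flag; I would check that its faces constitute valid choices at every stage of the algorithm. Step~1 holds since $F_0$ is fixed by $H_0=\langle\rho_0,\dots,\rho_{n-2}\rangle$. For Step~2, every generator $\rho_k$ with $k\ge n-i-1$ satisfies $k\neq n-i-2$, hence lies in $H_{i+1}$ and fixes $F_{i+1}$; therefore the whole orbit $O_i:=F_i\langle\rho_{n-i-1},\dots,\rho_{n-1}\rangle$ consists of $i$-faces incident to the single face $F_{i+1}$, so $F_{i+1}$ is an admissible choice. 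Thus the procedure can be carried to completion.

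For the converse, suppose the algorithm produces $\Psi=(F_0,\dots,F_{n-1})$; I would show by induction on $i$ that $F_i$ is fixed by $H_i$ and that its vertex set equals $V_i:=F_0\langle\rho_{n-i},\dots,\rho_{n-1}\rangle$, so that $\Psi$ is a dual flag and Proposition~\ref{int-sd-iff-dual-flag} applies. The base case is Step~1. The inductive step rests on two structural facts: (i) any ``low'' generator $\rho_j$ with $j\le n-i-3$ satisfies $|j-k|\ge 2$ for each ``high'' index $k\ge n-i-1$, so by the string relations $\rho_j$ commutes with $\rho_{n-i-1},\dots,\rho_{n-1}$, and moreover $\rho_j$ fixes both $F_0$ and $F_i$; and (ii) the high generators permute $O_i$ and $V_{i+1}=F_0\langle\rho_{n-i-1},\dots,\rho_{n-1}\rangle$ among themselves. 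Together these give that $H_{i+1}$ fixes the orbit $O_i$ setwise and the set $V_{i+1}$ setwise (the low generators fixing $V_{i+1}$ pointwise). Since $F_{i+1}$ was chosen incident to all of $O_i$, its vertex set contains the union of the vertex sets of the members of $O_i$, which by the inductive hypothesis equals $V_{i+1}$; once this containment is known to be an equality, vertex-describability upgrades ``$H_{i+1}$ fixes $V_{i+1}$ setwise'' to ``$H_{i+1}$ fixes $F_{i+1}$,'' closing the induction.

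The hard part is exactly that equality, i.e. the uniqueness of the $(i+1)$-face lying above all of $O_i$. The algorithm only demands incidence with every member of $O_i$, which a priori permits extra vertices, and because $H_{i+1}$ fixes $O_i$ setwise, every $H_{i+1}$-translate of $F_{i+1}$ is again incident to all of $O_i$; so without a uniqueness statement one cannot conclude that $F_{i+1}$ is fixed. I would close this gap using vertex-describability together with the diamond condition and strong connectivity inside the section $F_{i+1}/F_{-1}$: the members of $O_i$ exhaust the $i$-faces (facets) of that section, so their vertices cover all of the vertices of $F_{i+1}$, forcing the vertex set to equal $V_{i+1}$ and hence (by vertex-describability) $F_{i+1}$ to be the unique face above $O_i$. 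Establishing this exhaustion/uniqueness is the main obstacle; the remaining group-theoretic bookkeeping is routine.
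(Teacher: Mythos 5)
Your argument follows the paper's proof almost step for step: the forward direction (a dual flag supplies a valid choice at every stage because $F_i\varphi \le F_{i+1}\varphi = F_{i+1}$ for $\varphi$ in the relevant parabolic) and the converse skeleton (identify the vertex set of $F_{i+1}$ with $V_{i+1} = F_0\langle\rho_{n-i-1},\ldots,\rho_{n-1}\rangle$, check that the low generators fix it pointwise by commutation and the high ones setwise, then invoke vertex-describability) are exactly what appears in the paper. The one place you diverge is that you isolate, as ``the hard part,'' the equality $\mathrm{vert}(F_{i+1}) = V_{i+1}$, whereas the paper simply asserts this ``by construction.'' Your concern is legitimate --- the algorithm only forces the containment $V_{i+1}\subseteq \mathrm{vert}(F_{i+1})$ --- and your proposed facet-exhaustion route does close it; here is how to finish it. By induction $F_i$ is fixed by $H_i = \langle\rho_j \mid j\ne n-i-1\rangle$, and $\mathrm{Stab}_{\G}(F_i)$ is conjugate to $\langle\rho_j\mid j\ne i\rangle$, which the duality automorphism of $\G$ carries onto $H_i$; comparing orders gives $\mathrm{Stab}_{\G}(F_i) = H_i$ exactly. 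The intersection property (Equation~\ref{intprop}) then yields $\mathrm{Stab}_{\G}(F_i)\cap\langle\rho_{n-i-1},\ldots,\rho_{n-1}\rangle = \langle\rho_{n-i},\ldots,\rho_{n-1}\rangle$, so $|O_i| = [\langle\rho_{n-i-1},\ldots,\rho_{n-1}\rangle : \langle\rho_{n-i},\ldots,\rho_{n-1}\rangle]$, which under the duality automorphism equals $[\langle\rho_0,\ldots,\rho_i\rangle : \langle\rho_0,\ldots,\rho_{i-1}\rangle]$, i.e.\ the number of $i$-faces of any $(i+1)$-face. Since every member of $O_i$ lies below $F_{i+1}$, the orbit $O_i$ is the full set of $i$-faces of the section $F_{i+1}/F_{-1}$, every vertex of $F_{i+1}$ lies on one of them, and the reverse containment $\mathrm{vert}(F_{i+1})\subseteq V_{i+1}$ follows. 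With that supplied, your proof is complete and is, in substance, the paper's proof with the elided step made explicit.
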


	\begin{proof}
	If $\calP$ is internally self-dual, then its base flag $\Phi$ has a dual flag $\Psi$. Let
	$\Psi = (F_0, \ldots, F_{n-1})$. Then for each $i$, the face $F_i$ is fixed by $\langle \rho_j \mid j \neq n-i-1 \rangle$.
	In particular, since $F_i < F_{i+1}$, it follows that each $\varphi \in \langle \rho_{n-i-1}, \ldots, \rho_{n-1} \rangle$, we have $F_i \varphi < F_{i+1} \varphi = F_{i+1}$. Thus, once we have chosen $F_i$,
	the face $F_{i+1}$ satisfies the desired properties, and we can continue the process until we
	have produced $\Psi$.
	
	Conversely, suppose that the algorithm produced a sequence of faces $(F_0, \ldots, F_{n-1})$. 
	To show that $\calP$ is internally self-dual, it suffices to show that $\rho_j$ fixes every face except for $F_{n-j-1}$. 
	By construction, the vertex-set of each face $F_i$ is $F_0 \langle \rho_{n-i}, \ldots, \rho_{n-1} \rangle$.
	For $j < n-i-1$, the automorphism $\rho_j$ commutes with $\langle \rho_{n-i}, \ldots, \rho_{n-1} \rangle$ and fixes
	$F_0$, and so
	\[ F_0 \langle \rho_{n-i}, \ldots, \rho_{n-1} \rangle \rho_j = F_0 \rho_j \langle \rho_{n-i}, \ldots, \rho_{n-1} \rangle = F_0 \langle \rho_{n-i}, \ldots, \rho_{n-1} \rangle. \]
	For $j > n-i-1$, we have
	\[ F_0 \langle \rho_{n-i}, \ldots, \rho_{n-1} \rangle \rho_j.  = F_0 \langle \rho_{n-i}, \ldots, \rho_{n-1} \rangle. \]
	So for $j \neq n-i-1$, the automorphism $\rho_j$ fixes the vertex set of $F_i$. Since $\calP$ is vertex-describable,
	it follows that $\rho_j$ fixes $F_i$ itself. 
	Thus $\rho_j$ fixes every face except (possibly) for $F_{n-j-1}$.
	But it is clear that $\rho_j$ cannot also fix $F_{n-j-1}$, because the only automorphism
	that fixes any flag is the identity. This shows that $\Psi = (F_0, \ldots, F_{n-1})$ is
	dual to $\Phi$.
	\end{proof}
	
	\begin{corollary}
	\label{fixed-vertices}
	Let $\calP$ be a self-dual regular polytope with $\G(\calP) = \langle \rho_0, \ldots, \rho_{n-1} \rangle$.
	If no vertices are fixed by the subgroup $\langle \rho_0, \ldots, \rho_{n-2} \rangle$,
	then $\calP$ is externally self-dual.
	\end{corollary}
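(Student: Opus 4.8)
The plan is to deduce this directly from the characterization of internal self-duality in terms of dual flags. By Proposition~\ref{int-sd-iff-dual-flag}, $\calP$ is internally self-dual precisely when its base flag $\Phi$ has a dual flag $\Psi$; so I would prove the contrapositive by showing that, under the stated hypothesis, no flag can be dual to $\Phi$. The whole argument hinges on the single constraint that a dual flag places on its vertex, which is exactly the content of the first step of Algorithm~\ref{dual-flag-proc}. Crucially, I would work at the level of flags and faces rather than invoking Proposition~\ref{find-dual-flag}, since that result assumes vertex-describability, whereas the present corollary makes no such assumption.

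First I would recall the observation following Definition~\ref{def-dual-flag}: if $\Psi$ is dual to $\Phi$, then $\Psi \rho_i = \Psi^{n-i-1}$ for each $i$. This says that $\rho_i$ alters only the $(n-i-1)$-face of $\Psi$ and fixes all of its other faces. Now I would focus on the vertex ($0$-face) $F_0$ of such a $\Psi$. As $i$ ranges over $0, 1, \ldots, n-2$, the index $n-i-1$ ranges over $1, 2, \ldots, n-1$ and is never $0$; hence each of $\rho_0, \ldots, \rho_{n-2}$ fixes $F_0$. Therefore $F_0$ is a vertex fixed by the entire subgroup $\langle \rho_0, \ldots, \rho_{n-2} \rangle$.

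This is the crux: the existence of a dual flag forces the existence of a vertex stabilized by $\langle \rho_0, \ldots, \rho_{n-2} \rangle$. Under the hypothesis that no such vertex exists, $\Phi$ can have no dual flag, and so by Proposition~\ref{int-sd-iff-dual-flag} the polytope $\calP$ is not internally self-dual, i.e.\ it is externally self-dual. I do not anticipate a genuine obstacle here; the only points requiring care are to use the flag-theoretic characterization directly, so as to avoid the vertex-describability hypothesis of Proposition~\ref{find-dual-flag}, and to note that only the ``forward'' implication relating dual flags to fixed vertices is needed, which follows immediately from the remark after Definition~\ref{def-dual-flag}.
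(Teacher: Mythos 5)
Your proposal is correct and matches the paper's (implicit) argument: the corollary is exactly the contrapositive of the first step of Algorithm~\ref{dual-flag-proc}, namely that the vertex of any flag dual to $\Phi$ must be fixed by $\langle \rho_0, \ldots, \rho_{n-2} \rangle$ since $\Psi\rho_i = \Psi^{n-i-1}$ alters only the $(n-i-1)$-face. Your care in bypassing Proposition~\ref{find-dual-flag} so as not to import the vertex-describability hypothesis is a sensible refinement, but the route is the same one the paper intends.
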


\subsection{Properties of dualizing automorphisms}

Now let us return to the algebraic point of view to determine some properties of dualizing automorphisms.

\begin{proposition}
\label{dualizing-self-dual}
If $\alpha \in \G(\calP)$ is dualizing, then $\alpha = \alpha^{*}$.
\end{proposition}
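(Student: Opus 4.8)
The plan is to exploit the defining relation of a dualizing automorphism in its global form, and then to feed $\alpha$ into that relation as an argument. By Definition~\ref{def-int-sd}, saying that $\alpha$ is dualizing is equivalent to the statement that $\alpha \varphi = \varphi^{*} \alpha$ for \emph{every} $\varphi \in \G(\calP)$. The crucial observation is that this identity is required to hold for all group elements, and $\alpha$ is itself such an element, so I am free to substitute $\varphi = \alpha$.

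Carrying this out, setting $\varphi = \alpha$ yields $\alpha \alpha = \alpha^{*} \alpha$, that is, $\alpha^{2} = \alpha^{*} \alpha$. Since $\G(\calP)$ is a group, I can then cancel the rightmost factor of $\alpha$ (multiply both sides on the right by $\alpha^{-1}$) to conclude $\alpha = \alpha^{*}$, which is exactly the claim.

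There is essentially no obstacle here beyond recognizing this self-substitution trick; the proof is a one-line cancellation. The only point worth checking is that $\alpha^{*}$ is even well defined, and it is: because $\calP$ is self-dual and regular, the map sending each $\rho_i$ to $\rho_{n-i-1}$ is a genuine group automorphism of $\G(\calP)$, so it may be applied to the element $\alpha$ just as to any other element of the group.
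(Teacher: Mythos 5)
Your proof is correct and is essentially identical to the paper's: both substitute $\varphi = \alpha$ into the defining relation $\alpha\varphi = \varphi^{*}\alpha$ and cancel $\alpha$ on the right. You simply spell out the cancellation step that the paper leaves implicit.
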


\begin{proof}
If $\alpha$ is dualizing, then for all $\varphi \in \G(\calP)$, we have that $\alpha \varphi = \varphi^{*} \alpha$. Taking $\varphi = \alpha$ yields the desired result.
\end{proof}
	
\begin{proposition}
If $\alpha \in \G(\calP)$ is dualizing, then $\alpha$ is not in $\langle \rho_0, \ldots, \rho_{n-2} \rangle$
or in $\langle \rho_1, \ldots, \rho_{n-1} \rangle$.
\end{proposition}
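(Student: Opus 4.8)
The plan is to argue by contradiction, using the rigidity of how a dualizing automorphism conjugates the generators. Rewriting the defining relation $\alpha \rho_i = \rho_{n-i-1} \alpha$ as $\alpha \rho_i \alpha^{-1} = \rho_{n-i-1}$, we see that conjugation by $\alpha$ permutes the generators according to $\rho_i \mapsto \rho_{n-i-1}$; in particular it interchanges $\rho_0$ and $\rho_{n-1}$. These are precisely the generators omitted from the two subgroups in the statement, and that is exactly what produces a contradiction. (Throughout we assume $n \geq 2$, so that $\rho_0$ and $\rho_{n-1}$ genuinely belong to the relevant subgroups.)

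Suppose first that $\alpha \in \langle \rho_0, \ldots, \rho_{n-2} \rangle$. Since $\rho_0$ also lies in this subgroup, so does the conjugate $\alpha \rho_0 \alpha^{-1}$; but $\alpha \rho_0 \alpha^{-1} = \rho_{n-1}$, and hence $\rho_{n-1} \in \langle \rho_0, \ldots, \rho_{n-2} \rangle$. Now apply the intersection property~\eqref{intprop} with $I = \{0, \ldots, n-2\}$ and $J = \{n-1\}$: it gives $\langle \rho_0, \ldots, \rho_{n-2} \rangle \cap \langle \rho_{n-1} \rangle = \{\eps\}$, and therefore $\rho_{n-1} = \eps$. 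This is impossible, since $\rho_{n-1}$ is an involution. The case $\alpha \in \langle \rho_1, \ldots, \rho_{n-1} \rangle$ is entirely symmetric: there $\alpha \rho_{n-1} \alpha^{-1} = \rho_0$ forces $\rho_0 \in \langle \rho_1, \ldots, \rho_{n-1} \rangle$, which the intersection property again rules out.

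There is no real obstacle in this argument; it reduces to the single observation that a dualizing $\alpha$ swaps $\rho_0$ and $\rho_{n-1}$ under conjugation, combined with one application of~\eqref{intprop}. For intuition, the same fact admits a geometric reading via Proposition~\ref{int-sd-iff-dual-flag}: the subgroup $\langle \rho_0, \ldots, \rho_{n-2} \rangle$ stabilizes the base facet $F_{n-1}$, so if $\alpha$ lay in it, then the dual flag $\Psi = \Phi \alpha$ would share this facet with $\Phi$; but a flag dual to $\Phi$ must have its facet fixed by $\langle \rho_1, \ldots, \rho_{n-1} \rangle$, whence the entire group $\G(\calP)$ would fix $F_{n-1}$, contradicting that $\rho_{n-1}$ moves it.
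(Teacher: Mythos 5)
Your proof is correct and is essentially the paper's own argument: a dualizing $\alpha$ conjugates $\rho_0$ to $\rho_{n-1}$, so membership of $\alpha$ in either parabolic subgroup would place the omitted generator inside it, contradicting the intersection property. The extra details you supply (the explicit choice of $I$ and $J$, and the concluding geometric remark) are fine but not a different route.
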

	
\begin{proof}
Since $\alpha$ is dualizing, $\rho_{n-1} =  \alpha^{-1} \rho_{0} \alpha$. If $\alpha \in
\langle \rho_0, \ldots, \rho_{n-2} \rangle$, then this gives us that $\rho_{n-1}$ is in $\langle \rho_0, \ldots,
\rho_{n-2} \rangle$, which violates the intersection condition (Equation~\ref{intprop}). Similarly, if $\alpha \in \langle \rho_1,
\ldots, \rho_{n-1} \rangle$, then the equation $\rho_0 = \alpha^{-1} \rho_{n-1} \alpha$ shows that
$\rho_0$ is in $\langle \rho_1, \ldots, \rho_{n-1} \rangle$, which again violates the intersection
condition.
\end{proof}

The following properties are straightforward to verify.

\begin{proposition}
\label{dualizing-props}
Let $\calP$ be a self-dual regular polytope.
\begin{enumerate} 
\item If $\alpha$ and $\beta$ are dualizing automorphisms, then $\alpha \beta$ is central, and $\alpha \beta = \beta \alpha$.
\item If $\varphi$ is central and $\alpha$ is dualizing, then $\varphi \alpha$ is dualizing.
\item If $\alpha$ is dualizing, then any even power of $\alpha$ is central, and any odd power of $\alpha$ is dualizing. In particular, $\alpha$ has even order.
\item If $\alpha$ is dualizing, then $\alpha^{-1}$ is dualizing.
\end{enumerate}
\end{proposition}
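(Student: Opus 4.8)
The plan is to reduce all four parts to a single observation: $\alpha$ is dualizing precisely when conjugation by $\alpha$ realizes the duality map, i.e.\ $\alpha \varphi = \varphi^* \alpha$ for every $\varphi \in \G(\calP)$, together with the fact that $\varphi \mapsto \varphi^*$ is an \emph{involutory} automorphism of $\G(\calP)$. It is an automorphism because it merely relabels the generators via $\rho_i \mapsto \rho_{n-i-1}$, and it squares to the identity because $(\rho_i^*)^* = \rho_i$, so $(\varphi^*)^* = \varphi$ for all $\varphi$. Once these two facts are in hand, each part is a short formal manipulation of the defining relation.

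For part (a), I would compute directly, for an arbitrary $\varphi \in \G(\calP)$,
\[ (\alpha\beta)\varphi = \alpha(\varphi^*\beta) = (\alpha\varphi^*)\beta = (\varphi^*)^*\alpha\beta = \varphi(\alpha\beta), \]
using that $\beta$ and then $\alpha$ are dualizing and that $*$ is an involution; hence $\alpha\beta$ is central. Since a central element commutes with $\beta^{-1}$, conjugating $\alpha\beta$ by $\beta^{-1}$ gives $\beta\alpha = \alpha\beta$. Part (b) is the same style of one-line check: if $\varphi$ is central and $\alpha$ is dualizing, then $(\varphi\alpha)\psi = \varphi\psi^*\alpha = \psi^*(\varphi\alpha)$ for all $\psi$ (the central $\varphi$ commutes with $\psi^*$), so $\varphi\alpha$ is dualizing.

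Part (c) I would obtain by induction from (a) and (b): $\alpha^2$ is central by (a) with $\beta = \alpha$, so $\alpha^{2k} = (\alpha^2)^k$ is central because the center is a subgroup, and then $\alpha^{2k+1} = \alpha^{2k}\alpha$ is dualizing by (b). The one genuine extra input is that the identity $\eps$ is \emph{not} dualizing: if it were, then $\rho_0 = \eps\,\rho_0\,\eps^{-1} = \rho_{n-1}$, contradicting the intersection condition (Equation~\ref{intprop}) for $n \geq 2$. Consequently the order of $\alpha$ cannot be odd, since an odd power equal to $\eps$ would make $\eps$ dualizing. For part (d), rather than invoking finiteness of the order, I would verify it directly: substituting $\varphi^*$ for $\varphi$ in $\alpha\varphi = \varphi^*\alpha$ yields $\alpha\varphi^* = \varphi\alpha$, and rearranging gives $\alpha^{-1}\varphi = \varphi^*\alpha^{-1}$, so $\alpha^{-1}$ is dualizing.

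I expect no serious obstacle, as these are formal consequences of the definition. The only point requiring care beyond group bookkeeping is confirming that $\eps$ is not dualizing in part (c); this is where the structure of string C-groups (the distinctness of $\rho_0$ and $\rho_{n-1}$) enters, while everything else is pure manipulation of the relation $\alpha\varphi = \varphi^*\alpha$ and the involutivity of $*$.
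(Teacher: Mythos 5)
Your proposal is correct and follows essentially the same route as the paper, which treats all four parts as formal consequences of the defining relation $\alpha\varphi = \varphi^*\alpha$ (checked on the generators $\rho_i$) together with the observation that $\eps$ cannot be dualizing since that would force $\rho_0 = \rho_{n-1}$. Your direct verification of (d) via the substitution $\varphi \mapsto \varphi^*$ is a small improvement over deducing it from (c), as it does not assume $\alpha$ has finite order, but the argument is otherwise the same.
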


\comment{
	\begin{proof}
	\begin{enumerate}
	\item For all $i$, we have
			\[ \rho_i \alpha \beta = \alpha \rho_{n-i-1} \beta = \alpha \beta \rho_i, \]
	so $\alpha \beta$ is central. Then it follows that $(\alpha \beta) \alpha^{-1} = \alpha^{-1} (\alpha \beta) = \beta$, 
	and so $\alpha \beta = \beta \alpha$.
	\item For all $i$, we have
			\[ \rho_i \varphi \alpha = \varphi \rho_i \alpha = \varphi \alpha \rho_{n-i-1}. \]
	\item Apply the previous two results, and note that $1$ cannot be dualizing, since that would imply that
	$\rho_0 = \rho_{n-1}$.
	\item Follows from the previous result.
	\end{enumerate}
	\end{proof}
}

\begin{proposition}
Let $\calP$ be an internally self-dual regular polytope such that $\G(\calP)$ has a finite center. 
Then the number of dualizing automorphisms of $\G(\calP)$ is equal to the order of the center of $\G(\calP)$.
\end{proposition}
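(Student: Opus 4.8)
The plan is to fix a single dualizing automorphism once and for all, and to exhibit an explicit bijection between the center of $\G(\calP)$ and the set of all dualizing automorphisms. Since $\calP$ is internally self-dual, by definition it has at least one dualizing automorphism; call it $\alpha$. Write $Z$ for the center of $\G(\calP)$, and let $D \subseteq \G(\calP)$ denote the set of all dualizing automorphisms. The whole argument rests on the four parts of Proposition~\ref{dualizing-props}, so no new structural input is needed.

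First I would define the map $\mu \colon Z \to \G(\calP)$ by $\mu(\varphi) = \varphi \alpha$ and check that its image lands in $D$. This is exactly part~(b) of Proposition~\ref{dualizing-props}: if $\varphi$ is central and $\alpha$ is dualizing, then $\varphi \alpha$ is dualizing. Thus $\mu$ restricts to a map $Z \to D$. Injectivity is immediate, since right multiplication by the fixed element $\alpha$ is a cancellable operation in the group: $\varphi_1 \alpha = \varphi_2 \alpha$ forces $\varphi_1 = \varphi_2$.

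Next I would prove surjectivity, which is where parts~(a) and~(d) come in. Given an arbitrary dualizing automorphism $\beta$, part~(d) tells us that $\alpha^{-1}$ is again dualizing, and then part~(a) tells us that the product of two dualizing automorphisms is central, so $\beta \alpha^{-1} \in Z$. Setting $\varphi = \beta \alpha^{-1}$, we have $\mu(\varphi) = \beta \alpha^{-1} \alpha = \beta$, so $\beta$ lies in the image. Hence $\mu$ is a bijection from $Z$ onto $D$, and therefore $|D| = |Z|$. Finally, the hypothesis that the center is finite guarantees that $|Z|$ is a finite number, so the count of dualizing automorphisms is finite and equals the order of the center, as claimed.

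I do not expect a genuine obstacle here; the content is a clean orbit-style bijection argument, and the only care required is bookkeeping: making sure each of the three properties (well-definedness, injectivity, surjectivity) is attributed to the correct part of Proposition~\ref{dualizing-props} and that $\alpha$ exists in the first place. The finiteness hypothesis plays only a cosmetic role, ensuring the phrase ``the number of dualizing automorphisms'' refers to an ordinary integer rather than merely a cardinality.
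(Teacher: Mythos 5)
Your argument is correct and is essentially the paper's proof made explicit: the paper observes that the central and dualizing automorphisms together form a group in which the center has index~2, so the dualizing automorphisms are exactly the coset $Z\alpha$, which is precisely the bijection $\varphi \mapsto \varphi\alpha$ you construct. No gap; your version just spells out the coset correspondence via parts (a), (b), and (d) of Proposition~\ref{dualizing-props}.
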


\begin{proof}
Proposition \ref{dualizing-props} implies that the central and dualizing automorphisms of $\G(\calP)$
together form a group in which the center of $\G(\calP)$ has index 2. The result follows immediately.
\end{proof}

\subsection{Internal self-duality of nonregular polytopes}

Generalizing internal self-duality to nonregular polytopes is not entirely straightforward.
When $\calP$ is a self-dual regular polytope, then $\G(\calP)$ always has an automorphism
(inner or outer) that reflects this self-duality. This is not the case for general polytopes.

One promising way to generalize internal self-duality is using the notion of dual flags.
Indeed, Definition~\ref{def-dual-flag} does not require the polytope to be regular, and
makes sense even for nonregular polytopes. Let us determine some of the simple
consequences of this definition.

\begin{proposition}
Suppose that $\Psi$ is dual to $\Phi$. Then $\Psi^{n-i-1}$ is dual to $\Phi^i$.
\end{proposition}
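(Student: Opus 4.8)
The plan is to verify Definition~\ref{def-dual-flag} directly for the pair $(\Phi^i,\Psi^{m})$, where I abbreviate $m = n-i-1$, using as the sole input the hypothesis that $\Psi$ is dual to $\Phi$. So I fix an automorphism $\varphi \in \G(\calP)$ and an arbitrary sequence $w$ with $\Phi^i \varphi = (\Phi^i)^{w}$, and I must deduce that $\Psi^{m} \varphi = (\Psi^{m})^{w^*}$. The one genuine subtlety is that the word $w$ recording how $\varphi$ moves $\Phi^i$ is highly non-unique, so the argument cannot get away with choosing a single convenient representative: it has to succeed for every admissible $w$ at once.

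The device that handles this is to transport the situation back to the base flag $\Phi$, where the hypothesis actually lives. Using the identity $\Xi^u \varphi = (\Xi \varphi)^u$ (valid for any flag $\Xi$) with $u = (i)$, I rewrite the left-hand side as $(\Phi \varphi)^i$, so that $(\Phi \varphi)^i = (\Phi^i)^w = \Phi^{(i,w)}$. Applying the $i$-adjacency once more to both sides and cancelling via $\Xi^{ii} = \Xi$ collapses this to $\Phi \varphi = \Phi^{(i,w,i)}$. Thus an arbitrary word $w$ describing the motion of $\Phi^i$ produces a concrete word $(i,w,i)$ describing the motion of $\Phi$ itself --- and crucially this holds whatever $w$ was.

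Now the hypothesis applies: since $\Psi$ is dual to $\Phi$ and $\Phi \varphi = \Phi^{(i,w,i)}$, Definition~\ref{def-dual-flag} yields $\Psi \varphi = \Psi^{(i,w,i)^*}$. As $(\,\cdot\,)^*$ acts letterwise, $(i,w,i)^* = (m, w^*, m)$. Translating back to $\Psi^m$ by the same identity, $\Psi^m \varphi = (\Psi \varphi)^m = \bigl(\Psi^{(m,w^*,m)}\bigr)^m = \Psi^{(m,w^*)}$, the trailing pair of $m$'s cancelling by $\Xi^{mm} = \Xi$; and $\Psi^{(m,w^*)}$ is precisely $(\Psi^m)^{w^*}$. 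Since $w$ was arbitrary, $\Psi^{n-i-1}$ is dual to $\Phi^i$. The main obstacle is exactly the non-uniqueness flagged above, and the ``prepend-and-append $i$, then cancel squares'' maneuver is what makes the reduction to the hypothesis legitimate for all $w$ simultaneously; the remainder is routine bookkeeping with the identities $\Xi^u \varphi = (\Xi\varphi)^u$, $\Xi^{ii}=\Xi$, and the letterwise action of $*$.
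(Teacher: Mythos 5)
Your proof is correct and is essentially the same computation as the paper's: both rest on conjugating the word by $i$ (which dualizes to conjugation by $n-i-1$) together with the identities $\Xi^u\varphi=(\Xi\varphi)^u$ and $\Xi^{ii}=\Xi$. The only difference is direction of bookkeeping --- you start from an arbitrary word moving $\Phi^i$ and transport it back to $\Phi$, while the paper starts from a word moving $\Phi$ and pushes it forward to $\Phi^i$ --- and your version handles the ``for every admissible $w$'' quantifier a bit more explicitly.
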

	
\begin{proof}
Let $\varphi \in \G(\calP)$. We need to show that $\varphi$ acts dually on $\Phi^i$ and
$\Psi^{n-i-1}$. Suppose that $\Phi \varphi = \Phi^{w}$, from which it follows that
$\Psi \varphi = \Psi^{w^{*}}$. Then:
\[ (\Phi^i) \varphi = (\Phi \varphi)^i = \Phi^{w i} = (\Phi^i)^{i w i}, \]
whereas
\[ (\Psi^{n-i-1} \varphi) = (\Psi \varphi)^{n-i-1} = \Psi^{w^{*}(n-i-1)} = 
(\Psi^{n-i-1})^{(n-i-1)w^{*}(n-i-1)}, \]
and the claim follows.
\end{proof}

Since polytopes are flag-connected, the following is an immediate consequence.

\begin{corollary}
If $\calP$ has one flag that has a dual, then every flag has a dual.
\end{corollary}

Thus we see that the existence of dual flags is in fact a global property, not a local one. Here are some further consequences of the definition of dual flags.

\begin{proposition}
Let $\calP$ be a polytope, and let $\Phi$ and $\Psi$ be flags of $\calP$.
\begin{enumerate}
\item If $\Phi$ and $\Psi$ are dual, then $\calP$ is self-dual, and there is a duality $d: \calP \to \calP$ that
takes $\Phi$ to $\Psi$.
\item $\Phi$ is dual to $\Psi$ if and only if $\Psi = \Phi d$ for some duality $d$ that commutes
	with every automorphism of $\calP$.
\item $\Phi$ is dual to $\Psi$ if and only if $\Psi = \Phi d$ for some duality $d$ that 
	is central in the extended automorphism group of $\calP$.
\end{enumerate}
\end{proposition}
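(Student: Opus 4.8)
The plan is to establish (a) by constructing a duality directly on the flag set, and then to read off (b) and (c) from (a) using two standard facts: that $\G(\calP)$ acts freely on flags (only the identity automorphism fixes a flag), and that composing two dualities yields an automorphism while composing a duality with an automorphism yields a duality.

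For (a), assume $\Phi$ and $\Psi$ are dual. Since $\calP$ is flag-connected, every flag has the form $\Phi^w$ for some finite sequence $w$, so I define a map $d$ on flags by $\Phi^w d := \Psi^{w^*}$. The heart of the proof --- and the step I expect to be the main obstacle --- is well-definedness, which is exactly the assertion that $\calP$ is self-dual at all. Suppose $\Phi^w=\Phi^{w'}$. Since each $i$-adjacency is an involution, the sequence $v$ formed by concatenating $w$ with the reverse of $w'$ satisfies $\Phi^v=\Phi$. Applying the defining condition of dual flags to the \emph{identity} automorphism $\eps$ (for which $\Phi\eps=\Phi^v$) gives $\Psi^{v^*}=\Psi\eps=\Psi$. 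As $(\cdot)^*$ distributes over concatenation and commutes with reversal, $v^*$ is $w^*$ concatenated with the reverse of $(w')^*$, so $\Psi^{v^*}=\Psi$ says precisely $\Psi^{w^*}=\Psi^{(w')^*}$. Thus $d$ is well defined; the point is that the troublesome consistency condition is nothing but the identity instance of the hypothesis, which is why regularity is not needed.

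Because the dual-flag relation is symmetric, the same recipe defines $\Psi^u\mapsto\Phi^{u^*}$, a two-sided inverse of $d$ (using $w^{**}=w$), so $d$ is a bijection. For every $i$ and $\Theta=\Phi^w$,
\[ (\Theta^i)d=\Phi^{wi}d=\Psi^{(wi)^*}=\Psi^{w^*(n-i-1)}=(\Psi^{w^*})^{n-i-1}=(\Theta d)^{n-i-1}. \]
A bijection of the flag set with $(\Theta^i)d=(\Theta d)^{n-i-1}$ for all $i$ is the flag action of a duality of $\calP$ (the flag description of dualities, \cite{arp}); since $\Phi d=\Psi$, this gives (a).

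For (b), suppose first that $\Phi,\Psi$ are dual and let $d$ be the duality from (a). Given $\varphi\in\G(\calP)$, pick $w$ with $\Phi\varphi=\Phi^w$; then $\Phi d\varphi=\Psi\varphi=\Psi^{w^*}$ while $\Phi\varphi d=\Phi^w d=(\Phi d)^{w^*}=\Psi^{w^*}$, so $d\varphi$ and $\varphi d$ agree on $\Phi$. Hence $d\varphi d^{-1}\varphi^{-1}$ is an automorphism fixing $\Phi$, so it is $\eps$ and $d$ commutes with $\varphi$. Conversely, if $\Psi=\Phi d$ for a duality $d$ commuting with all automorphisms, then $\Psi\varphi=\Phi d\varphi=\Phi\varphi d=\Phi^w d=\Psi^{w^*}$, so $\Phi,\Psi$ are dual. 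Finally, (c) follows from (b): if $d$ commutes with every automorphism then it commutes with every duality as well, since for dualities $d,d'$ the product $dd'$ is an automorphism and $d(dd')=(dd')d$ forces $dd'=d'd$; thus $d$ is central in the extended automorphism group. As a central duality in particular commutes with every automorphism, the equivalence in (c) is precisely the one proved in (b), with ``commutes with every automorphism'' replaced by the equivalent ``is central in the extended automorphism group.''
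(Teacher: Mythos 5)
Your proof is correct and follows essentially the same route as the paper: define $d$ on flags by $\Phi^w d = \Psi^{w^*}$, reduce well-definedness to the identity instance of the dual-flag condition, deduce commutation with automorphisms from free-ness of the flag action, and get centrality in the extended group by observing that a product of two dualities is an automorphism. The extra details you supply (bijectivity of $d$ and the two-word form of well-definedness) are harmless elaborations of what the paper leaves implicit.
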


\begin{proof}
\begin{enumerate}
\item We attempt to define the duality $d$ by $\Phi d = \Psi$ and then extend it by $\Phi^{w} d = 
\Psi^{w^{*}}$. To check that this is well-defined, suppose that $\Phi^w = \Phi$; we then need to show
that $\Psi^{w^*} = \Psi$. Indeed, if $\Phi^w = \Phi$, then taking $\varphi$ to be the identity we have
that $\Phi \varphi = \Phi^w$, whence $\Psi = \Psi \varphi = \Psi^{w*}$, with the last equality following
since $\Phi$ and $\Psi$ are dual.

\item 	Suppose $\Phi$ and $\Psi$ are dual. Then the first part shows that
	$\Psi = \Phi d$ for some duality $d$. Now, let $\varphi$ be an automorphism, and suppose
	$\Phi \varphi = \Phi^w$. We get:
	\[ \Phi \varphi d = (\Phi^w) d = (\Phi d)^{w^{*}} = \Psi^{w^{*}} = \Psi \varphi = \Phi d \varphi. \]
	Thus the automorphisms $d^{-1} \varphi d$ and $\varphi$ both act the same way on $\Phi$, and so they
	are equal. So $\varphi d = d \varphi$, and $d$ commutes with every automorphism.
	Conversely, if $d$ commutes with every automorphism, then
	\[ \Psi \varphi = \Phi d \varphi = \Phi \varphi d = (\Phi^w) d = (\Phi d)^{w^{*}} = \Psi^{w^{*}}, \]
	and $\Psi$ is dual to $\Phi$.
\item In light of the previous part, all that remains to show is that, if a duality commutes with every automorphism
of $\calP$, then it also commutes with every duality. If $d_2$ is a duality, then $d_2 d^{-1}$ is an automorphism, and
thus:
	\[ d d_2 d^{-1} = d (d_2 d^{-1}) = (d_2 d^{-1}) d = d_2, \]
	and so $d$ commutes with $d_2$
\end{enumerate}
\end{proof}

We see that dual flags have several nice properties, even in the nonregular case. 
We could define a polytope to be internally self-dual if every flag has a dual (equivalently,
if any single flag has a dual). It is not entirely clear if this is the ``right'' definition.
In any case, we do not pursue the nonregular case any further here.

\section{Properties of internally self-dual polytopes \label{Structural}}

\subsection{Basic structural results}

	Internally self-dual polytopes have a number of structural restrictions. Many of them are consequences
	of the following simple property.
			
	\begin{proposition}
	\label{int-self-dual-covers}
	If $\calP$ is an internally self-dual regular polytope, then any regular polytope covered
	by $\calP$ is also internally self-dual.
	\end{proposition}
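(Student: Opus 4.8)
The plan is to transport the dualizing automorphism of $\calP$ across the covering homomorphism. Since $\calP$ is internally self-dual, Definition~\ref{def-int-sd} supplies a dualizing automorphism $\alpha \in \G(\calP)$, so that $\alpha \rho_i = \rho_{n-i-1} \alpha$ for $0 \le i \le n-1$. By the definition of covering there is a surjective homomorphism $\pi \colon \G(\calP) \to \G(\calQ)$ that respects the canonical generators, i.e.\ $\pi(\rho_i) = \rho_i'$, where $\G(\calQ) = \langle \rho_0', \ldots, \rho_{n-1}' \rangle$. The natural candidate for a dualizing automorphism of $\calQ$ is then $\alpha' := \pi(\alpha)$.

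The key step is simply to apply $\pi$ to the defining relations of $\alpha$. Because $\pi$ is a homomorphism sending $\rho_j \mapsto \rho_j'$ for each $j$, applying it to $\alpha \rho_i = \rho_{n-i-1}\alpha$ yields
\[ \alpha' \rho_i' = \rho_{n-i-1}' \alpha' \qquad (0 \le i \le n-1). \]
This one computation already does everything. It shows that conjugation by $\alpha'$ sends each $\rho_i'$ to $\rho_{n-i-1}'$, so the map $\rho_i' \mapsto \rho_{n-i-1}'$ is realized by an inner automorphism of $\G(\calQ)$. In particular that map is a genuine group automorphism, which is exactly the statement that $\calQ$ is self-dual; and since $\alpha'$ satisfies the dualizing relations, $\alpha'$ is a dualizing automorphism and $\calQ$ is internally self-dual.

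I expect no serious obstacle here; the whole content is the functoriality of the relation $\alpha \rho_i = \rho_{n-i-1}\alpha$ under $\pi$. The only point worth a remark is that the definition of internal self-duality presupposes self-duality, so one should note that self-duality of $\calQ$ is not an extra hypothesis to be checked separately but is produced for free: exhibiting $\alpha'$ with $\alpha' \rho_i' = \rho_{n-i-1}'\alpha'$ simultaneously furnishes the duality-inducing automorphism and certifies that it is inner. (One might worry that $\pi$ could collapse $\alpha$ to the identity, but $\alpha' = \eps$ would force $\rho_0' = \rho_{n-1}'$, which is ruled out by the intersection property~\ref{intprop} in the string C-group $\G(\calQ)$; in any case the argument above never uses $\alpha' \neq \eps$.)
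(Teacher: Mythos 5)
Your proof is correct and is essentially identical to the paper's: both push the dualizing automorphism $\alpha$ through the covering homomorphism and observe that the relations $\alpha\rho_i = \rho_{n-i-1}\alpha$ are preserved, so the image is dualizing in $\G(\calQ)$. Your added remarks (self-duality of $\calQ$ coming for free, and the non-issue of $\pi$ possibly collapsing $\alpha$) are accurate but not needed.
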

			
	\begin{proof}
	Since $\calP$ is internally self-dual, there is an automorphism $\alpha \in \G(\calP)$ such that
	$\alpha \rho_i = \rho_{n-i-1} \alpha$ for $0 \leq i \leq n-1$. If $\calP$ covers $\calQ$, then the image of
	$\alpha$ in $\G(\calQ)$ has the same property, and so $\calQ$ is internally self-dual.
	\end{proof}
		
	Proposition~\ref{int-self-dual-covers} makes it difficult to construct internally self-dual
	polytopes with mixing, since the mix of any two polytopes must cover them both.
	The next proposition characterizes which pairs of polytopes can be mixed
	to create an internally self-dual polytope.
		
	\begin{proposition}
	\label{int-self-dual-mix}
	Let $\calP$ and $\calQ$ be regular $n$-polytopes such that $\calP \mix \calQ$ is polytopal. Then $\calP \mix \calQ$ is internally self-dual if and only if 
	\begin{enumerate}
	\item $\calP$ and $\calQ$ are internally self-dual, and
	\item There are dualizing automorphisms $\alpha \in \G(\calP)$ and $\beta \in \G(\calQ)$ such that the images of $\alpha$ and $\beta$ in $\G(\calP) \comix \G(\calQ)$ coincide.
	\end{enumerate}
	\end{proposition}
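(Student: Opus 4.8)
The plan is to identify the mix group $M = \G(\calP) \mix \G(\calQ)$ with a fiber product over the comix and then to read off its dualizing automorphisms one coordinate at a time. Write $C = \G(\calP) \comix \G(\calQ)$, and let $\pi_P \colon \G(\calP) \to C$ and $\pi_Q \colon \G(\calQ) \to C$ be the canonical epimorphisms carrying $\rho_i$ and $\rho_i'$ to the common generator $\bar\rho_i$ of $C$. The technical heart of the argument is the identity
\[ M = \{(\alpha, \beta) \in \G(\calP) \times \G(\calQ) : \pi_P(\alpha) = \pi_Q(\beta)\}. \]
The inclusion ``$\subseteq$'' is immediate, since the two homomorphisms $M \to C$ sending $(\alpha,\beta)$ to $\pi_P(\alpha)$ and to $\pi_Q(\beta)$ agree on each generator $(\rho_i,\rho_i')$ and hence on all of $M$. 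For ``$\supseteq$'' I would apply Goursat's lemma: $M$ surjects onto each factor, so it is the fiber product determined by the isomorphism between $\G(\calP)/N_1$ and $\G(\calQ)/N_2$, where $N_1 = \{\alpha : (\alpha, 1) \in M\}$. Unwinding the definition of $M$ as the group generated by the pairs $(\rho_i, \rho_i')$ shows that $(\alpha,1) \in M$ exactly when $\alpha$ is the value in $\G(\calP)$ of a relator of $\G(\calQ)$; thus $N_1$ is the normal closure in $\G(\calP)$ of the relators of $\G(\calQ)$, which is precisely $\ker \pi_P$ and gives $\G(\calP)/N_1 \cong C$. Matching the Goursat isomorphism with the natural one identifies $M$ with the displayed fiber product.

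Granting the lemma, the equivalence follows from a coordinatewise computation. Because $M \leq \G(\calP) \times \G(\calQ)$, an element $\gamma = (\alpha,\beta) \in M$ satisfies $\gamma(\rho_i,\rho_i') = (\rho_{n-i-1},\rho_{n-i-1}')\gamma$ for all $i$ if and only if $\alpha\rho_i = \rho_{n-i-1}\alpha$ and $\beta\rho_i' = \rho_{n-i-1}'\beta$ for all $i$; that is, $\gamma$ is dualizing in $M$ exactly when $\alpha$ and $\beta$ are dualizing in their respective groups. For the forward direction, if $\calP \mix \calQ$ is internally self-dual with dualizing automorphism $\gamma = (\alpha,\beta)$, then $\alpha$ and $\beta$ are dualizing, so $\calP$ and $\calQ$ are internally self-dual, which is (a); and since $\gamma \in M$ the lemma gives $\pi_P(\alpha) = \pi_Q(\beta)$, which is (b). Conversely, given (a) and (b), choose dualizing $\alpha \in \G(\calP)$ and $\beta \in \G(\calQ)$ with coinciding images in $C$; the lemma puts $(\alpha,\beta)$ into $M$, and the coordinatewise criterion makes it dualizing. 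Since $\calP$ and $\calQ$ are self-dual the mix is self-dual, so the existence of this dualizing automorphism shows $\calP \mix \calQ$ is internally self-dual.

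The one genuinely delicate point is the ``$\supseteq$'' half of the fiber product lemma: I must verify that the common quotient manufactured by Goursat's lemma really is the comix, presented on the images of the $\rho_i$, rather than some proper further quotient of it. Once the kernel computation $N_1 = \ker\pi_P$ is pinned down, the remaining steps --- the coordinatewise description of dualizing elements and the assembly of the two implications --- are routine.
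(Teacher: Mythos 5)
Your proof is correct and follows the same route as the paper: characterize dualizing elements of the mix coordinatewise, then reduce internal self-duality of $\calP \mix \calQ$ to whether a pair of dualizing automorphisms lies in the mix, which is controlled by their images in the comix. The only difference is that the paper cites \cite[Prop.~3.7]{var-gps} for the fiber-product description of the mix over the comix, whereas you prove it directly via Goursat's lemma, and your kernel computation $N_1 = \ker\pi_P$ correctly pins down the delicate point.
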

			
	\begin{proof}
	Let $\G(\calP) = \langle \rho_0, \ldots, \rho_{n-1} \rangle$ and $\G(\calQ) = \langle \rho_0', \ldots, \rho_{n-1}' 
	\rangle$, and let $\G(\calP) \mix \G(\calQ) = \langle \s_0, \ldots, \s_{n-1} \rangle$, where $\s_i = (\rho_i, \rho_i')$. 
	Let $\varphi = (\alpha, \beta) \in \G(\calP) \mix \G(\calQ)$.
	Then $\varphi$ is dualizing if and only if $\alpha$ and $\beta$ are both dualizing, since
	$\varphi \s_i = \s_{n-i-1} \varphi$ if and only if $(\alpha \rho_i, \beta \rho_i') = (\rho_{n-i-1} \alpha,
	\rho_{n-i-1}' \beta)$. Therefore, $\calP \mix \calQ$ is internally self-dual if and only if there are dualizing 
	automorphisms $\alpha \in \G(\calP)$ and $\beta \in \G(\calQ)$ such that $(\alpha, \beta) \in \G(\calP) \mix \G(\calQ)$. 
	By \cite[Prop. 3.7]{var-gps}, this occurs if and only if the images of $\alpha$ and $\beta$ in $\G(\calP) \comix
	\G(\calQ)$ coincide.
	\end{proof}

	\begin{corollary}
	If $\calP$ and $\calQ$ are internally self-dual regular $n$-polytopes with ``the same'' dualizing
	element (i.e., the image of the same word in the free group on $\rho_0, \ldots, \rho_{n-1}$),
	then $\calP \mix \calQ$ is internally self-dual.
	\end{corollary}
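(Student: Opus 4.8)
The plan is to read this off from Proposition~\ref{int-self-dual-mix}. By hypothesis there is a word $w$ in the free group on the symbols for $\rho_0, \ldots, \rho_{n-1}$ such that its image $\alpha \in \G(\calP)$ is a dualizing automorphism and its image $\beta \in \G(\calQ)$, obtained by reading the same word in $\rho_0', \ldots, \rho_{n-1}'$, is also dualizing. Condition~(a) of the proposition is then precisely the hypothesis that $\calP$ and $\calQ$ are internally self-dual, so the whole task is to check condition~(b): that $\alpha$ and $\beta$ have the same image in $\G(\calP) \comix \G(\calQ)$.

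The key step is that this coincidence is automatic. The canonical maps $\G(\calP) \to \G(\calP) \comix \G(\calQ)$ and $\G(\calQ) \to \G(\calP) \comix \G(\calQ)$ send the $i$-th generator on each side to the same generator of the comix. Hence the image of $\alpha$ is $w$ evaluated at the generators of the comix, and the image of $\beta$ is the same word evaluated at the same generators; the two images are literally equal. Condition~(b) therefore holds, and the proposition yields that $\calP \mix \calQ$ is internally self-dual. One can also verify this without routing through the proposition: writing $\s_i = (\rho_i, \rho_i')$ and $\varphi = w(\s_0, \ldots, \s_{n-1}) = (\alpha, \beta) \in \G(\calP) \mix \G(\calQ)$, a one-line computation from $\alpha \rho_i = \rho_{n-i-1}\alpha$ and $\beta \rho_i' = \rho_{n-i-1}'\beta$ gives $\varphi \s_i = \s_{n-i-1}\varphi$ for every $i$, so $\varphi$ dualizes the mix directly.

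There is no genuine obstacle in the argument itself; the one point to keep in mind is that calling $\calP \mix \calQ$ internally self-dual presupposes it is a polytope, i.e.\ that $\G(\calP) \mix \G(\calQ)$ is a string C-group. This polytopality is already built into the hypothesis of Proposition~\ref{int-self-dual-mix} and should be understood here as well; the dualizing element $\varphi$ is produced regardless of polytopality, so the substance of the corollary is that whenever the mix is polytopal, sharing a dualizing word forces it to be internally self-dual.
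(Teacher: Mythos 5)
Your proof is correct and follows exactly the route the paper intends: the corollary is stated as an immediate consequence of Proposition~\ref{int-self-dual-mix}, with condition (b) holding automatically because the same word evaluated at the generators of $\G(\calP)$ and of $\G(\calQ)$ maps to the same element of $\G(\calP) \comix \G(\calQ)$. Your remark about polytopality of the mix being an implicit hypothesis is also the right reading of the statement.
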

	
	\comment{
		\begin{corollary}
		Let $\calP$ and $\calQ$ be internally self-dual regular $n$-polytopes such that
		$\calP \mix \calQ$ is polytopal. If $\calP$ and $\calQ$ have trivial comix, then
		$\calP \mix \calQ$ is internally self-dual.
		\end{corollary}
				
		\begin{proof}
		Apply Proposition~\ref{int-self-dual-mix}, noting that if the comix is trivial, then the condition
		(that the images of the dualizing automorphisms coincide) is trivially satisfied.
		\end{proof}
	}

	\comment{
		Note that we did not have to \emph{assume} $\calQ$ to be self-dual; that is simply a consequence of the result. 

		Actually, a stronger result holds: every quotient of $\G(\calP)$ must still have a dualizing automorphism.
	}

	If we have a presentation for $\G(\calP)$, it is often simple to show that $\calP$ is not internally
	self-dual. Indeed, because of Proposition~\ref{int-self-dual-covers}, all we need to do is find some non-self-dual
	quotient of $\G(\calP)$. For example, if $\calP$ is internally self-dual, then adding a relation that forces
	$\rho_0 = \eps$ must also force $\rho_{n-1} = \eps$. Here are some similar results that are easily applied.
	
	\begin{proposition}
	If $\calP$ is an internally self-dual regular $n$-polytope, then in the abelianization of
	$\G(\calP)$, the image of each $\rho_i$ is equal to the image of $\rho_{n-i-1}$.
	\end{proposition}
	
	\begin{proof}
	By Proposition~\ref{int-self-dual-covers}, the abelianization of $\G(\calP)$ must have a dualizing automorphism.
	Since such an automorphism also must commute with the image of every $\rho_i$, it follows that the images of
	$\rho_i$ and $\rho_{n-i-1}$ must coincide.
	\end{proof}
	
	Suppose $\calP$ is a regular $n$-polytope, with $m$-faces isomorphic to $\calK$. 
	We say that a regular $n$-polytope has the \emph{Flat Amalgamation Property (FAP) with respect to its $m$-faces} if
	adding the relations $\rho_i = \eps$ to $\G(\calP)$ for $i \geq m$ yields a presentation for $\G(\calK)$
	(rather than a proper quotient).
	
	\begin{proposition}
	\label{no-fap}
	If $\calP$ has the FAP with respect to its $m$-faces for some $m$ with $1 \leq m \leq n-1$, then
	$\calP$ is not internally self-dual.
	\end{proposition}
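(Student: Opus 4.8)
The plan is to argue by contradiction, exploiting a basic asymmetry: the FAP relations kill the top generators $\rho_m, \ldots, \rho_{n-1}$ while retaining $\rho_0, \ldots, \rho_{m-1}$, whereas a dualizing automorphism ties $\rho_0$ to $\rho_{n-1}$. So I would suppose $\calP$ is internally self-dual and fix a dualizing automorphism $\alpha \in \G(\calP)$, so that $\alpha \rho_i \alpha^{-1} = \rho_{n-i-1}$ for all $i$ (Definition~\ref{def-int-sd}).

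I would then pass to the quotient dictated by the FAP. Let $N$ be the normal closure of $\{\rho_m, \ldots, \rho_{n-1}\}$ in $\G(\calP)$ and $\pi \colon \G(\calP) \to \G(\calP)/N$ the quotient map. By hypothesis the FAP identifies $\G(\calP)/N$ with $\G(\calK)$, where $\calK$ denotes the $m$-face, so that $\pi(\rho_0), \ldots, \pi(\rho_{m-1})$ are the distinguished generators of $\G(\calK)$ while $\pi(\rho_i) = \eps$ for $i \geq m$. Pushing the single relation $\alpha \rho_0 \alpha^{-1} = \rho_{n-1}$ through $\pi$ --- which is legitimate since any homomorphism preserves conjugacy relations --- gives $\pi(\alpha)\, \pi(\rho_0)\, \pi(\alpha)^{-1} = \pi(\rho_{n-1})$. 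As $m \leq n-1$, the index $n-1$ lies in the killed range, so $\pi(\rho_{n-1}) = \eps$ and therefore $\pi(\rho_0) = \eps$ in $\G(\calK)$.

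This is the contradiction: $\calK$ is a genuine $m$-polytope with $m \geq 1$, so its generator $\rho_0$ is a nontrivial involution and $\pi(\rho_0) \neq \eps$. Hence no dualizing automorphism can exist and $\calP$ is not internally self-dual. I anticipate no serious obstacle; the one point meriting care is that the argument must be carried out at the level of groups rather than polytopes. The quotient $\G(\calP)/N$ is \emph{not} the automorphism group of an $n$-polytope (its top generators are trivial), so the inheritance of internal self-duality by covered polytopes in Proposition~\ref{int-self-dual-covers} does not literally apply; the direct descent of the conjugacy relation sidesteps this, and it is exactly the hypothesis $m \leq n-1$ that does the real work, placing $\rho_{n-1}$ among the killed generators while leaving $\rho_0$ alive.
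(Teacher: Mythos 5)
Your proof is correct and is essentially the paper's own argument, just written out in more detail: the paper's one-line proof likewise observes that a dualizing automorphism forces $\rho_0 = \eps$ in any quotient where $\rho_{n-1} = \eps$, which is incompatible with the FAP for $m \geq 1$. Your explicit handling of the quotient at the level of groups (rather than polytopes) is a sensible precaution but does not change the substance.
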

	
	\begin{proof}
	If $\calP$ is internally self-dual, then adding the relation $\rho_{n-1} = \eps$ to $\G(\calP)$ must
	force $\rho_0 = \eps$, and this precludes $\calP$ from having the FAP with respect to its $m$-faces
	for any $m \geq 1$.
	\end{proof}
	
	\begin{corollary} \label{mix-with-edge}
	If $\calP$ is internally self-dual and $e$ is the unique $1$-polytope, then $\G(\calP \mix e ) \cong \G(\calP) \times C_2$.
	\end{corollary}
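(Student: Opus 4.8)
The plan is to realize $\G(\calP \mix e)$ as a subgroup of $\G(\calP) \times C_2$ and show it fills the whole direct product, the point being that the comix $\G(\calP) \comix \G(e)$ collapses to the trivial group. Write $\G(\calP) = \langle \rho_0, \ldots, \rho_{n-1}\rangle$ and $\G(e) = C_2 = \langle \tau \rangle$. Since $e$ has rank $1$, when we mix it with the rank-$n$ polytope $\calP$ its single involution pairs with $\rho_0$ while the remaining generators are padded by the identity, so that $\G(\calP) \mix \G(e) = \langle \s_0, \ldots, \s_{n-1}\rangle$ with $\s_0 = (\rho_0, \tau)$ and $\s_i = (\rho_i, \eps)$ for $1 \leq i \leq n-1$. (The degenerate case $\calP = e$, i.e.\ $n = 1$, must be excluded, since there $\calP \mix e = e$; I assume $n \geq 2$.) This subgroup surjects onto $\G(\calP)$ in the first coordinate and onto $C_2$ in the second, so it equals all of $\G(\calP) \times C_2$ precisely when $\G(\calP) \comix \G(e)$ is trivial: this is the order relation underlying \cite[Prop. 3.7]{var-gps}, and one argues exactly as in the proof of Proposition~\ref{int-self-dual-mix}.

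Next I would compute the comix. By the recipe recalled in Section~\ref{Background}, a presentation for $\G(\calP) \comix \G(e)$ comes from that of $\G(\calP)$ by adjoining the relations of the (rank-$n$ padded) edge group, namely $\rho_i = \eps$ for $1 \leq i \leq n-1$. Thus the comix is the quotient of $\G(\calP)$ in which $\ol{\rho_1}, \ldots, \ol{\rho_{n-1}}$ are all trivial; in particular $\ol{\rho_{n-1}} = \eps$. Here internal self-duality does the work: there is a dualizing automorphism $\alpha \in \G(\calP)$ with $\alpha \rho_{n-1} \alpha^{-1} = \rho_0$, and conjugation by $\alpha$ descends to any quotient, so $\ol{\rho_0} = \ol{\alpha}\, \ol{\rho_{n-1}}\, \ol{\alpha}^{-1} = \eps$ as well. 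Hence every generator dies, the comix is trivial, and $\G(\calP \mix e) \cong \G(\calP) \times C_2$.

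As a cross-check one can argue directly via Goursat's lemma: the subgroup $\langle \s_0, \ldots, \s_{n-1}\rangle$ fails to be the full product only if it is the graph of a surjection $f : \G(\calP) \to C_2$ with $f(\rho_0) = \tau$ and $f(\rho_i) = \eps$ for $i \geq 1$; but $\rho_0$ and $\rho_{n-1}$ are conjugate in $\G(\calP)$ and $C_2$ is abelian, forcing $f(\rho_0) = f(\rho_{n-1}) = \eps$, a contradiction. I expect the only genuine obstacle to be the bookkeeping of the mix for polytopes of different ranks --- that is, justifying that the edge contributes its involution only to the $\rho_0$ slot. An ``all-ones'' pairing would instead produce the graph of the parity homomorphism and yield $\G(\calP)$ rather than $\G(\calP) \times C_2$, as one already sees for the simplex, where every $\rho_i$ is a transposition. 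Once that identification is fixed the computation is short, and the conjugacy $\rho_0 \sim \rho_{n-1}$ furnished by internal self-duality is exactly what makes the comix collapse.
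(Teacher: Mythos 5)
Your proof is correct, and it reaches the conclusion by a self-contained route rather than the paper's citation-based one. The paper simply observes that $\G(\calP \mix e)$ must be either $\G(\calP)$ or $\G(\calP) \times C_2$ and then rules out the first possibility by combining Proposition~\ref{no-fap} with \cite[Thm. 7A11]{arp}, i.e.\ by leaning on the Flat Amalgamation Property machinery. You instead compute directly: the comix $\G(\calP) \comix \G(e)$ is the quotient of $\G(\calP)$ by the normal closure of $\rho_1, \ldots, \rho_{n-1}$, and the inner dualizing automorphism gives $\rho_0 = \alpha \rho_{n-1} \alpha^{-1}$, so killing $\rho_{n-1}$ kills $\rho_0$ and the comix is trivial; equivalently, your Goursat argument shows no homomorphism $\G(\calP) \to C_2$ can separate the conjugate elements $\rho_0$ and $\rho_{n-1}$. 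The underlying mechanism is the same in both cases --- the proof of Proposition~\ref{no-fap} in the paper rests on exactly this conjugacy --- but your version makes it explicit and avoids the external reference, at the cost of having to set up the mix/comix bookkeeping (which you do correctly, including the convention that the edge's involution is padded into the $\rho_0$ slot and the exclusion of the degenerate rank-$1$ case). Both arguments are valid; the paper's is shorter, yours is more transparent about why internal self-duality is the hypothesis that does the work.
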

	\begin{proof}  The group $\G(\calP \mix e )$ is either isomorphic to $\G(\calP)$ or $\G(\calP) \times C_2$.
	From Proposition~\ref{no-fap} and \cite[Thm. 7A11]{arp} we know that $\G(\calP \mix e ) \not \cong \G(\calP)$.	\end{proof}

\begin{corollary}\label{IntToExt}
If $\calP$ is internally self-dual and $e$ is the unique $1$-polytope, then $(\calP \mix e)^{*} \mix e$ is externally 
self-dual, with automorphism group $\G(\calP) \times C_2^2$.
\end{corollary}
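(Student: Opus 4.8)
The plan is to identify $\G\big((\calP\mix e)^{*}\mix e\big)$ explicitly as a subgroup of $\G(\calP)\times C_2\times C_2$, prove it is the full direct product, and then exhibit the self-duality and show it cannot be inner.

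Write $\G(\calP)=\langle\rho_0,\dots,\rho_{n-1}\rangle$ and $\G(e)=\langle\tau\rangle\cong C_2$. By Corollary~\ref{mix-with-edge}, $\G(\calP\mix e)\cong\G(\calP)\times C_2$; concretely this is $\langle\s_0,\dots,\s_{n-1}\rangle$ with $\s_i=(\rho_i,\eps)$ for $i<n-1$ and $\s_{n-1}=(\rho_{n-1},\tau)$, and the corollary guarantees that the central element $(\eps,\tau)$ actually lies in it. Dualizing reverses the generators, so $(\calP\mix e)^{*}$ is the same group with $i$-th generator $\s_{n-i-1}$, which pushes the coordinate carrying $\tau$ down to the bottom. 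Mixing with a fresh edge $\langle\tau'\rangle$ then attaches $\tau'$ to the top generator, so inside $\G(\calP)\times C_2\times C_2$ the group $N:=\G\big((\calP\mix e)^{*}\mix e\big)$ is generated by
\[
\mu_0=(\rho_{n-1},\tau,\eps),\qquad \mu_i=(\rho_{n-i-1},\eps,\eps)\ (1\le i\le n-2),\qquad \mu_{n-1}=(\rho_0,\eps,\tau').
\]

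First I would show $N$ is all of $\G(\calP)\times C_2\times C_2$. Its projection to the first factor is onto (the $\mu_i$ project to a reordering of the $\rho_j$), so $N$ equals the full product as soon as it contains the central subgroup $\{\eps\}\times C_2\times C_2$, i.e.\ the two involutions $(\eps,\tau,\eps)$ and $(\eps,\eps,\tau')$. A word in the $\mu_i$ whose $\G(\calP)$-part is trivial lands in this central $C_2\times C_2$, its two coordinates recording the parity of the number of $\mu_0$'s (equivalently of $\rho_{n-1}$'s) and of $\mu_{n-1}$'s (equivalently of $\rho_0$'s) used. Thus $(\eps,\tau,\eps)\in N$ exactly when $\G(\calP)$ admits a relation with an odd number of $\rho_{n-1}$'s and an even number of $\rho_0$'s, and $(\eps,\eps,\tau')$ under the mirror-image condition; by self-duality these come as a dual pair. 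Internal self-duality provides the relation: since $\calP$ fails the FAP with respect to its facets (Proposition~\ref{no-fap}), $\G(\calP)$ has a relation using $\rho_{n-1}$ an odd number of times, and when the outer Schl\"afli label $p_{n-2,n-1}$ is odd one may simply take the Coxeter relation $(\rho_{n-2}\rho_{n-1})^{p_{n-2,n-1}}=\eps$, which for $n\ge 3$ involves no $\rho_0$ at all. This is the step I expect to be the main obstacle, because separating the two parities is precisely what can break down: in rank $2$ the generators $\rho_0,\rho_{n-1}$ are adjacent and a self-dual polygon's single relation $(\rho_0\rho_1)^{p}$ with $p$ odd locks the two counts together, so $N$ collapses to an index-$2$ subgroup (e.g.\ $(\{5\}\mix e)^{*}\mix e\cong\{10\}$). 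The argument therefore genuinely requires $n\ge 3$, and in the case of an even outer label it needs a relation beyond the Coxeter ones, supplied by internal self-duality rather than mere self-duality.

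Granting $N=\G(\calP)\times C_2\times C_2$, the external self-duality is clean. Since $\calP$ is self-dual there is an automorphism $g\mapsto g^{*}$ of $\G(\calP)$ with $\rho_i^{*}=\rho_{n-i-1}$; define $\delta(g,x,y)=(g^{*},y,x)$ on $\G(\calP)\times C_2\times C_2$. A direct check shows $\delta$ is an automorphism with $\delta(\mu_i)=\mu_{n-i-1}$, so $\delta$ realizes the self-duality of $(\calP\mix e)^{*}\mix e$; in particular the polytope is self-dual. But $\delta$ interchanges the two central involutions $(\eps,\tau,\eps)$ and $(\eps,\eps,\tau')$, which are distinct nontrivial elements of the centre of $N$ (this is where fullness of the product is used, making $C_2\times C_2$ central). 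Since every inner automorphism fixes the centre pointwise, $\delta$ cannot be inner; hence no dualizing automorphism exists and $(\calP\mix e)^{*}\mix e$ is externally self-dual, with automorphism group $\G(\calP)\times C_2^2$.
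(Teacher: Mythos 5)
You have correctly reduced the whole corollary to the right combinatorial question, and your reduction agrees with what the paper's proof leaves implicit: writing $N=\G\bigl((\calP\mix e)^{*}\mix e\bigr)\le\G(\calP)\times C_2\times C_2$, one has $N=\G(\calP)\times C_2^2$ if and only if $\G(\calP)$ has a relation using $\rho_{n-1}$ an odd number of times and $\rho_0$ an even number of times (plus its dual). The gap is exactly where you predicted it: for $p_{n-1}$ even you assert that such a relation is ``supplied by internal self-duality,'' but it is not, and in fact it can fail to exist. Internal self-duality only forces $\rho_0$ and $\rho_{n-1}$ to have \emph{equal} images in the abelianization, which is precisely consistent with every relation using $\rho_0$ and $\rho_{n-1}$ with the same parity; in that situation $N$ is the index-$2$ subgroup of $\G(\calP)\times C_2\times C_2$ in which the product of the two $C_2$-coordinates is determined by the $\G(\calP)$-coordinate, so $N\cong\G(\calP)\times C_2$. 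This actually occurs: for $\calP=\{4,4\}_{(s,0)}$ with $s$ odd, which is internally self-dual by Proposition~\ref{torus-map}, the assignment $\rho_0\mapsto -1$, $\rho_1\mapsto 1$, $\rho_2\mapsto -1$ extends to a homomorphism $\G(\calP)\to\{\pm 1\}$ (check it on $\rho_i^2$, $(\rho_0\rho_1)^4$, $(\rho_1\rho_2)^4$, $(\rho_0\rho_2)^2$ and $(\rho_0\rho_1\rho_2\rho_1)^s$), so no relation separates the two parities. Thus the step cannot be repaired as stated, and the group identification fails for these polyhedra; the paper's own proof glosses over the same point when it jumps from $\G\bigl((\calP\mix e)^{*}\bigr)\cong\G(\calP)\times C_2$ to $\G\bigl((\calP\mix e)^{*}\mix e\bigr)\cong\G(\calP)\times C_2^2$.

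Your proof of externality inherits the problem, because it needs $(\eps,\tau,\eps)$ and $(\eps,\eps,\tau')$ to be two distinct central involutions \emph{inside} $N$ that the duality swaps; in the index-$2$ case neither lies in $N$ and the argument says nothing. The paper's argument for externality is different and survives: mixing $(\calP\mix e)^{*}\mix e$ with $e$ a second time cannot enlarge the group, since the new $C_2$-coordinate always agrees with the one already attached at that end, so by the contrapositive of Corollary~\ref{mix-with-edge} the polytope is not internally self-dual, and being self-dual it must be externally self-dual. That argument is independent of whether $N$ is the full product, and is what you should use for the externality claim.
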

	\begin{proof}
	In general, $(\calP \mix \calQ)^* \cong \calP^* \mix \calQ^*$, and so $(\calP \mix e)^{*} \mix e$ is self-dual. 
	Since $P$ is self-dual and $\G(\calP \mix e ) \cong \G(\calP) \times C_2$, the group $\G(\calP \mix e^* )$ is isomorphic to $ \G(\calP) \times C_2$ as well.  Thus $\G((\calP \mix e)^{*} \mix e) \cong \G(\calP) \times C^2_2$.
		 Furthermore, if you mix $(\calP \mix e)^{*} \mix e$ with an edge again, then the automorphism group does not change.  Thus, by Corollary~\ref{mix-with-edge}, $(\calP \mix e)^{*} \mix e$ must be externally self-dual.

	\comment{
		If $\calP$ is internally self-dual, then any central automorphisms must be self-dual, since dualizing
		automorphisms are supposed to dualize the central automorphisms. Thus, if we extend $\calP$ by two central
		involutions -- one at $\rho_0$ and one at $\rho_{n-1}$ -- and if we end up with a larger group
		as a result, then we have created non-self-dual central automorphisms, and so we have an externally self-dual polytope.
	}
\end{proof}

\begin{corollary}\label{even-type}
If $\calP$ is an internally self-dual polyhedron of type $\{p, p\}$, then $(\calP \mix e)^* \mix e$ is an externally
self-dual polyhedron of type $\{q, q\}$, where $q = \lcm(p, 2)$.
\end{corollary}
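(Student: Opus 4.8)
The plan is to pull everything except the Schläfli symbol straight out of Corollary~\ref{IntToExt}, and then obtain the type by tracking, through each of the three operations, the orders of the products of consecutive distinguished generators. Since $\calP$ is internally self-dual, Corollary~\ref{IntToExt} already guarantees that $(\calP \mix e)^* \mix e$ is a well-defined externally self-dual polyhedron with automorphism group $\G(\calP) \times C_2^2$. So the only thing left to establish is that its type is $\{q, q\}$ with $q = \lcm(p, 2)$.

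First I would record exactly what mixing with the edge does to the type. Writing $\G(\calP) = \langle \rho_0, \rho_1, \rho_2 \rangle$ and $\G(e) = \langle \rho_0' \rangle \cong C_2$, and padding the single generator of $e$ into position $0$ (so that positions $1$ and $2$ carry $\eps$), the mix $\calP \mix e$ is generated by $\sigma_i = (\rho_i, \rho_i')$. Then $\sigma_0 \sigma_1 = (\rho_0 \rho_1, \rho_0')$ has order $\lcm(p, 2) = q$, while $\sigma_1 \sigma_2 = (\rho_1 \rho_2, \eps)$ has order $p$, since the order of an element of a direct product is the lcm of the orders of its coordinates. Thus $\calP \mix e$ has type $\{q, p\}$. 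Dualizing reverses the Schläfli symbol, so $(\calP \mix e)^*$ has type $\{p, q\}$, and mixing with $e$ a second time replaces the first entry $p$ by $\lcm(p, 2) = q$ while leaving the second entry unchanged (as $\lcm(q, 1) = q$). Hence $(\calP \mix e)^* \mix e$ has type $\{q, q\}$, completing the argument. One can also carry out the computation in a single step by observing that, because $\calP$ is self-dual, $(\calP \mix e)^* \mix e$ is generated by involutions whose consecutive products have coordinates of orders $p$ and $2$ at \emph{both} positions, again giving period $\lcm(p,2) = q$ in each.

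The one delicate point — and the main thing to get right — is the padding convention for the rank-$1$ polytope $e$ when it is mixed with a rank-$3$ polytope: its generator must occupy position $0$ (with trivial generators elsewhere), so that $\calP \mix e$ alters only the first entry of the Schläfli symbol, and correspondingly $e^*$ places its generator at position $2$. Once this is fixed, each order computation is just an lcm in a direct product and the remainder is bookkeeping. It is worth noting that the conclusion holds for both parities of $p$: when $p$ is even we have $q = p$, so the resulting externally self-dual polyhedron retains the type $\{p,p\}$ of $\calP$ but acquires the strictly larger group $\G(\calP) \times C_2^2$.
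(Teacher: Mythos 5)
Your proof is correct and takes essentially the same approach as the paper: it invokes Corollary~\ref{IntToExt} for external self-duality and the group, then determines the type by computing the order of $\sigma_0\sigma_1 = (\rho_0\rho_1,\lambda_0)$ in the mix (the paper computes $(\sigma_0\sigma_1)^p$ directly rather than phrasing it as an lcm, but it is the same calculation), concluding that $\calP \mix e$ has type $\{q,p\}$ and that dualizing and mixing with $e$ again gives $\{q,q\}$.
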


\begin{proof}
Let $\G(\calP) = \langle \rho_0, \rho_1, \rho_2 \rangle$ and let $\G(e) = \langle \lambda_0 \rangle$.
For $0 \leq i \leq 2$, let $\s_i = (\rho_i, \lambda_i) \in \G(\calP) \times \G(e)$, where we take $\lambda_1 =
\lambda_2 = \eps$. Then $(\s_0 \s_1)^p = ((\rho_0 \rho_1)^p, \lambda_0^p) = (\eps, \lambda_0^p)$. If $p$
is even, then this gives us $(\eps, \eps)$, and so $\s_0 \s_1$ has order $p$. Otherwise $\s_0 \s_1$ has order
$2p$. So $\calP \mix e$ is of type $\{q, p\}$, and by taking the dual and mixing with $e$ again, we
get a polyhedron of type $\{q, q\}$.
\end{proof}
	
In some sense, Corollary~\ref{IntToExt} says that externally self-dual polytopes are at least as common as internally self-dual
polytopes.	

\subsection{Internal self-duality of universal polytopes}

	A natural place to start looking for internally self-dual polytopes is the universal
	polytopes $\{p_1, \ldots, p_{n-1}\}$ whose automorphism groups are string Coxeter groups.
	Let us start with those polytopes with a 2 in their Schl\"afli symbol.
	Recall that a polytope is \emph{flat} if every vertex is incident on every facet.

	\begin{proposition}
	\label{no-flat}
	There are no flat, regular, internally self-dual polytopes.
	\end{proposition}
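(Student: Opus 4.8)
The plan is to derive a contradiction from the assumption that a flat, regular, internally self-dual polytope $\calP$ exists, by showing that flatness forces the base facet's stabilizer to fix no vertex, which by Corollary~\ref{fixed-vertices} makes $\calP$ externally self-dual.

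First I would unwind the definitions at the level of the group $\G(\calP) = \langle \rho_0, \ldots, \rho_{n-1} \rangle$. The base facet $\hat F_{n-1}$ is stabilized by the subgroup $\G_{n-1} := \langle \rho_0, \ldots, \rho_{n-2} \rangle$, which acts on the section $\hat F_{n-1}/F_{-1}$ as the automorphism group of that facet. Since $\calP$ is regular of rank $n \geq 2$, this facet is a regular $(n-1)$-polytope with at least two vertices, and $\G_{n-1}$ permutes those vertices transitively (regular polytopes are vertex-transitive). A transitive action on a set of size at least two has no global fixed point, so $\G_{n-1}$ fixes none of the vertices incident on $\hat F_{n-1}$.

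The key step is then to use flatness. By definition every vertex of $\calP$ is incident on every facet, so in particular every vertex is incident on the base facet $\hat F_{n-1}$; that is, the vertices of the facet section are \emph{all} of the vertices of $\calP$. Combined with the previous paragraph, this says that $\G_{n-1} = \langle \rho_0, \ldots, \rho_{n-2} \rangle$ fixes no vertex of $\calP$ whatsoever.

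Finally I would invoke Corollary~\ref{fixed-vertices}: a self-dual regular polytope none of whose vertices are fixed by $\langle \rho_0, \ldots, \rho_{n-2} \rangle$ must be externally self-dual. (The only input this needs is that an internally self-dual polytope has a dual flag whose $0$-face is fixed by $\langle \rho_0, \ldots, \rho_{n-2} \rangle$, which follows from Proposition~\ref{int-sd-iff-dual-flag} together with Definition~\ref{def-dual-flag}.) Since internal and external self-duality are mutually exclusive, this contradicts the assumption, proving that no flat, regular, internally self-dual polytope exists. I expect the only delicate points to be bookkeeping: confirming that the $\G_{n-1}$-action on the facet's vertices really is transitive, that a regular polytope of rank at least one has at least two vertices, and that the degenerate low ranks (where ``flat'' is vacuous or meaningless) are correctly excluded. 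An alternative route would be to show that flatness yields the Flat Amalgamation Property with respect to some $m$-faces and then quote Proposition~\ref{no-fap}, but the direct fixed-point argument above seems both shorter and more transparent.
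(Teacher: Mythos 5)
Your proof is correct and follows essentially the same route as the paper: the facet stabilizer $\langle \rho_0, \ldots, \rho_{n-2} \rangle$ acts transitively on the base facet's vertices, flatness makes those all the vertices of $\calP$, so no vertex is fixed and Corollary~\ref{fixed-vertices} applies. The extra care you take about the facet having at least two vertices and about low ranks is fine but does not change the argument.
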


	\begin{proof}
	Suppose $\calP$ is a flat regular polytope, with $\G(\calP) = \langle \rho_0, \ldots, \rho_{n-1} \rangle$.
	The stabilizer of the base facet is $\G_{n-1} = \langle \rho_0, \ldots, \rho_{n-2} \rangle$, which
	acts transitively on the vertices of the base facet. Since $\calP$ is flat, it follows that $\G_{n-1}$
	acts transitively on all the vertices of $\calP$. In particular, $\G_{n-1}$ does not fix any vertices,
	and thus Corollary~\ref{fixed-vertices} implies that $\calP$ is not internally self-dual.
	\end{proof}

	\begin{corollary}
	\label{no-degenerate}
	If $\calP$ is a regular internally self-dual polytope of type $\{p_1, \ldots, p_{n-1}\}$, then
	each $p_i$ is at least $3$.
	\end{corollary}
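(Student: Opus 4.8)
The plan is to show that a $2$ anywhere in the Schl\"afli symbol forces $\calP$ to be flat, and then to invoke Proposition~\ref{no-flat}. Recall that the entries of the type $\{p_1, \ldots, p_{n-1}\}$ satisfy $p_k \geq 2$ automatically, and that $p_k$ is the order of $\rho_{k-1}\rho_k$; so the content of the corollary is precisely to rule out $p_k = 2$ for every $k$ with $1 \leq k \leq n-1$. The hypothesis $p_k = 2$ is equivalent to saying that $\rho_{k-1}$ and $\rho_k$ commute, and the whole argument hinges on the fact that this single extra commutation, together with the string relations (Equation~\ref{string}), splits the generators into two mutually commuting blocks.

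First I would prove the block decomposition. Set $A := \langle \rho_0, \ldots, \rho_{k-1} \rangle$ and $B := \langle \rho_k, \ldots, \rho_{n-1} \rangle$. To see that every element of $A$ commutes with every element of $B$, it suffices to check the generators: for $a \leq k-1$ and $b \geq k$ we have $b - a \geq 1$, and if $b-a \geq 2$ then $\rho_a$ and $\rho_b$ commute by the string relations, while the one remaining case $a = k-1$, $b = k$ commutes by hypothesis. Since $A$ and $B$ together generate $\G(\calP)$ and meet only in the identity by the intersection property (Equation~\ref{intprop}), we conclude $\G(\calP) = A \times B$.

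Next I would translate this into flatness. Write $\G_0 = \langle \rho_1, \ldots, \rho_{n-1} \rangle$ for the vertex stabilizer and $\G_{n-1} = \langle \rho_0, \ldots, \rho_{n-2} \rangle$ for the facet stabilizer. The decomposition gives $\G_0 = \langle \rho_1, \ldots, \rho_{k-1} \rangle \times B$ and $\G_{n-1} = A \times \langle \rho_k, \ldots, \rho_{n-2} \rangle$ (with the inner factor understood to be trivial when $k = 1$ or $k = n-1$, respectively). Because $A$ and $B$ commute elementwise, multiplying these two subgroups and rearranging factors yields $\G_0 \G_{n-1} = A \times B = \G(\calP)$. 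Consequently the orbit of the base vertex under the facet stabilizer satisfies $F_0 \G_{n-1} = F_0 \G_0 \G_{n-1} = F_0 \G(\calP)$, which is the set of all vertices; since each vertex in $F_0 \G_{n-1}$ is incident on the base facet and $\calP$ is flag-transitive, every vertex is incident on every facet, so $\calP$ is flat. Proposition~\ref{no-flat} then gives that $\calP$ is not internally self-dual, so no $p_k$ equals $2$ and hence each $p_i \geq 3$.

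I expect the middle step to be the main obstacle, and it is really two things bundled together: verifying that the commuting-block generation is genuinely a \emph{direct} product (this is exactly where the intersection property is used) and confirming that the product $\G_0 \G_{n-1}$ fills out the entire group for a $2$ in \emph{any} position, including the boundary positions $k = 1$ and $k = n-1$ where one inner factor degenerates to the trivial group. Once $\G(\calP) = \G_0 \G_{n-1}$ is established, flatness and the appeal to Proposition~\ref{no-flat} are routine. As an alternative to naming flatness explicitly, one could observe that $\G_{n-1}$ then acts transitively on the (more than one) vertices of $\calP$ and so fixes none of them, which feeds directly into Corollary~\ref{fixed-vertices}.
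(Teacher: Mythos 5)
Your proof is correct and follows the same overall route as the paper's: reduce the corollary to Proposition~\ref{no-flat} by showing that a $2$ anywhere in the Schl\"afli symbol forces flatness. The only difference is that the paper disposes of the flatness step in one line by citing Proposition 2B16 of \cite{arp}, whereas you prove that fact from scratch via the commuting-block decomposition; your argument is complete, including the boundary cases $k=1$ and $k=n-1$. One small remark: the intersection property and the direct-product structure are not actually needed for your middle step --- the identity $\G_0\G_{n-1} = \G(\calP)$ already follows from the facts that $A$ and $B$ commute elementwise and jointly generate $\G(\calP)$, since then $\G_0\G_{n-1} \supseteq BA = AB = \G(\calP)$.
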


	\begin{proof}
	Proposition 2B16 in \cite{arp} proves that if some $p_i$ is 2, then $\calP$ is flat.
	\end{proof}

	Next, we can rule out infinite polytopes.
	
	\begin{proposition}
	\label{no-infinite-coxeter}
	If $\calP$ is a self-dual regular polytope such that $\G(\calP)$ is an infinite string
	Coxeter group, then $\calP$ is externally self-dual.
	\end{proposition}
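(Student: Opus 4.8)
The plan is to argue by contradiction using Tits' faithful geometric (reflection) representation of the Coxeter group, pinning any dualizing automorphism down to the linear map that reverses the string, and then excluding that possibility by a length count in the root system. Suppose $\calP$ is internally self-dual, so there is a dualizing automorphism $\alpha$ in $W := \G(\calP)$ with $\alpha \rho_i \alpha^{-1} = \rho_{n-i-1}$ for all $i$. By Corollary~\ref{no-degenerate} every entry of the Schl\"afli symbol is at least $3$, so the (string) Coxeter diagram is connected and $W$ is irreducible. I would then pass to the geometric representation $V$ with simple roots $e_0, \ldots, e_{n-1}$ and bilinear form $B(e_i, e_j) = -\cos(\pi/p_{ij})$; this representation is faithful, each $\rho_i$ acts as the reflection $s_i$ in $e_i$, and $W$ permutes the root system $\Phi$, with $\ell(w) = |\{\beta \in \Phi^+ : w\beta \in \Phi^-\}|$ for every $w \in W$.

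The key reduction is to show $\alpha = \pm S$, where $S \colon e_i \mapsto e_{n-i-1}$ is the linear involution realizing the duality. Conjugating a reflection sends $s_i$ to the reflection in $\alpha(e_i)$, so $\alpha s_i \alpha^{-1} = s_{n-i-1}$ forces $\alpha(e_i) = \epsilon_i e_{n-i-1}$ with $\epsilon_i \in \{\pm 1\}$. Self-duality gives $p_{ij} = p_{n-i-1,\,n-j-1}$, so $S$ is an isometry of $B$ with $S s_i S^{-1} = s_{n-i-1}$; hence $c := S^{-1}\alpha$ commutes with every generator $s_j$ and satisfies $c(e_i) = \epsilon_i e_i$. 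Writing out $c s_j = s_j c$ on $e_i$ and comparing $e_j$-coefficients yields $B(e_i,e_j)(\epsilon_i - \epsilon_j) = 0$, so $\epsilon_i = \epsilon_j$ whenever $i$ and $j$ are adjacent. Since the diagram is connected, all $\epsilon_i$ equal a common $\delta \in \{\pm 1\}$, and therefore $\alpha = \delta S$ as a linear map.

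I would finish by the dichotomy on $\delta$. If $\delta = 1$, then $\alpha$ permutes the simple roots, hence preserves $\Phi^+$, so $\ell(\alpha) = 0$ and $\alpha = \eps$; but then $\rho_0 = \rho_{n-1}$, which is impossible for $n \geq 2$. If $\delta = -1$, then $\alpha$ sends every positive root to a negative root, so $\ell(\alpha) = |\Phi^+|$; since $W$ is infinite, $|\Phi^+| = \infty$, contradicting $\ell(\alpha) < \infty$. Either way we reach a contradiction, so $\calP$ must be externally self-dual. The main obstacle — and the only place infiniteness is genuinely used — is the $\delta = -1$ branch, where the absence of a longest element supplies the contradiction; the principal technical step is the centralizer computation forcing $\alpha = \pm S$, which depends on $p_i \geq 3$ (Corollary~\ref{no-degenerate}) to keep the diagram connected. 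As an alternative I could instead quote that an infinite irreducible Coxeter group has trivial center, so that $\alpha$ is an involution by Proposition~\ref{dualizing-props}, together with the fact that diagram automorphisms of such groups are outer, but the self-contained root-system argument above seems cleaner.
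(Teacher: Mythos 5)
Your proof is correct, but it takes a genuinely different route from the paper: the paper disposes of this proposition in two lines by citing Lemma~2.14 of Franzsen's thesis \cite{cox-aut}, which states that an infinite Coxeter group with no finite irreducible components has no inner automorphism realizing a nontrivial diagram automorphism, whereas you reprove the relevant special case from scratch inside the Tits representation. Your argument is sound at each step: the identity $w s_\beta w^{-1} = s_{w\beta}$ together with faithfulness pins $\alpha(e_i)$ to $\pm e_{n-i-1}$; the centralizer computation correctly propagates the sign along edges of the diagram, and your appeal to Corollary~\ref{no-degenerate} is exactly what guarantees the diagram is connected so that a single $\delta$ emerges (and there is no circularity, since that corollary rests only on Corollary~\ref{fixed-vertices}); and the final dichotomy --- $\delta=+1$ forces $\ell(\alpha)=0$ hence $\rho_0=\rho_{n-1}$, while $\delta=-1$ forces $\ell(\alpha)=|\Phi^+|=\infty$ --- is the standard ``no longest element'' obstruction. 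What your version buys is self-containedness and, incidentally, a repair of a small gap in the citation-based proof: Franzsen's lemma as quoted requires the absence of finite irreducible components, a hypothesis that fails when some $p_i=2$ and that the paper does not explicitly discharge, whereas you handle it up front via Corollary~\ref{no-degenerate}. What the paper's approach buys is brevity and generality (Franzsen's lemma covers arbitrary diagram automorphisms, not just the string reversal). Your closing alternative --- trivial center plus outerness of diagram automorphisms --- is essentially the paper's route.
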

			
	\begin{proof}
	Lemma 2.14 in \cite{cox-aut} proves that an infinite Coxeter group with no finite irreducible
	components has no nontrivial inner automorphisms that realize any graph automorphisms of the
	Coxeter diagram. Since self-duality is a graph automorphism, it follows that $\calP$
	cannot be internally self-dual.
	\end{proof}

	We can now easily cover the remaining self dual string Coxeter groups.
	
	\begin{theorem}
	\label{universal}
	The only internally self-dual regular polytopes such that $\G(\calP)$ is a string Coxeter group are simplices and $p$-gons
	with $p$ odd.
	\end{theorem}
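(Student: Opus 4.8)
The plan is to reduce the problem to a short finite list of candidate groups and then dispatch each one. Since an internally self-dual polytope is in particular self-dual, its Schl\"afli symbol $\{p_1, \ldots, p_{n-1}\}$ must be palindromic, that is, $p_i = p_{n-i}$ for every $i$. By Corollary~\ref{no-degenerate}, every $p_i$ is at least $3$, so the Coxeter diagram of $\G(\calP)$ is a connected path and $\G(\calP)$ is therefore irreducible; moreover $\calP$ is then forced to be the universal polytope $\{p_1,\ldots,p_{n-1}\}$. By Proposition~\ref{no-infinite-coxeter}, an infinite string Coxeter group yields an externally self-dual polytope, so $\G(\calP)$ must in fact be finite. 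Thus I would reduce to inspecting the finite irreducible Coxeter groups whose diagram is a path.

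Next I would consult the classification of finite irreducible Coxeter groups: those with a linear (string) diagram are $A_n$, $B_n$, $F_4$, $H_3$, $H_4$, and the dihedral groups $I_2(p)$. Imposing that the label sequence be palindromic and that every label be at least $3$ eliminates $B_n$ (type $\{4,3,\ldots,3\}$), $H_3$ (type $\{5,3\}$), and $H_4$ (type $\{5,3,3\}$), leaving exactly three families: $A_n$ (the simplices, type $\{3,\ldots,3\}$), $F_4$ (the $24$-cell, type $\{3,4,3\}$), and $I_2(p)$ (the $p$-gons, type $\{p\}$ with $p \geq 3$). The simplices are internally self-dual by Proposition~\ref{symmetric}, since their type is neither $\{6,6\}$ nor $\{4,4,4\}$, so the one exceptional symmetric-group case never arises. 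The $p$-gons are settled by Proposition~\ref{p-gons}: internally self-dual precisely when $p$ is odd.

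The only remaining case, and the one point requiring a genuine computation, is the $24$-cell $\{3,4,3\}$; this is where I expect the real content to lie. The cleanest route is the abelianization criterion established above, which says that for an internally self-dual $n$-polytope the images of $\rho_i$ and $\rho_{n-i-1}$ must coincide in the abelianization of $\G(\calP)$. For $\G(\calP) = \langle \rho_0, \rho_1, \rho_2, \rho_3 \rangle$ of type $\{3,4,3\}$, the only odd-labeled edges are $\{\rho_0,\rho_1\}$ and $\{\rho_2,\rho_3\}$ (the middle label $4$ is even and imposes no relation after abelianizing), so the abelianization is $(\Z/2)^2$ in which $\ol{\rho_0} = \ol{\rho_1}$ and $\ol{\rho_2} = \ol{\rho_3}$ while these two classes are distinct. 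Internal self-duality would force $\ol{\rho_0} = \ol{\rho_3}$, equating the two independent generators, a contradiction; hence the $24$-cell is externally self-dual. (Equivalently, one could invoke Corollary~\ref{fixed-vertices} after checking that the octahedral facet stabilizer $\langle \rho_0, \rho_1, \rho_2 \rangle$ fixes no vertex, but the abelianization argument is shorter and avoids any explicit permutation computation.) With the $24$-cell excluded, the surviving internally self-dual examples are exactly the simplices and the odd $p$-gons, completing the classification.
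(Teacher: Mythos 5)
Your proposal is correct and follows the paper's reduction exactly: Corollary~\ref{no-degenerate} and Proposition~\ref{no-infinite-coxeter} cut the candidates down to simplices, $p$-gons, and the $24$-cell, and the first two families are dispatched by Propositions~\ref{symmetric} and~\ref{p-gons}, just as in the paper. The one place you genuinely diverge is the $24$-cell: the paper simply reports that $\{3,4,3\}$ was verified not to be internally self-dual using a computer algebra system, whereas you exclude it by hand via the paper's abelianization criterion. Since $\G(\calP)$ is here the full Coxeter group $[3,4,3]$, its abelianization is $(\Z/2\Z)^2$, with $\ol{\rho_0}=\ol{\rho_1}$ and $\ol{\rho_2}=\ol{\rho_3}$ lying in distinct classes (the odd bonds $\{0,1\}$ and $\{2,3\}$ are separated by the even label $4$), so the requirement $\ol{\rho_0}=\ol{\rho_3}$ fails. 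This is a clean, verifiable replacement for the computational step; note that it is available precisely because the polytope in question is universal, so the abelianization of $\G(\calP)$ is exactly that of the Coxeter group, whereas for a non-universal polytope of the same type the abelianization could collapse further and the argument would not automatically apply. The rest of your write-up (palindromicity of the Schl\"afli symbol, irreducibility following from Corollary~\ref{no-degenerate}, and the enumeration $A_n$, $B_n$, $F_4$, $H_3$, $H_4$, $I_2(p)$ of finite Coxeter groups with linear diagrams) simply makes explicit the case analysis that the paper leaves implicit, and it is all accurate.
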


	\begin{proof}
	In light of Corollary~\ref{no-degenerate} and Proposition~\ref{no-infinite-coxeter}, the only possibilities
	left to consider are simplices, polygons, and the 24-cell $\{3, 4, 3\}$. Propositions \ref{symmetric} and \ref{p-gons}
	 establish the claim for simplices and $p$-gons. Using a computer algebra system, we
	can verify that $\{3, 4, 3\}$ is not internally self-dual.
	\end{proof}

\comment{
	\subsection{Central automorphisms}

	\begin{definition} For a polytope $\calP$, let the \emph{extended center} $\ol{Z(\calP)}$  be the
	automorphisms of $\calP$ that are central or dualizing. 
	\end{definition}

	\begin{proposition}
	Suppose $\calP$ is a nondegenerate self-dual regular polytope of odd rank $2k+1$. For $0 \leq i \leq k-1$, define
	$C_i = \Cent(\rho_i \rho_{2k-i})$ and $N_i = \Norm(\langle \rho_i, \rho_{2k-i} \rangle)$.
	Define $C_k = \Cent(\rho_k)$. Then 
		\[ \ol{Z(\calP)} = C_1 \cap N_1 \cap \cdots \cap C_{k-1} \cap N_{k-1} \cap C_k. \]
	\end{proposition}
			
	\begin{proof}
	First, suppose that $\varphi \in \ol{Z}(\calP)$. If $\varphi$ is central, it clearly belongs to each $C_i$ and $N_i$. If $\varphi$ is dualizing, then $\varphi \rho_i \rho_{2k-i} \varphi^{-1} = \rho_{2k-i} \rho_i$, and for $0 \leq i \leq k-1$, this is equal to $\rho_i \rho_{2k-i}$. So $\varphi$ belongs to $C_1 \cap \cdots C_{k-1}$. It also belongs to $C_k$ since $\varphi$ sends $\rho_k$ to $\rho_{(2k+1)-k-1} = \rho_k$. Finally, it is clear that $\varphi$ belongs to each $N_i$.
			
	Conversely, suppose that $\varphi \in C_1 \cap N_1 \cap \cdots \cap C_{k-1} \cap N_{k-1} \cap C_k$.
	Then for each $i$ with $0 \leq i \leq k-1$, the automorphism $\varphi$ commutes with $\rho_{i} \rho_{2k-i}$ and normalizes the subgroup
	$\langle \rho_{i}, \rho_{2k-i} \rangle$. We determine where $\varphi$ sends $\rho_i$.
	Since it must send $\rho_i$ to an element of order 2, there are only 3 choices.
	If $\varphi$ fixes $\rho_i$ (i.e. commutes with it), then it must commute with $\rho_{2k-i}$ as well, since it commutes with their product.
	If $\varphi$ sends $\rho_i$ to $\rho_{2k-i}$, then it must in fact interchange $\rho_i$ with $\rho_{2k-i}$, since it commutes with their product.
	Finally, if $\varphi$ sends $\rho_i$ to $\rho_i \rho_{2k-i}$, then $\varphi^{-1}$ sends $\rho_i \rho_{2k-i}$ to $\rho_i$. But $\varphi^{-1}$ commutes with $\rho_i \rho_{2k-i}$, so this implies that $\rho_{2k-i} = \eps$.
			
	So for each pair $\{\rho_i, \rho_{2k-i}\}$, either $\varphi$ commutes with both, or it interchanges them. 
			
	Consider where $\varphi$ sends $\rho_i \rho_{2k-i+1}$. We note that this element has order 2, and so it must be sent to an element of order 2. If $\varphi$ fixes $\rho_i$ and sends $\rho_{2k-i+1}$ to $\rho_{i-1}$, then it sends $\rho_i \rho_{2k-i+1}$ to $\rho_i \rho_{i-1}$, which (usually) does not have order 2. A similar problem happens if $\varphi$ dualizes $\rho_i$ and fixes $\rho_{2k-i+1}$.
	So if $\varphi$ fixes $\rho_i$ and $\rho_{2k-i}$, and if $\calP$ is nondegenerate, then $\varphi$ must also fix
	$\rho_{2k-i+1}$ and $\rho_{i-1}$. Continuing in this manner, we get that $\varphi$ commutes with every generator. Similarly, we find that if $\varphi$ dualizes one generator, it is forced to dualize every one.
	\end{proof}

	What if $\calP$ has even rank?

	\begin{proposition}
	If $\calP$ is a self-dual polytope with at least one central duality, then the number of central  dualities is equal to the size of the center of $\G(\calP)$.
	\end{proposition}
			
	\begin{proof}
	First, note that if $d$ is a central duality and $\varphi$ is a central automorphism, then
	$d \varphi$ is a central duality, so the number of dualities is at least the size of the center.
	Conversely, note that since the product of two central dualities is a central automorphism,
	that means that the center has index at most 2 in the extended center. The number of central dualities is the size of the
	extended center minus the size of the center, and so this is at most the size of the center.
	\end{proof}
}

\comment{
	\subsection{Faithful perm rep}

			Consider a faithful permutation representation of a self-dual string C-group $\Gamma$.
		
		\begin{enumerate}
		\item The action on flags is always self-dual. (That is, the Cayley graph is self-dual, since the
		automorphism group of $\G$ is isomorphic to the colored automorphism group of the Cayley graph.)
		\item For polyhedra, the action on edges is always self-dual. (Basically because the action on edges
		is always faithful, and the subgroup $\langle \rho_0, \rho_2 \rangle$ is self-dual.)
		\item If $\G$ is internally self-dual, then any (faithful?) permutation representation of $\G$
		will have a graph self-duality.
		\item If $\G$ is externally self-dual, things can go either way. The action of $[4, 4]_{(4,0)}$
		on faces is not self-dual. On the other hand, it seems like any faithful action of $[4, 4]_{(2,0)}$ is
		self-dual. (Mark will double-check.)
		\item Every self-dual polytope has a self-dual faithful permutation representation; namely, the action
		on flags. It seems like this might sometimes be the only one.
		\item Does an externally self-dual polytope always have a non-self-dual (non-faithful)
		representation?
		\end{enumerate}
}
			
\subsection{Restrictions on the automorphism group}
\label{restrictions}

Based on the data in Table~\ref{MarstonData} and the restrictions from the previous section, it seems that regular internally self-dual polytopes could be relatively rare, especially in ranks other than three.  Before exploring some existence results about internally self-dual polytopes, let us discuss a few natural questions that arise while looking for examples.  First, one might want to know if the existence of internally or externally self-dual polytopes with a certain group $\Gamma$ as an automorphism group might depend on the abstract class of $\Gamma$.  In a simple way, the answer to this question is yes, in that if the automorphism group of $\Gamma$ is isomorphic to $\Gamma$ itself, then every group automorphism is inner, and there can be no external dualities of the polytope (as seen in Proposition~\ref{symmetric}).  Otherwise, it seems that the abstract structure of the group does not provide insight into whether self-dual regular polytopes will be either externally or internally self-dual.  In particular, there are both internally and externally self-dual regular polytopes with simple groups as their automorphism groups; examples of which can be easily found by considering alternating groups, for example.  

Second, while looking for internally self-dual polytopes of higher rank, a natural question is whether they are built from only internally self-dual polytopes of lower rank.  For example, must the medial sections of an internally self-dual polytope be internally self-dual themselves?  (The \emph{medial section} of a polytope is a section $F/v$ where $F$ is a facet and $v$ is a vertex.) Consider the unique self-dual regular polytope of type of rank four with an alternating group acting on nine points as its automorphism group (see Figure 4 of \cite{high-rank-alternating}).   This is easily seen to be internally self-dual, as the duality is realized as an even permutation on the nine points.  However, this polytope is of type $\{5,6,5\}$, and so its medial sections, being hexagons, are not internally self-dual; see Proposition~\ref{p-gons}.

Finally, Proposition~\ref{int-self-dual-covers} says that if a regular polytope is internally self-dual, then the only regular polytopes that it covers are also internally self-dual.  This is a stringent requirement, so one might hope that the converse would be true. However, there are externally self-dual polytopes that only cover internally-self dual polytopes.  For example, the unique polyhedron $\calP$ of type $\{5, 5\}$ with 320 flags is externally self-dual, and it double-covers the unique polyhedron $\calQ$ of type $\{5, 5\}$ with 160 flags, which is internally self-dual. Furthermore, every quotient of the former (polyhedral or not) filters through the latter, since the kernel of the covering from $\calP$ to $\calQ$ was the unique minimal normal subgroup of $\G(\calP)$. So $\calP$ only covers internally self-dual polyhedra, despite being externally self-dual itself.

\comment{
	\begin{proposition}
	Suppose $\calP = \{p_1, \ldots, p_{n-1}\}$. If $\calP$ is internally self-dual, then every
	$p_i$ is odd.
	\end{proposition}
	\begin{proof}
	Since $\calP$ is internally self-dual, its automorphism group $\G(\calP)$ has a dualizing automorphism $\alpha$, 
	and this automorphism remains dualizing in any quotient of $\G(\calP)$. Let $G = \langle \ol{\rho_0}, \ldots,
	\ol{\rho_{n-1}} \rangle$ be the abelianization of $\G(\calP)$, and let $\ol{\alpha}$ be the image of $\alpha$ in $G$. Then, for each $0 \leq i \leq n-1$, we have
			\[ \ol{\rho_i} \ol{\alpha} = \ol{\alpha} \ol{\rho_i} = \ol{\rho_{n-i-1}} \ol{\alpha}, \]
	where the first equality follows because $G$ is abelian, and the second because $\ol{\alpha}$ is dualizing.
	So $\ol{\rho_i} = \ol{\rho_{n-i-1}}$ in $G$. Now, $G$ is the degenerate string Coxeter
	group $[s_1, \ldots, s_{n-1}]$, where $s_i$ is 1 if $p_i$ is odd, and 2 if $p_i$ is even. 
	The only way for $\ol{\rho_0}$ to equal $\ol{\rho_{n-1}}$ is if every $s_i$ is 1 and all of the generators are identified. So every $p_i$ must be odd.
	\end{proof}
}
		
\section{Examples of internally self-dual polytopes \label{Examples}}		
In this section we will prove the existence of internally and externally self-dual polytopes with various characteristics. 
We mainly focus on rank 3, but higher rank polytopes are also constructed.

\subsection{Rank three}

	First we construct a few families of internally self-dual regular polyhedra. Our main result is the following.

	\begin{theorem} \label{main-thm}
	For each $p \geq 3$ (including $p = \infty$), there is an internally self-dual regular polyhedron of type $\{p, p\}$,
	and for each $p \geq 4$ (including $p = \infty$), there is an externally self-dual regular polyhedron of type $\{p, p\}$.
	Furthermore, if $p$ is even, then there are infinitely many internally and externally self-dual regular polyhedra of type $\{p, p\}.$
	\end{theorem}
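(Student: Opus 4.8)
The plan is to prove the three clauses by explicit construction, leaning on the reduction corollaries already in hand to pass between the internal and external cases and between parities. The work splits naturally into three tasks: (i) exhibit, for every $p \geq 3$ including $p = \infty$, a single internally self-dual regular polyhedron of type $\{p,p\}$; (ii) produce an externally self-dual polyhedron of type $\{p,p\}$ for every $p \geq 4$; and (iii) upgrade ``one example'' to ``infinitely many'' when $p$ is even. The heart of the argument is task (i); the others should follow by bookkeeping once (i) is in place.

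For task (i) I would realize the automorphism group as a concrete permutation group $\Gamma = \langle \rho_0, \rho_1, \rho_2 \rangle$ whose permutation representation graph carries a reflective symmetry interchanging the $0$-edges with the $2$-edges while fixing the $1$-edges, and then force this symmetry to be \emph{induced by an element of $\Gamma$ itself}, so that the resulting dualizing automorphism is inner. Concretely, I would build $\Gamma$ around a cyclic ``core'' of order $p$ together with an element $\alpha$ that centralizes $\rho_1$ and satisfies $\alpha \rho_0 \alpha^{-1} = \rho_2$, so that $\alpha$ is dualizing by construction; the case $p = \infty$ is handled by replacing $\mathbb{Z}_p$ with $\mathbb{Z}$, the resulting group being non-Coxeter, consistent with Proposition~\ref{no-infinite-coxeter}. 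Two verifications are then required: that $\Gamma$ has Schl\"afli type \emph{exactly} $\{p,p\}$ (the orders of $\rho_0\rho_1$ and $\rho_1\rho_2$ do not collapse under the imposed relations), and that $\Gamma$ satisfies the intersection property so that it is a genuine string C-group. The second is the crux, and I would obtain it from the quotient criterion by exhibiting a covering of $\Gamma$ onto a known string C-group that is one-to-one on the facet subgroup $\langle \rho_0, \rho_1 \rangle$. Rather than appeal to Algorithm~\ref{dual-flag-proc} and Proposition~\ref{find-dual-flag} (which presume vertex-describability), I would verify internal self-duality directly by checking $\alpha \rho_i \alpha^{-1} = \rho_{2-i}$ for the explicit $\alpha \in \Gamma$.

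For task (ii), when $p$ is even I would feed the polyhedron from (i) into Corollary~\ref{even-type}: since $\lcm(p,2) = p$, the output $(\calP \mix e)^* \mix e$ is an externally self-dual polyhedron of the \emph{same} type $\{p,p\}$. For odd $p \geq 5$ this shortcut only yields type $\{2p,2p\}$, so there I would instead construct external examples directly, either from torus-like families or by passing to a quotient of the construction in (i) in which the order-reversing symmetry persists as a graph automorphism but is no longer induced by any element of $\Gamma$; Corollary~\ref{fixed-vertices} provides a convenient obstruction for certifying that a self-duality is \emph{not} inner. The value $p = \infty$ on the external side is immediate from Proposition~\ref{no-infinite-coxeter}, since the universal $\{\infty,\infty\}$ is self-dual. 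For task (iii), I would exhibit for even $p$ an infinite family $\calP_k$ of internally self-dual polyhedra of type $\{p,p\}$ with strictly increasing flag count---for instance by enlarging the ``core'' of (i), or by mixing $\calP$ with additional internally self-dual pieces sharing the same dualizing word, which keeps the mix internally self-dual by Proposition~\ref{int-self-dual-mix}; strictly growing orders force non-isomorphism. Applying Corollary~\ref{even-type} to each $\calP_k$ then produces infinitely many externally self-dual polyhedra of type $\{p,p\}$.

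I expect the main obstacle to be simultaneously controlling three quantities in the core construction of (i): that the type remains exactly $\{p,p\}$, that the intersection property holds so that $\Gamma$ is a string C-group and $\calP$ an actual polytope, and that the evident order-reversing symmetry is inner rather than merely outer. Keeping the dualizing element \emph{inside} $\Gamma$ while verifying the intersection property via the quotient criterion is where the genuine effort lies; once the family in (i) is established, the parity bookkeeping for the external and infinitude clauses is essentially routine given Corollaries~\ref{even-type} and~\ref{IntToExt}.
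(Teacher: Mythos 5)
Your decomposition into three tasks and your use of Corollary~\ref{even-type} to convert the infinite internal family into an infinite external family for even $p$ match the paper, and your plan for task (i) --- a permutation representation whose $0$/$2$-interchanging graph symmetry is realized by an element of the group itself --- is essentially the paper's construction (Theorem~\ref{all-p-example}). The genuine gap is in the externally self-dual clause for odd $p \geq 5$, where both of your proposed routes fail. There are no toroidal polyhedra of type $\{p,p\}$ for $p \geq 5$ (the equivelar torus maps have types $\{4,4\}$, $\{3,6\}$ and $\{6,3\}$ only), so ``torus-like families'' give nothing here. Worse, your second route --- passing to a quotient of the internally self-dual polyhedron from (i) in which the order-reversing symmetry is no longer inner --- is impossible by the paper's own Proposition~\ref{int-self-dual-covers}: every regular polytope covered by an internally self-dual one is again internally self-dual, so no polytopal quotient of your $\calP$ can be externally self-dual.

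The fix is the observation you make only for $p = \infty$ but which applies verbatim to every finite $p \geq 4$: the universal polyhedron $\{p,p\}$ has an infinite string Coxeter group for all $p \geq 4$ (Euclidean for $p = 4$, hyperbolic for $p \geq 5$), so Proposition~\ref{no-infinite-coxeter} makes it externally self-dual of exactly the right type; this is how the paper disposes of the entire external clause in one line via Theorem~\ref{universal}. Two smaller points: for $p = \infty$ on the internal side the paper does not pass to a $\mathbb{Z}$-core but instead mixes the finite examples of Theorem~\ref{all-p-example}, which all share the dualizing word $(\rho_0\rho_2\rho_1)^6$, so that Proposition~\ref{int-self-dual-mix} applies; and for the infinitude in the even case your mixing idea is somewhat circular (you need an infinite supply of internally self-dual pieces of type $\{p,p\}$ with a common dualizing word before you can mix them), whereas your ``enlarge the core'' option is the one the paper actually carries out, with pairwise distinctness certified by computing, in each member of the family, the minimal power of a fixed word that is dualizing.
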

	
	\comment{
		Before we begin the proof of this theorem, we note that there are also externally self dual regular polyhedra of type $\{p,p\}$ for $p$ odd.  These can be constructed by finding self-dual polyhedra with automorphism groups that are subgroups of alternating groups, but with dualities that are odd permutations.  However, the proofs of the structures of these examples would add length and complications to this paper that we chose to avoid.
		
		{\bf Note: I am not too happy with this exact wording, but I think it is better than nothing.}
	}

	We will focus on constructing internally self-dual regular polyhedra; then Theorem~\ref{universal}
	and Corollary~\ref{even-type} will take care of the rest. First we will show that there is an
	internally self-dual polyhedron of type $\{p, p\}$ for each $p \geq 3$. The data from \cite{conder-atlas}
	provides examples for $3 \leq p \leq 12$.
	We will construct a family that covers $p \geq 7$. We start with a simple lemma.

	\begin{lemma}
	\label{distinct-cycles}
	Suppose $\pi_1$ and $\pi_2$ are distinct permutations that act cyclically on $n$ points and that
	$\pi_1^{d_1} = \pi_2^{d_2}$ for some $d_1$ and $d_2$. Suppose that for some positive integer
	$k$, there is a unique point $i$ such that $i \pi_1^j = i \pi_2^j$ for all $j$ with $1 \leq j \leq k$.
	Then $d_1 = d_2 = 0 \ (\textrm{mod }n)$.
	\end{lemma}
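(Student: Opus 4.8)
The plan is to reduce everything to showing that the common power $\sigma := \pi_1^{d_1} = \pi_2^{d_2}$ is the identity. Since $\pi_1$ and $\pi_2$ each act as a single $n$-cycle, a power $\pi_1^{d}$ equals the identity exactly when $n \mid d$; consequently $\sigma = \eps$ if and only if $d_1 \equiv 0 \pmod n$, and likewise if and only if $d_2 \equiv 0 \pmod n$. So once I prove $\sigma = \eps$, both congruences follow simultaneously and the lemma is established.

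To prove $\sigma = \eps$, I would argue by contradiction, exploiting the uniqueness hypothesis. Let $A = \{p : p\pi_1^j = p\pi_2^j \text{ for all } 1 \le j \le k\}$; the hypothesis says $A$ consists of the single point $i$. The key observation is that $\sigma$, being a power of $\pi_1$, commutes with $\pi_1$ (hence with each $\pi_1^j$), and being a power of $\pi_2$, commutes with $\pi_2$ (hence with each $\pi_2^j$). Therefore $A$ is invariant under $\sigma$: if $p \in A$ then for every $j$ with $1 \le j \le k$,
$$(p\sigma)\pi_1^j = p\pi_1^j\sigma = p\pi_2^j\sigma = (p\sigma)\pi_2^j,$$
so $p\sigma \in A$ as well.

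Finally, I would invoke the structure of cyclic actions. If $\sigma \ne \eps$, then $\sigma$ is a nontrivial power of an $n$-cycle, so its cycle type consists of $\gcd(n, d_1)$ cycles each of length $n/\gcd(n,d_1) > 1$; in particular $\sigma$ has no fixed points. Then $i\sigma$ is a point of $A$ distinct from $i$, contradicting $|A| = 1$. Hence $\sigma = \eps$, which completes the argument.

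The substantive content — and the only place where real thought is needed — is the pairing of two facts: that $\sigma$ commutes with both $\pi_1$ and $\pi_2$ (so it permutes the agreement set $A$ onto itself) and that a nontrivial power of a single $n$-cycle is fixed-point-free (so it cannot fix the unique element of $A$). Everything else is bookkeeping, so I do not anticipate a serious obstacle; the distinctness of $\pi_1$ and $\pi_2$ is in fact already implied by $|A|=1$ for $n>1$ and need not be used separately.
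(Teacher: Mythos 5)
Your proof is correct and follows essentially the same route as the paper's: both show that the common power $\pi_1^{d_1}=\pi_2^{d_2}$ sends the unique agreement point $i$ to another agreement point (via commutation with powers of $\pi_1$ and $\pi_2$), hence fixes $i$, and then conclude from the $n$-cycle structure that $d_1\equiv d_2\equiv 0 \pmod n$. Your packaging via the invariant set $A$ and the explicit remark that a nontrivial power of an $n$-cycle is fixed-point-free just makes explicit what the paper leaves implicit.
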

	
	\begin{proof}
	Since $\pi_1^{d_1} = \pi_2^{d_2}$, it follows that $\pi_2$ and $\pi_1^{d_1}$ commute. Then, for $1 \leq j \leq k$,
	\begin{align*}
	(i \pi_1^{d_1}) \pi_2^j &= (i \pi_2^j) \pi_1^{d_1} \\
	&= (i \pi_1^j) \pi_1^{d_1} \\
	&= (i \pi_1^{d_1}) \pi_1^j.
	\end{align*}
	That is, $\pi_1^j$ and $\pi_2^j$ act the same way on $(i \pi_1^{d_1})$ for $1 \leq j \leq k$. 
	By assumption, $i$ was the only point such that $\pi_1^j$ and $\pi_2^j$ act the same way on that
	point for $1 \leq j \leq k$. It follows that $\pi_1^{d_1}$ (which is equal to
	$\pi_2^{d_2}$) fixes $i$, which implies that $d_1 = d_2 = 0 \  (\textrm{mod } n)$. 
	\end{proof}
	
	\begin{theorem}
	\label{all-p-example}
	For each $p \geq 7$, there is an internally self-dual polyhedron
	of type $\{p, p\}$ such that $(\rho_0 \rho_2 \rho_1)^6$ is dualizing.
	\end{theorem}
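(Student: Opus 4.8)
The plan is to exhibit the polyhedron explicitly through a permutation representation: for each $p$ I would write down three involutions $\rho_0,\rho_1,\rho_2$ as permutations of a finite point set $X_p$ (whose size grows with $p$) and read off all the required properties from their cycle structure. The design principle is that the permutation representation graph should carry a symmetry that interchanges the $0$-labelled and $2$-labelled edges while fixing the $1$-labelled edges; such a symmetry realizes the self-duality, and if it is induced by conjugation by an element of $\langle \rho_0,\rho_1,\rho_2\rangle$ then the polyhedron is internally self-dual. The precise content of the theorem is that this symmetry can be taken to be conjugation by $(\rho_0 \rho_2 \rho_1)^6$, so the permutations must be arranged so that the sixth power of $\rho_0\rho_2\rho_1$ is exactly the label-swapping permutation of $X_p$.

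First I would record the string relations straight from the definitions of $\rho_0,\rho_1,\rho_2$: that each is an involution, that $\rho_0$ and $\rho_2$ commute (so $\langle\rho_0,\rho_1,\rho_2\rangle$ is a string group generated by involutions), and that $\rho_0\rho_1$ and $\rho_1\rho_2$ each have order dividing $p$. Next I would verify that $\alpha = (\rho_0\rho_2\rho_1)^6$ is dualizing. Since we work inside a faithful permutation representation, it suffices to compute $\alpha$ as a permutation of $X_p$ and check the three conjugation identities $\alpha\rho_0\alpha^{-1}=\rho_2$, $\alpha\rho_1\alpha^{-1}=\rho_1$, and $\alpha\rho_2\alpha^{-1}=\rho_0$; these say exactly that $\alpha\rho_i = \rho_{2-i}\alpha$, which is the dualizing condition of Definition~\ref{def-int-sd} for $n=3$. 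Concretely this amounts to checking that, as a permutation, $(\rho_0\rho_2\rho_1)^6$ carries the $0$-edges of the graph onto the $2$-edges and fixes the $1$-edges; because $\alpha$ lies in the group by construction, this single computation establishes simultaneously that the self-duality is realized internally and that the specified element does the job.

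The substantive work is to prove that $\langle\rho_0,\rho_1,\rho_2\rangle$ is a string C-group of type exactly $\{p,p\}$. For the exact type I would show that $\rho_0\rho_1$ and $\rho_1\rho_2$ have order exactly $p$ rather than a proper divisor, and this is where Lemma~\ref{distinct-cycles} enters, as it is tailored to show that powers of two distinct cyclically acting permutations cannot coincide prematurely. For polytopality it suffices, in rank three, to verify the single intersection condition $\langle\rho_0,\rho_1\rangle \cap \langle\rho_1,\rho_2\rangle = \langle\rho_1\rangle$ from the intersection property (Equation~\ref{intprop}); alternatively one can invoke the quotient criterion by exhibiting a known string C-group covered by our group on which the covering restricts faithfully to $\langle\rho_0,\rho_1\rangle$. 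Both $\langle\rho_0,\rho_1\rangle$ and $\langle\rho_1,\rho_2\rangle$ are dihedral of order $2p$, so an element in their intersection forces a coincidence between a power of $\rho_0\rho_1$ and a power of $\rho_1\rho_2$; taking $\pi_1 = \rho_0\rho_1$ and $\pi_2 = \rho_1\rho_2$ in Lemma~\ref{distinct-cycles} then forces the two exponents to be trivial, leaving only $\eps$ and $\rho_1$ in the intersection.

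I expect this intersection argument, carried out uniformly in $p$, to be the main obstacle: one must control the joint cycle structure of the two rotations $\rho_0\rho_1$ and $\rho_1\rho_2$ well enough to exclude accidental coincidences among their powers, and do so for every $p \geq 7$ at once. Lemma~\ref{distinct-cycles} is precisely the device that makes this uniform argument possible, so arranging the permutations so that its hypothesis (a unique point of agreement of the two rotations up through the relevant exponent) is manifestly satisfied is the crux of the construction. Finally, I would observe that the same template, run over an infinite point set, handles $p=\infty$, and that restricting to $p \geq 7$ costs nothing since the remaining small cases are already supplied by the atlas \cite{conder-atlas}.
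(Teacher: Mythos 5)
Your plan matches the paper's strategy almost exactly---an explicit permutation representation whose graph has a label-swapping symmetry, a direct computation showing $(\rho_0\rho_2\rho_1)^6$ realizes that symmetry, and an appeal to Lemma~\ref{distinct-cycles} to verify the intersection condition $\langle\rho_0,\rho_1\rangle\cap\langle\rho_1,\rho_2\rangle=\langle\rho_1\rangle$ uniformly in $p$. But the proposal never actually exhibits the permutations, and that is where essentially all of the content of the theorem lives. You correctly identify that ``arranging the permutations so that [the lemma's] hypothesis\dots is manifestly satisfied is the crux of the construction,'' yet you leave that crux unresolved. The paper's proof supplies a concrete $p$-point graph (with slightly different shapes for $p$ odd and $p$ even) for which one can write $\rho_0\rho_1$ and $\rho_2\rho_1$ as explicit $p$-cycles agreeing on a longest run of points starting at the point $7$, so that Lemma~\ref{distinct-cycles} applies with $i=7$, $k=p-6$. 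Without a candidate construction, none of the verifications you list can be carried out, so the proof is incomplete rather than wrong.

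There is also a smaller logical gap in your reduction of the intersection condition. An element of $\langle\rho_0,\rho_1\rangle\cap\langle\rho_1,\rho_2\rangle$ need not equate a rotation with a rotation: after the standard normalization it can happen that a rotation $(\rho_0\rho_1)^{d_1}$ equals a \emph{reflection} $\rho_1(\rho_2\rho_1)^{d_2}$ in the other dihedral group, and Lemma~\ref{distinct-cycles} says nothing about this case. For $p$ odd one disposes of it because the only involutory rotation would force $(\rho_0\rho_1)^{2d_1}=\eps$ and hence triviality, but for $p$ even the element $(\rho_0\rho_1)^{p/2}$ is a genuine involution and the possibility $(\rho_0\rho_1)^{p/2}=\rho_1(\rho_2\rho_1)^{d_2}$ must be excluded by a separate direct computation (the paper does this by tracking where the two sides send the points $1$ and $3$). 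Your sentence ``an element in their intersection forces a coincidence between a power of $\rho_0\rho_1$ and a power of $\rho_1\rho_2$'' silently skips this case.
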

	
	\begin{proof}
	We will construct a permutation representation graph, and then show that it is a CPR graph.
	If $p$ is odd, then consider the following permutation representation graph:
     $$ \xymatrix{
	     *+[o][F]{4}     \ar@{=}[dd]_0^2   \ar@{-}[rr]^1	&	&	*+[o][F]{5}     \ar@{-}[dl]_2   \ar@{-}[dr]^0&		&	&	&	&	&		&								\\	
				&	*+[o][F]{1}   \ar@{-}[dr]_0&		&*+[o][F]{6}   \ar@{-}[dl]^2 \ar@{-}[r]^1	&*+[o][F]{7}	  \ar@{=}[r]^0_2& *+[o][F]{8}	 \ar@{-}[r]^1 &*+[o][F]{9}	 \ar@{.}[r]&	*+[o][F]{_{p-2}}  \ar@{=}[r]^0_2 & *+[o][F]{_{p-1}}  \ar@{-}[r]^1 & *+[o][F]{_p}										\\
		*+[o][F]{3}    \ar@{-}[rr]_1 			&	&*+[o][F]{2}    		&	&	&		&	&	&	&									}$$
	If $p$ is even, then then instead consider the following.
     $$ \xymatrix{
	     *+[o][F]{4}     \ar@{=}[dd]_0^2   \ar@{-}[rr]^1	&	&	*+[o][F]{5}     \ar@{-}[dl]_2   \ar@{-}[dr]^0&		&	&	&	&	&		&								\\	
				&	*+[o][F]{1}   \ar@{-}[dr]_0&		&*+[o][F]{6}   \ar@{-}[dl]^2 \ar@{-}[r]^1	&*+[o][F]{7}	  \ar@{=}[r]^0_2& *+[o][F]{8}	 \ar@{-}[r]^1 &*+[o][F]{9}	 \ar@{.}[r]&	*+[o][F]{_{p-2}}  \ar@{-}[r]^1 & *+[o][F]{_{p-1}}  \ar@{=}[r]^0_2 & *+[o][F]{_p}										\\
		*+[o][F]{3}    \ar@{-}[rr]_1 			&	&*+[o][F]{2}    		&	&	&		&	&	&	&									}$$	
	In both cases, it is easy to see that the group is a string group generated by involutions.  To verify this, we only need to notice that the subgraph induced by edges of labels 0 and 2, consists of connected components that are either isolated vertices, double edges, or squares with alternating labels.
	
	If $p$ is odd, then
	\[ \rho_0 \rho_2 \rho_1 = (1,7,6)(2,4,5,3)(8,9)(10,11) \cdots (p-1, p), \]
	and if $p$ is even, then
	\[ \rho_0 \rho_2 \rho_1 = (1,7,6)(2,4,5,3)(8,9)(10,11) \cdots (p-2, p-1). \]
	In both cases, it follows that $(\rho_0 \rho_2 \rho_1)^6 = (2,5)(3,4)$. It is simple to show that this is
	in fact a dualizing automorphism. In other words, $(\rho_0 \rho_2 \rho_1)^6 \rho_i$ acts the same on every vertex
	as $\rho_{2-i} (\rho_0 \rho_2 \rho_1)^6$.
	
	It remains to show that $\langle \rho_0, \rho_1, \rho_2 \rangle$ is a string C-group.  Following \cite[Proposition 2E16]{arp}, it suffices to show that
	$\langle \rho_0, \rho_1 \rangle \cap \langle \rho_1, \rho_2 \rangle = \langle \rho_1 \rangle$. 

	Let $\varphi$ be in the intersection, $\varphi \notin \langle \rho_1 \rangle$. Without loss
	of generality, we may assume that $\varphi = (\rho_0 \rho_1)^{d_1}$ for some $d_1$ (since if $\varphi$
	is odd, then $\varphi \rho_1$ is also in the intersection and can be written like that). 
	Now, $\varphi$ is also in $\langle \rho_1, \rho_2 \rangle$. If $\varphi = \rho_1 (\rho_2 \rho_1)^{d_2}$
	for some $d_2$, then $\varphi^2 = \eps$, and thus $(\rho_0 \rho_1)^{2d_1} = \eps$. 
	If $p$ is odd, then this only happens if $\varphi$ is itself the identity, contrary to our assumption.
	So in that case we may assume that $\varphi = (\rho_2 \rho_1)^{d_2}$ for some $d_2$.
	If $p$ is even, then in principle, it is possible that $\varphi = (\rho_0 \rho_1)^{p/2}$,
	and so it could happen that $\varphi = \rho_1 (\rho_2 \rho_1)^{d_2}$.

	Suppose $p$ is odd, $p \geq 9$. Then
	\[ \rho_0 \rho_1 = (1, 3, 5, 7, 9, \ldots, p, p-1, p-3, \ldots, 6, 4, 2) \]
	and
	\[ \rho_2 \rho_1 = (1, 4, 2, 7, 9, \ldots, p, p-1, p-3, \ldots, 6, 3, 5). \]
	We note that these cycles act the same way on 3, on 4, and on 7 through $p$.
	Indeed, 7 is the start of a unique longest sequence of points on which $\rho_0 \rho_1$
	and $\rho_2 \rho_1$ act, and so we can apply Lemma~\ref{distinct-cycles} with $i = 7$ and $k = p-6$.
	It follows that there is no nontrivial equation of the form $(\rho_0 \rho_1)^{d_1} = (\rho_2 \rho_1)^{d_2}$.
	
	Now, suppose $p$ is even. Then
	\[ \rho_0 \rho_1 = (1, 3, 5, 7, 9, \ldots, p-1, p, p-2, p-4, \ldots, 6, 4, 2) \]
	and
	\[ \rho_2 \rho_1 = (1, 4, 2, 7, 9, \ldots, p-1, p, p-2, p-4, \ldots, 6, 3, 5). \]
	As in the odd case, $(\rho_0 \rho_1)^{d_1}$ cannot
	equal $(\rho_2 \rho_1)^{d_2}$, by Lemma~\ref{distinct-cycles} with $i = 7$ and $k = p-6$.
	We still need to rule
	out the case $(\rho_0 \rho_1)^{p/2} = \rho_1 (\rho_2 \rho_1)^{d_2}$. Note that
	$(\rho_0 \rho_1)^{p/2}$ always sends 1 to $p$. In order for $\rho_1 (\rho_2 \rho_1)^{d_2}$
	to do the same thing, we would need $d_2 = p/2$ as well. But then $(\rho_0 \rho_1)^{p/2}$ sends 3
	to $p-2$, whereas $\rho_1 (\rho_2 \rho_1)^{p/2}$ sends 3 to $p-3$. So that rules
	out this case, proving that the intersection condition holds.
	
	The remaining cases where $p=7, 8$ can be verified using a computer algebra system.
	\end{proof}

	Using the polyhedra built in Theorem~\ref{all-p-example} as a base, we can construct an
	infinite polyhedron that is internally self-dual.
	
	\begin{theorem}
	\label{infinite-mix}
	Consider a family of internally self-dual regular polyhedra $\{\calP_i\}_{i=1}^{\infty}$.
	Let $\G(\calP_i) = \langle \rho_0^{(i)}, \rho_1^{(i)}, \rho_2^{(i)} \rangle$.
	Suppose that there is a finite sequence $j_1, \ldots, j_m$ such that
	$\rho_{j_1}^{(i)} \cdots \rho_{j_m}^{(i)}$ is dualizing in each $\G(\calP_i)$.
	Then $\calP = \calP_1 \mix \calP_2 \mix \cdots$ is an infinite internally self-dual
	regular polyhedron.
	\end{theorem}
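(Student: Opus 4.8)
The plan is to exhibit an explicit dualizing automorphism inside the mix, then verify the intersection property so that $\calP$ is genuinely a regular polyhedron, and finally check that it is infinite. Write $\G(\calP) = \langle \s_0, \s_1, \s_2 \rangle$, where $\s_k = (\rho_k^{(1)}, \rho_k^{(2)}, \ldots)$ is the $k$-th generator of the mix sitting inside $\prod_i \G(\calP_i)$. Because the defining relations of a string group generated by involutions hold componentwise, $\G(\calP)$ is automatically such a group. For the dualizing element I would set $\alpha = \s_{j_1} \cdots \s_{j_m}$; since this is a product of generators it lies in the mix, and componentwise $\alpha = (w^{(1)}, w^{(2)}, \ldots)$ with $w^{(i)} = \rho_{j_1}^{(i)} \cdots \rho_{j_m}^{(i)}$ dualizing in $\G(\calP_i)$ by hypothesis. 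The relation $\alpha \s_k = \s_{2-k}\alpha$ holds in $\G(\calP)$ exactly when it holds in each component, and it does, so $\alpha$ is dualizing.

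The substantive step is the intersection property $\langle \s_0, \s_1 \rangle \cap \langle \s_1, \s_2 \rangle = \langle \s_1 \rangle$, which is what upgrades the string group to a string C-group. Let $g$ be in the intersection. Projecting to the $i$-th factor, $g_i \in \langle \rho_0^{(i)}, \rho_1^{(i)} \rangle \cap \langle \rho_1^{(i)}, \rho_2^{(i)} \rangle = \langle \rho_1^{(i)} \rangle$, since each $\calP_i$ is a string C-group; hence every $g_i \in \{\eps, \rho_1^{(i)}\}$. Now $\langle \s_0, \s_1 \rangle$ is dihedral, so $g$ is either a rotation $(\s_0 \s_1)^k$ or a reflection $\s_1(\s_0 \s_1)^k$ for a single exponent $k$. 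In the rotation case each $g_i = (\rho_0^{(i)}\rho_1^{(i)})^k$ is a rotation of $\langle \rho_0^{(i)}, \rho_1^{(i)} \rangle$; because every $\calP_i$ is non-degenerate by Corollary~\ref{no-degenerate}, the only rotation lying in $\{\eps, \rho_1^{(i)}\}$ is $\eps$, so $g = \eps$. In the reflection case each $g_i$ is a reflection and must therefore equal $\rho_1^{(i)}$, giving $g = \s_1$. Thus the intersection is exactly $\langle \s_1 \rangle$, $\G(\calP)$ is a string C-group, and $\calP$ is a regular polyhedron, which is internally self-dual by the first paragraph.

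It remains to see that $\calP$ is infinite, and here I would use the fact that $\G(\calP)$ surjects onto every $\G(\calP_i)$. For the intended application (the family of Theorem~\ref{all-p-example}) the $\calP_i$ realize unbounded Schl\"afli types, so $\s_0 \s_1$ has order divisible by each $p_i$ and hence has infinite order. More generally one can note that a finite group admits only finitely many distinct quotients, so as soon as infinitely many of the $\calP_i$ are pairwise non-isomorphic, $\G(\calP)$ cannot be finite.

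I expect the intersection property to be the crux: the delicate point is that the dihedral structure of $\langle \s_0, \s_1 \rangle$, combined with non-degeneracy of the factors, forces the projections of any intersection element to be \emph{uniformly} $\eps$ or \emph{uniformly} $\rho_1^{(i)}$. This is precisely what excludes the spurious ``mixed'' elements that could otherwise break polytopality; once it is in hand, the dualizing automorphism and the infinitude are comparatively routine.
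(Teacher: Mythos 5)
Your proposal is correct and follows essentially the same route as the paper: the dualizing element is verified componentwise, and the intersection condition is established by the same dihedral parity argument forcing all projections of an intersection element to be uniformly $\eps$ or uniformly $\rho_1^{(i)}$. The only difference is that you justify infinitude explicitly (via the finitely-many-quotients observation), where the paper simply asserts it; this is a reasonable refinement, since the claim implicitly requires infinitely many pairwise non-isomorphic $\calP_i$.
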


	\begin{proof}
	It is clear that $\calP$ is infinite.
	Let $\rho_0 = (\rho_0^{(1)}, \rho_0^{(2)}, \ldots)$, 
	$\rho_1 = (\rho_1^{(1)}, \rho_1^{(2)}, \ldots)$, 
	$\rho_2 = (\rho_2^{(1)}, \rho_2^{(2)}, \ldots)$.
	To show that $\calP$ is a polyhedron, we need to show that $\langle \rho_0, \rho_1, \rho_2 \rangle$
	satisfies the intersection condition. In particular, by Proposition~2E16 of \cite{arp}, the only intersection that is nontrivial
	to check is $\langle \rho_0, \rho_1 \rangle \cap \langle \rho_1, \rho_2 \rangle$.
	Suppose $\varphi \in \langle \rho_0, \rho_1 \rangle \cap \langle \rho_1, \rho_2 \rangle$.
	For each $i$, let $\varphi_i$ be the natural projection of $\varphi$ in $\G(\calP_i)$, where
	we send each $\rho_j$ to $\rho_j^{(i)}$. Since $\calP_i$ is a polyhedron, each $\varphi_i$
	is either $\eps$ or $\rho_1^{(i)}$. Now, since $\langle \rho_0, \rho_1 \rangle$ is dihedral,
	the automorphism $\varphi$ is either even or odd. Furthermore, its projection $\varphi_i$ into
	the dihedral group $\langle \rho_0^{(i)}, \rho_1^{(i)} \rangle$ must have the same parity as
	$\varphi$ itself. Therefore, every $\varphi_i$ must have the same parity, and so either
	$\varphi_i = \eps$ for every $i$, or $\varphi_i = \rho_1^{(i)}$ for every $i$. In the first case,
	$\varphi = \eps$, and in the second, $\varphi = \rho_1$. This proves the intersection condition.
	
	Finally, it is clear that $\rho_{j_1} \cdots \rho_{j_m}$ is dualizing in $\G(\calP)$,
	and so $\calP$ is internally self-dual.
	\end{proof}

	\begin{corollary}
	\label{infinite-example}
	There is an infinite internally self-dual regular polyhedron of type $\{\infty, \infty\}$, with
	dualizing automorphism $(\rho_0 \rho_2 \rho_1)^6$.
	\end{corollary}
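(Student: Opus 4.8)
The plan is to obtain this polyhedron as an infinite mix of the finite examples built in Theorem~\ref{all-p-example}, invoking Theorem~\ref{infinite-mix} directly. First I would take the family $\{\calP_p\}_{p \geq 7}$ produced by Theorem~\ref{all-p-example}: each $\calP_p$ is an internally self-dual regular polyhedron of type $\{p, p\}$, and crucially the same word $(\rho_0 \rho_2 \rho_1)^6$ is dualizing in every one of them. Reindexing this as a sequence $\{\calP_i\}_{i=1}^{\infty}$ (say $\calP_i$ is the example of type $\{i+6, i+6\}$), the hypothesis of Theorem~\ref{infinite-mix} is satisfied with $m = 18$ and $(j_1, \ldots, j_m) = (0, 2, 1, 0, 2, 1, \ldots, 0, 2, 1)$, i.e.\ the sequence giving the word $(\rho_0 \rho_2 \rho_1)^6$.

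Applying Theorem~\ref{infinite-mix}, the mix $\calP = \calP_1 \mix \calP_2 \mix \cdots$ is then an infinite internally self-dual regular polyhedron in which $(\rho_0 \rho_2 \rho_1)^6 = \rho_{j_1} \cdots \rho_{j_m}$ is dualizing. This already yields everything in the statement except the identification of the Schl\"afli type, so almost all of the work has been done in the two preceding theorems.

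It remains only to check that $\calP$ has type $\{\infty, \infty\}$. Writing $\rho_0 \rho_1 = (\rho_0^{(i)} \rho_1^{(i)})_{i \geq 1}$ in the mix, the order of $\rho_0 \rho_1$ is the least common multiple of the orders of its projections, each of which is $p_i = i+6$. Since the $p_i$ are unbounded, this lcm is infinite, so $\rho_0 \rho_1$ has infinite order; by the dualizing symmetry (equivalently, since each factor has type $\{p_i, p_i\}$) the same holds for $\rho_1 \rho_2$. Hence $\calP$ is of type $\{\infty, \infty\}$.

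The only point requiring any care — and it is a mild one — is choosing the subfamily so that the orders $p_i$ are genuinely unbounded, as this is precisely what forces the $\infty$ entries in the type; the polytopality and internal self-duality come for free from Theorem~\ref{infinite-mix}.
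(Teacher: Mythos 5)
Your proposal is correct and follows exactly the paper's route: the paper's entire proof is ``Apply the construction of Theorem~\ref{infinite-mix} to the polyhedra in Theorem~\ref{all-p-example}.'' Your additional verification that the type is $\{\infty,\infty\}$ (via the unboundedness of the orders of $\rho_0\rho_1$ in the factors) is a detail the paper leaves implicit, and it is handled correctly.
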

	
	\begin{proof}
	Apply the construction of Theorem~\ref{infinite-mix} to the polyhedra in
	Theorem~\ref{all-p-example}.
	\end{proof}
	
	The infinite polyhedron of Corollary~\ref{infinite-example} is a little difficult to work with; we
	have neither a permutation representation nor a presentation for the automorphism group. With this example,
	however, we can now build a simpler example.
	
	\begin{corollary}
	Let $\G$ be the quotient of $[\infty, \infty]$ by the three relations 
	\[ (\rho_0 \rho_2 \rho_1)^6 \rho_i = \rho_{2-i} (\rho_0 \rho_2 \rho_1)^6, \textrm{ where $0 \leq i \leq 2$.} \]
	Then $\G$ is the automorphism group of an infinite internally self-dual regular polyhedron of type $\{\infty, \infty\}$.
	\end{corollary}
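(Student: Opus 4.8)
The plan is to set $\alpha = (\rho_0\rho_2\rho_1)^6$ and observe that, for $n=3$, the three imposed relations read $\alpha\rho_i = \rho_{2-i}\alpha$ for $0 \le i \le 2$; these are precisely the conditions making $\alpha$ a dualizing automorphism. Hence, the moment we know that $\G$ is a string C-group, the associated regular polyhedron will be internally self-dual by Definition~\ref{def-int-sd}. So the real content is to show that $\G$ is a string C-group that is infinite and of type $\{\infty,\infty\}$. My strategy is to compare $\G$ with the infinite internally self-dual polyhedron $\calP$ produced in Corollary~\ref{infinite-example}.

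First I would establish a covering $\G \to \G(\calP)$. Since $\calP$ has type $\{\infty,\infty\}$, its group $\G(\calP)$ is a quotient of $[\infty,\infty]$; and since $(\rho_0\rho_2\rho_1)^6$ is dualizing in $\G(\calP)$ by Corollary~\ref{infinite-example}, the three defining relations of $\G$ already hold in $\G(\calP)$. Thus the map sending each generator to the like-indexed generator extends to a surjective homomorphism $\G \to \G(\calP)$. Because $\rho_0\rho_1$ and $\rho_1\rho_2$ have infinite order in $\G(\calP)$ and a finite-order element cannot surject onto one of infinite order, these two elements must already have infinite order in $\G$. Consequently $\G$ has type $\{\infty,\infty\}$, and $\G$ is infinite since it maps onto the infinite group $\G(\calP)$.

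The crux is showing that $\G$ is a string C-group, and here I would invoke the quotient criterion. Note that $\G$ is a string group generated by involutions: it inherits $\rho_i^2 = \eps$ and $(\rho_0\rho_2)^2 = \eps$ from $[\infty,\infty]$, and each $\rho_i$ is a genuine involution (and the three are distinct) because they map to such elements in the honest polyhedron $\G(\calP)$. Apply the quotient criterion with $\Lambda = \G(\calP)$, which is a string C-group; the only thing to verify is that the covering is one-to-one on the facet subgroup $\langle \rho_0, \rho_1 \rangle$. In $\G$, this subgroup is dihedral with $\rho_0\rho_1$ of infinite order (established above), hence isomorphic to $D_\infty$, and it surjects onto $\langle \rho_0, \rho_1 \rangle$ in $\G(\calP)$, which is likewise $D_\infty$. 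A surjection of infinite dihedral groups carrying the two standard involutions to the two standard involutions is forced to be an isomorphism, so the covering is indeed injective on $\langle \rho_0, \rho_1 \rangle$. The quotient criterion then gives that $\G$ is a string C-group.

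Putting these together, $\G$ is the automorphism group of a regular polyhedron, which is infinite and of type $\{\infty,\infty\}$ by the second step, and internally self-dual with dualizing automorphism $\alpha = (\rho_0\rho_2\rho_1)^6$ by the first step. I expect the main obstacle to be the intersection property of the third step, but it is rendered essentially routine here: the single transferable fact that $\rho_0\rho_1$ has infinite order in $\G$ pins the facet subgroup down to $D_\infty$ on both sides of the covering, which is exactly what the quotient criterion needs.
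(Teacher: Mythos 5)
Your proof is correct and takes essentially the same route as the paper: show that $\G$ covers the automorphism group of the infinite polyhedron of Corollary~\ref{infinite-example} and apply the quotient criterion, with the covering being one-to-one on the facet subgroup $\langle \rho_0, \rho_1 \rangle$. The paper states this much more tersely; your added verifications (that $\rho_0\rho_1$ has infinite order in $\G$, and that a generator-preserving surjection $D_\infty \to D_\infty$ is an isomorphism) are precisely the details it leaves implicit.
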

	
	\begin{proof}
	First, note that $\G$ covers the automorphism group of any polyhedron with dualizing automorphism
	$(\rho_0 \rho_2 \rho_1)^6$. Therefore, $\G$ covers the automorphism group of the polyhedron in
	Corollary~\ref{infinite-example}. Then the quotient criterion (see \cite[Thm. 2E17]{arp})
	shows that this is a polyhedron, since it covers the polyhedron
	in Corollary~\ref{infinite-example} without any collapse of the facet subgroup $\langle \rho_0, \rho_1 \rangle$.
	\end{proof}

	Now let us prove that there are infinitely many internally self-dual polyhedra of type $\{p, p\}$
	when $p \geq 4$ and $p$ is even. We first cover the case $p = 4$.
	
	\begin{proposition}
	\label{torus-map}
	The polyhedron $\{4, 4\}_{(s, 0)}$ is internally self-dual if and only if $s$ is odd.
	\end{proposition}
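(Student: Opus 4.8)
The plan is to work with the standard Euclidean realization of $\{4,4\}_{(s,0)}$ as the quotient of the square tiling of the plane by the sublattice $\Lambda_s = s\Z^2$, and then to pin down the abstract duality as a concrete geometric map. Placing the base vertex at the origin and the base edge along the segment from $(0,0)$ to $(1,0)$, one takes $\rho_0\colon(x,y)\mapsto(1-x,y)$, $\rho_1\colon(x,y)\mapsto(y,x)$, and $\rho_2\colon(x,y)\mapsto(x,-y)$. A direct check shows these are involutions generating $[4,4]$, that $\rho_0\rho_1$ and $\rho_1\rho_2$ have order $4$ and $\rho_0\rho_2$ order $2$, and that modulo $\Lambda_s$ they give $\G(\{4,4\}_{(s,0)})\cong\Z_s^2\rtimes D_4$ of order $8s^2$, where the translation subgroup $T\cong\Z_s^2$ is generated by $\tau_x=\rho_0\rho_1\rho_2\rho_1$ (translation by $(1,0)$) and $\tau_y=\rho_1\tau_x\rho_1$, and the point group $D_4=\langle\rho_1,\rho_2\rangle$ is the vertex stabilizer. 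The degenerate small cases $s=1,2$ (where the map is not vertex-describable) I would dispatch by hand; note that $s=2$ is even, so the claim predicts external self-duality there.

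Next I would identify the self-duality concretely. The reflection $g$ of the plane across the line $x+y=\tfrac12$, namely $g\colon(x,y)\mapsto(\tfrac12-y,\tfrac12-x)$, satisfies $g\rho_0g=\rho_2$, $g\rho_1g=\rho_1$, and $g\rho_2g=\rho_0$, so conjugation by $g$ realizes exactly the duality automorphism $\rho_i\mapsto\rho_{2-i}$. Crucially, $g$ carries the vertex lattice $\Z^2$ onto the face-center lattice $(\tfrac12,\tfrac12)+\Z^2$, so $g\notin\G$: it is a genuine duality, not a symmetry. Since $g$ preserves $\Lambda_s$, it descends to the torus. Hence $\{4,4\}_{(s,0)}$ is internally self-dual precisely when this duality is inner, i.e.\ when there is $\alpha\in\G$ with $\alpha\rho_i\alpha^{-1}=\rho_{2-i}$ for all $i$; equivalently, when $z:=g^{-1}\alpha$ centralizes $\G$ (modulo $\Lambda_s$) for some $\alpha\in\G$.

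The heart of the argument is then a short computation. Because $z$ must centralize the translation subgroup $T$, its linear part acts trivially on $\Z_s^2$, forcing $z$ to be a pure translation $\tau_w$; and commuting with $\rho_0,\rho_1,\rho_2$ forces $w_1\equiv w_2$ and $2w_1\equiv 0\pmod{s}$ (i.e.\ $R_iw-w\in s\Z^2$ for each linear part $R_i$). On the other hand, for $\alpha=g\tau_w$ to lie in $\G$—that is, to preserve $\Z^2$—its translation part $(\tfrac12-w_2,\tfrac12-w_1)$ must be integral, forcing $w_1,w_2\in\tfrac12+\Z$. Writing $w=(a+\tfrac12,a+\tfrac12)$ to meet $w_1\equiv w_2$, the remaining condition becomes
\[ 2a+1\equiv 0 \pmod{s}. \]
Since $2a+1$ is odd, this is solvable in $a$ exactly when $s$ is odd (where $2$ is invertible mod $s$), and never when $s$ is even. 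This yields the dualizing $\alpha$ for odd $s$ and rules it out for even $s$, proving the proposition.

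The step I expect to be the main obstacle is the centralizer analysis: one must argue carefully, working modulo $\Lambda_s$ rather than in the infinite plane, that the only isometries of the torus centralizing all three $\rho_i$ are translations by vectors fixed $\pmod{s}$ by the point group, and that no extra ``hidden'' central elements arise from the finiteness of the quotient. I would also want to double-check the realization and the claim $g\notin\G$, so that the genuine dichotomy between internal and external self-duality is faithfully captured by whether $g$ can be corrected by an element of $\G$. As a sanity check one can verify against the atlas data that $\{4,4\}_{(3,0)}$ (with $72$ flags) is internally self-dual, consistent with the odd case.
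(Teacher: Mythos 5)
Your proposal is correct, and it reaches the same arithmetic core as the paper (solvability of $2x\equiv 1\pmod s$), but by a genuinely different route. The paper works at the level of flags: it invokes Corollary~\ref{fixed-vertices} for even $s$ (the reflection $\rho_0\colon(x,y)\mapsto(1-x,y)$ fixes no vertex) and, for odd $s=2k-1$, runs Algorithm~\ref{dual-flag-proc} to exhibit the dual flag explicitly, starting from the unique vertex $(k,k)$ fixed by $\langle\rho_0,\rho_1\rangle$. You instead work in the extended isometry group of the torus: you realize the duality as the concrete reflection $g\colon(x,y)\mapsto(\tfrac12-y,\tfrac12-x)$, reduce internal self-duality to whether $g$ differs from an element of $\G$ by something centralizing $\G$, and solve the resulting congruences. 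Your centralizer worry is easily discharged: if $z$ centralizes the translation subgroup then its linear part $R\in D_4\subset\mathrm{GL}_2(\Z)$ satisfies $R\equiv I\pmod s$, which for $s\geq 3$ forces $R=I$ since the entries of $R$ lie in $\{0,\pm1\}$; only $s=2$ (and the degenerate $s=1$) needs the separate check you already plan. What your approach buys is an explicit dualizing automorphism $\alpha=g\tau_w$ with $w=(s/2,s/2)$ (sending the base vertex to $(\tfrac{s+1}2,\tfrac{s+1}2)$, matching the paper's dual flag) and a transparent explanation of the obstruction for even $s$ as the non-solvability of $2a+1\equiv 0\pmod s$; what the paper's approach buys is brevity, since the dual-flag machinery (Proposition~\ref{find-dual-flag} and Corollary~\ref{fixed-vertices}) has already been set up and does all the group-theoretic work for you.
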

	
	\begin{proof}
	Let $\calP = \{4, 4\}_{(s, 0)}$, and let us identify the vertices of $\calP$ with
	$(\Z/s\Z)^2$. Let us choose the base flag to consist of the origin, the edge from the origin to $(1, 0)$,
	and the square $[0,1] \times [0,1]$. Then $\rho_0$ sends each vertex $(x, y)$ to $(1-x, y)$,
	$\rho_1$ sends $(x, y)$ to $(y, x)$, and $\rho_2$ sends $(x, y)$ to $(x, -y)$. 
	If $s$ is even, then $\rho_0$ does not fix any vertex, and so by Corollary~\ref{fixed-vertices}, 
	$\calP$ is externally self-dual. If $s$ is odd, say $s = 2k-1$, then the unique vertex fixed by $\langle 
	\rho_0, \rho_1 \rangle$ is $(k, k)$. Continuing with Algorithm~\ref{dual-flag-proc}, we want an
	edge that contains $(k, k)$ and $(k, k) \rho_2 = (k, -k) = (k, k+1)$. Finally, we want a square
	that contains that edge and its images under $\langle \rho_1, \rho_2 \rangle$, which consists of the 4
	edges bounding the square with corners $(k, k)$ and $(k+1, k+1)$. Thus, the flag that
	is dual to the base flag consists of the vertex $(k, k)$, the edge to $(k, k+1)$, and
	the square that also includes $(k+1, k+1)$. See Figure~\ref{4450}.
	\end{proof}
	
	\begin{figure}[h] 
$$\includegraphics[scale=.5]{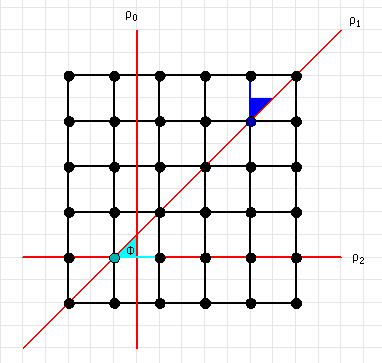}$$
\caption{Base and dual flags in the regular polyhedron $\{4,4\}_{(5,0)}$ \label{4450}}
\end{figure}

	\comment{
		First, let us determine which edges are fixed by $\langle
		\rho_0, \rho_2 \rangle$. The reflection $\rho_0$ sends each vertex $(a, b)$ to $(1-a, b)$, and
		$\rho_2$ sends vertex $(a, b)$ to $(a, -b)$. First, consider a horizontal edge
		from $(a, b)$ to $(a+1, b)$. This edge is fixed by $\rho_2$ if and only if $b \equiv -b$ (mod $n$) (in
		which case $\rho_2$ fixes both endpoints). The reflection $\rho_0$ fixes this edge if and only
		if it interchanges its endpoints, which happens if and only if $a+1 \equiv 1-a$ (mod $n$);
		in other words, if and only if $a \equiv -a$ (mod $n$).
		If $n$ is odd, then the only horizontal edge fixed by $\langle \rho_0, \rho_2 \rangle$ is
		the base edge, whereas if $n$ is even, then the edge from $(n/2, n/2)$ to $(1 + n/2, n/2)$
		is also fixed.
				
		Now consider a vertical edge from $(a, b)$ to $(a, b+1)$. This edge is fixed by $\rho_2$ if and only if
		it interchanges the endpoints; in other words, if and only if $b+1 \equiv -b$ (mod $n$).
		It is fixed by $\rho_0$ if and only if $\rho_0$ fixes both endpoints; in other words,
		if and only if $a \equiv 1-a$ (mod $n$). Thus, if $n$ is even, then no vertical edge is
		fixed by $\langle \rho_0, \rho_2 \rangle$, and if $n$ is odd, then the edge from
		$((n+1)/2, (n-1)/2)$ to $((n+1)/2, (n+1)/2)$ is fixed.
			
		If the base flag has a dual, then the vertex and face of that dual are also both
		fixed by $\rho_1$, which is a reflection in the line $y = x$. We find that if $n$ is even,
		the only choice for a dual flag consists of the vertex $(n/2, n/2)$, the edge from
		that vertex to $(1+n/2, n/2)$, and the square with opposite corner $(1+n/2, 1+n/2)$.
		But $\rho_2$ fixes the vertex $(n/2, n/2)$ instead of interchanging the endpoints
		of the edge, so there are no dual flags. 
		If $n$ is odd, then the only choice for a dual flag consists of the vertex $((n+1)/2, (n+1)/2)$,
		the edge connecting that vertex to $((n+1)/2, (n-1)/2)$, and the square with opposite
		vertex $((n-1)/2, (n-1)/2)$. We note that $\rho_2$ interchanges the endpoints of the edge,
		and $\rho_0$ interchanges the two faces incident on the edge, and so this really is
		dual to the base flag.
		
		\begin{proposition}
		Let $\calP = \{4, 4\}$, the tiling of the plane by squares. Let us identify the vertices with
		$\Z^2$, and choose our base flag to consist of the origin, the edge from the origin to $(1, 0)$,
		and the square $[0,1] \times [0,1]$. Then $\rho_0$ is a reflection in the line $x = 1/2$,
		and $\rho_2$ is a reflection in the $x$-axis. Since $\rho_0$ does not fix any vertices,
		$\calP$ must be externally self-dual.
		\end{proposition}

		}

	We now construct a family of examples to cover the remaining cases.
	
	\begin{theorem}
	\label{infinite-even}
	For each even $p \geq 6$, there are infinitely many internally self-dual polyhedra of type $\{p, p\}$.
	\end{theorem}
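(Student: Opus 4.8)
The plan is to produce, for each even $p \ge 6$, an explicit family of permutation representation graphs $\mathcal{X}_1, \mathcal{X}_2, \ldots$, generalizing the even-$p$ graph of Theorem~\ref{all-p-example}, and to show that each $\mathcal{X}_k$ is a CPR graph of an internally self-dual polyhedron of type $\{p,p\}$ whose size grows with $k$. The essential reason this should work precisely for even $p$ is that if $\rho_0\rho_1$ is a permutation whose cycle lengths all divide $p$, then appending extra $2$-cycles does not change its order, since $2 \mid p$; thus we can repeatedly splice in additional gadgets (pendant double-edges labelled $0$ and $2$, joined by $1$-edges) that enlarge the group while leaving the orders of $\rho_0\rho_1$ and $\rho_1\rho_2$ equal to $p$. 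Concretely, I would retain the distinguished long $p$-cycle of $\rho_0\rho_1$ that forces the type, and attach $k$ copies of a repeating block, each contributing one transposition to each of $\rho_0\rho_1$ and $\rho_1\rho_2$.

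For each $\mathcal{X}_k$ there are four things to check. First, that $\langle \rho_0,\rho_1,\rho_2\rangle$ is a string group generated by involutions: as in Theorem~\ref{all-p-example}, this is immediate from inspecting the subgraph on the $0$- and $2$-edges, whose components are isolated vertices, double edges, and alternating squares. Second, the type: by the cycle-length remark above, $\rho_0\rho_1$ and $\rho_1\rho_2$ have order exactly $p$. Third, internal self-duality: I would build each $\mathcal{X}_k$ with a reflective symmetry interchanging the $0$- and $2$-labels that is realized inside the group by an explicit even permutation, exactly as $(\rho_0\rho_2\rho_1)^6 = (2,5)(3,4)$ served as a dualizing automorphism in Theorem~\ref{all-p-example}; verifying $\alpha \rho_i = \rho_{2-i}\alpha$ on each vertex is then a finite check that is uniform in $k$. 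Fourth, and most delicate, the intersection condition $\langle \rho_0,\rho_1\rangle \cap \langle \rho_1,\rho_2\rangle = \langle \rho_1\rangle$: following the proof of Theorem~\ref{all-p-example}, I would write the two long cycles of $\rho_0\rho_1$ and $\rho_2\rho_1$ explicitly and identify a unique maximal run of points on which they agree, so that Lemma~\ref{distinct-cycles} rules out any nontrivial relation $(\rho_0\rho_1)^{d_1} = (\rho_2\rho_1)^{d_2}$, with the parity case $(\rho_0\rho_1)^{p/2} = \rho_1(\rho_2\rho_1)^{d_2}$ handled by tracking a single point as in the even case there.

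Finally, distinctness: since the number of vertices of $\mathcal{X}_k$ grows with $k$, the resulting groups have unbounded order, so infinitely many pairwise non-isomorphic polyhedra arise. The step I expect to be the main obstacle is the uniform verification of the intersection condition — both choosing the gadget so that the splicing does not cause the group to collapse (so that $\mathcal{X}_{k+1}$ really is larger than $\mathcal{X}_k$) and so that a single dualizing word continues to work for every $k$. As a cross-check, once a common dualizing word is available for the whole family, Theorem~\ref{infinite-mix} together with Proposition~\ref{int-self-dual-mix} gives an independent way to manufacture arbitrarily large internally self-dual polyhedra of type $\{p,p\}$ by mixing distinct members of the family, confirming that infinitely many exist.
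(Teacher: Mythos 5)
Your proposal is a plan rather than a proof, and the step it defers is exactly the one that carries all the difficulty. You never exhibit the graphs $\mathcal{X}_k$, and the two properties you assert in passing --- that a \emph{single} dualizing word works uniformly in $k$, and that Lemma~\ref{distinct-cycles} still applies --- are both problematic for the kind of splicing you describe. Once you attach gadgets contributing extra transpositions, $\rho_0\rho_1$ and $\rho_2\rho_1$ are no longer single cycles, so Lemma~\ref{distinct-cycles} does not apply as stated; you would have to handle the action on the gadget points separately. More seriously, a label-swapping symmetry of the graph gives at best an \emph{outer} duality; internal self-duality requires that this symmetry be induced by conjugation by an element of $\G=\langle\rho_0,\rho_1,\rho_2\rangle$ itself, and nothing in your description explains why some word in the generators would realize the reflection of an arbitrarily long spliced chain. (That $(\rho_0\rho_2\rho_1)^6=(2,5)(3,4)$ is an even permutation is beside the point: $\G$ is in general a proper subgroup of the symmetric group on the points, so parity is not the membership criterion.) In the paper's actual construction the dualizing element exhibited is $\sigma^k=(\rho_0\rho_2\rho_1)^{k(p-4)}$, which depends on $k$; the paper exploits precisely this dependence, recovering $k$ as the least power of $\sigma$ that is dualizing, to prove the polyhedra are pairwise distinct. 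Your fallback distinctness argument (``the number of vertices grows, so the group order grows'') also needs the observation that the graphs are connected, hence the actions transitive, before the vertex count bounds the group order from below.

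The paper's route is genuinely different from linear splicing: it takes $N=k(p-4)$ copies ($k$ odd) of a fixed $(2p-6)$-point block containing a $0,2$-labelled ``diamond,'' arranges them cyclically joined by $1$-edges, invokes \cite{DanielCPR} for polytopality, reads off the type $\{p,p\}$ from the components after deleting one label, and shows that $\sigma^k$ acts as the reflection through the middle of the diagram. Finally, your closing ``cross-check'' via Theorem~\ref{infinite-mix} and Proposition~\ref{int-self-dual-mix} is circular: mixing members of a finite family produces only finitely many polyhedra, so mixing can only confirm infinitude if you already possess an infinite family with a common dualizing word --- which is what was to be proved, and which the paper's family does not in fact provide.
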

	
	\begin{proof}
	For each even $p \geq 6$, we will construct a family of permutation representation graphs, and show that each one is the CPR graph
	of a distinct internally self-dual polyhedron of type $\{p, p\}$. First, consider the following permutation representation graph $G$:
	\[ \xymatrix {
	&	&	&	&	&	*+[o][F]{_{p-3}} \ar@{-}[dl]_2 \ar@{-}[dr]^0  & & & & & \\
	*+[o][F]{1} \ar@{-}[r]^{1} & *+[o][F]{2} \ar@{-}[r]^{0} & *+[o][F]{3} \ar@{-}[r]^{1} & *+[o][F]{4} \ar@{.}[r] & 
		*+[o][F]{_{p-4}} \ar@{-}[dr]_{0} & & *+[o][F]{_{p-1}} \ar@{-}[r]^{1} & *+[o][F]{_p} \ar@{.}[r] & *+[o][F]{_{2p-8}} \ar@{-}[r]^{2} & *+[o][F]{_{2p-7}} \ar@{-}[r]^{1} & *+[o][F]{_{2p-6}} \\
	&	&	&	&	&	*+[o][F]{_{p-2}} \ar@{-}[ur]_2 & & & & & \\
	}. \]
	Let $\rho_0, \rho_1, \rho_2$ be the permutations induced by edges of the appropriate label. 
	We have
	\begin{align*}
	\rho_0 &= (2,3)(4,5) \cdots (p-6, p-5) (p-4, p-2) (p-3, p-1), \\
	\rho_1 &= (1,2)(3,4) \cdots (p-5, p-4) (p-1, p) (p+1, p+2) \cdots (2p-7, 2p-6), \\
	\rho_2 &= (p-4, p-3) (p-2, p-1) (p, p+1)(p+2, p+3) \cdots (2p-8, 2p-7).
	\end{align*}
	It is again clear that $\langle \rho_0, \rho_1, \rho_2 \rangle$ is a string group generated by involutions. Next, note that $\rho_0 \rho_2 \rho_1$ interchanges $p-2$ with $p-3$, while cyclically permuting the remaining
	$2p-8$ points. Now consider $\sigma = (\rho_0 \rho_2 \rho_1)^{p-4}$. This interchanges $1$ with $2p-6$
	and thus $1 (\rho_0 \rho_2 \rho_1)^j$ with $(2p-6) (\rho_0 \rho_2 \rho_1)^j$, for each $j$. 
	Since the action of $(\rho_0 \rho_2 \rho_1)$ on vertices on the left is the mirror of the action on the right,
	it follows that $\sigma$ interchanges $i$ with $2p-5-i$ for $1 \leq i \leq p-4$, while fixing $p-2$ and $p-3$.
	
	\comment{
		Consider the permutation $\rho_0 \rho_2 \rho_1$. First of all, this permutation interchanges $p-3$ and
		$p-2$. It sends $p-4$ to $p$,
		and it sends $p-1$ to $p-5$. It sends 1 to 2 and $2p-6$ to $2p-7$. It sends every other even
		vertex $n$ to $n+2$ and every other odd vertex $n$ to $n-2$.
		
		Putting this all together, we note that repeated application of $\rho_0 \rho_2 \rho_1$ takes 1 to 2, then through
		all of the even vertices except for $p-2$, then from $2p-6$ to $2p-7$, then backwards
		through all of the odd vertices except for $p-3$, before coming back to 1.
		So $\rho_0 \rho_2 \rho_1$ is the product of a $(2p-8)$-cycle and a transposition.
	}
	
	We will now build a larger graph $\mathcal{X}$ using the one above as a building block. Let $k$ be an odd positive
	integer, and take $N := k(p-4)$ copies of the above graph, labeled $G_1, G_2, \ldots, G_{N}$,
	and arrange them cyclically. Let us use $(i, j)$ to mean vertex $i$ in $G_j$ (and where
	$j$ is considered modulo $N$ if necessary).
	We connect the graphs $G_j$ by adding edges labeled 1 from $(p-2, j)$ to $(p-3, j+1)$
	By Theorem 4.5 in~\cite{DanielCPR}, this is the CPR graph of a polyhedron,. Furthermore, if we erase the edges labeled 2, then the connected
	components either have 2 vertices or $p$ vertices. The latter consists of 
	the first $p-4$ vertices, then the bottom of a diamond, then the top of the next
	diamond, the right of that diamond, and one more vertex. The same happens if we erase the edges
	labeled 0, and so we get a polyhedron of type $\{p, p\}$.

	Let us now redefine $\rho_0, \rho_1,$ and $\rho_2$ as the permutations induced by edges of $\mathcal{X}$, and let
	$\sigma = (\rho_0 \rho_2 \rho_1)^{p-4}$ as before. The new $\sigma$ acts in exactly the same
	way on every vertex in every copy of the original CPR graph except for the top and bottom
	of the diamonds. Indeed, $\sigma$ takes $(p-3, j)$ to $(p-3, j-p+4)$ and it takes
	$(p-2, j)$ to $(p-2, j+p-4)$. Then the order of $\sigma$ is $2k$, since $\sigma$ to any
	odd power interchanges every $(1, j)$ with $(2p-6, j)$, and 
	$\sigma^k$ is the smallest power of $\sigma$ that fixes every $(p-3, j)$ and $(p-2, j)$.
	
	We claim that $\sigma^k$ is dualizing. To prove that, we need to show that $\rho_i \sigma^k = \sigma^k \rho_{2-i}$
	for $i = 0, 1, 2$. That is clearly true for all vertices other than the tops and bottoms
	of diamonds, because $\sigma^k$ acts as a reflection through the middle of the diagram, and
	this reflection also dualizes every label. Checking that $\rho_i \sigma_k$ and $\sigma^k \rho_{2-i}$
	act the same on the top and bottom of every diamond is then easy.
	
	Finally, we claim that the constructed polyhedra are distinct for each $k$. For this, it suffices
	to show that in each polyhedron, $k$ is the smallest positive integer such that $\sigma^k$ is dualizing.
	(In principle, if $\sigma^k$ is dualizing, it might also be true that $\sigma^m$ is dualizing for
	some $m$ dividing $k$.)
	In order for a power of $\sigma$ to dualize most vertices, it must be odd, since $\sigma^2$ fixes
	every vertex other than the tops and bottoms of diamonds. So consider $\sigma^m$ for some odd $m < k$.
	The permutation $\sigma^m \rho_1$ sends $(p-3, 1)$ to $(p-2, m(p-4))$,
	whereas $\rho_1 \sigma^m$ sends $(p-3, 1)$ to $(p-2, N-m(p-4)) = (p-2, (k-m)(p-4))$. 
	In order for these two points to be the same, we need $k-m = m$, so that $k = 2m$.
	But $k$ is odd, so this is impossible. So we get infinitely many internally self-dual polyhedra
	of type $\{p, p\}$.
	\end{proof}

	We can now prove our main result.
	
	\begin{proof}[Proof of Theorem~\ref{main-thm}]
	The data from \cite{conder-atlas}, combined with Theorem~\ref{all-p-example} and Corollary~\ref{infinite-example}, show that there are internally self-dual polyhedra of type $\{p, p\}$ for all $3 \leq p \leq \infty$. Theorem~\ref{universal}
	then shows that there are externally self-dual polyhedra of type $\{p, p\}$ for $4 \leq p \leq \infty$. 
	Proposition~\ref{torus-map} and Theorem~\ref{infinite-even} show that there are infinitely many internally
	self-dual polyhedra of type $\{p, p\}$ for $p$ even, and combining with Corollary~\ref{even-type}, we get
	infinitely many externally self-dual polyhedra of type $\{p, p\}$ for $p$ even as well.
	\end{proof}
	
\subsection{Higher ranks}

	Now that the rank three case is well established, let us consider internally self-dual regular polytopes of higher ranks.
	We will start by showing that there are infinitely many internally self-dual polytopes in every
	rank. By Theorem~\ref{universal}, we already know that the $n$-simplex is internally self-dual.
	It is instructive to actually show this constructively, using Algorithm~\ref{dual-flag-proc}.

	Consider the representation of the regular $n$-simplex $\calP$ as the convex hull of the
	points $e_1, e_2, \ldots, e_{n+1}$ in ${\mathbb R}^{n+1}$. Each $i$-face of $\calP$ is the convex
	hull of $i+1$ of the vertices, and each flag of $\calP$ can be associated to an
	ordering of the vertices $(e_{i_1}, e_{i_2}, \ldots, e_{i_{n+1}})$, where the
	$i$-face of the flag is the convex hull of the first $i+1$ vertices.
	
	Let us set the base flag $\Phi$ to be the flag corresponding to the ordering 
	$(e_1, e_2, \ldots, e_{n+1})$. Then each automorphism $\rho_i$ acts by switching
	coordinates $i+1$ and $i+2$, corresponding to a reflection in the plane $x_{i+1} = x_{i+2}$.
	In order to find a flag dual to $\Phi$, we need to first find a vertex that is
	fixed by $\langle \rho_0, \ldots, \rho_{n-2} \rangle$. The only such vertex is
	$e_{n+1}$. Next, we need an edge that is incident to $e_{n+1}$ and fixed
	by $\langle \rho_0, \ldots, \rho_{n-3} \rangle \times \langle \rho_{n-1} \rangle$.
	Since $\rho_{n-1}$ interchanges $e_n$ and $e_{n+1}$ and it must fix the edge,
	it follows that the edge must be incident on $e_n$. Continuing in this manner,
	it is easy to see that the dual flag must be $(e_{n+1}, e_n, \ldots, e_1)$.
	
	Now let us find the dualizing automorphism of $\G(\calP)$. We can identify $\G(\calP)$
	with the symmetric group on $n+1$ points, where $\rho_i$ is the transposition
	$(i+1, i+2)$. We noted above that the dual of the base flag simply reversed
	the order of the vertices. So the dualizing automorphism of $\G(\calP)$ can be
	written as
	\[ (\rho_0 \rho_1 \cdots \rho_{n-1}) (\rho_0 \rho_1 \cdots \rho_{n-2}) \cdots (\rho_0 \rho_1) (\rho_0), \]
	as this ``bubble-sorts'' the list $(1, 2, \ldots, n+1)$ into its reverse.

	Here is another example of high-rank internally self-dual polytopes. That they are internally self-dual
	follows from Proposition~\ref{symmetric}.

	\begin{proposition}
	For each $n \geq 5$ the regular $n$-polytope with group $S_{n+3}$ described in \cite[Proposition 4.10]{Extension_n-4}, is internally self dual.
	\end{proposition}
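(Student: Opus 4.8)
The plan is to reduce the statement to Proposition~\ref{symmetric}. Because $n \geq 5$, the automorphism group $S_{n+3}$ has degree $n+3 \geq 8$, so it is certainly not isomorphic to $S_6$; consequently $S_{n+3}$ has only inner automorphisms, and the exceptional cases excluded in Proposition~\ref{symmetric} cannot arise. Thus, as soon as one knows that the polytope $\calP$ of \cite[Proposition 4.10]{Extension_n-4} is self-dual, Proposition~\ref{symmetric} delivers internal self-duality for free. The entire content of the argument is therefore the verification of self-duality (which may already be recorded in the cited construction, in which case nothing beyond the one-line reduction is needed).

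To prove self-duality directly, I would work with the permutation generators $\rho_0, \ldots, \rho_{n-1}$ of $S_{n+3}$ given in \cite[Proposition 4.10]{Extension_n-4}, or equivalently with the associated CPR graph $\mathcal{X}$ on $n+3$ vertices. Recall that $\calP$ is self-dual precisely when the assignment $\rho_i \mapsto \rho_{n-i-1}$ extends to a group automorphism of $\G(\calP)$. The most transparent way to produce such an automorphism is to exhibit a symmetry of $\mathcal{X}$ that reverses the edge labels, that is, a relabeling of the vertices carrying each $i$-edge to an $(n-i-1)$-edge while preserving the rest of the incidence structure. Any such graph symmetry is realized by conjugation by a single permutation $\alpha \in S_{n+3}$, and that conjugation sends $\rho_i$ to $\rho_{n-i-1}$; this at once proves self-duality and, since the resulting dualizing automorphism is inner, already exhibits a dualizing automorphism.

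The main obstacle is locating this label-reversing symmetry in the specific graph of \cite[Proposition 4.10]{Extension_n-4}. This requires more than checking that the Schl\"afli type is palindromic, $p_i = p_{n-i}$ for all $i$: one must confirm that the whole pattern of edges, including how the ``extra'' points beyond the simplex are attached, is invariant under the reversal $i \mapsto n-i-1$. Once the generators are written out explicitly this is a finite and essentially mechanical check, and after it is done the conclusion is immediate --- either directly from the inner dualizing automorphism just constructed, or, as stated, by combining self-duality with Proposition~\ref{symmetric}.
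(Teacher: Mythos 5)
Your proposal matches the paper's argument: the paper likewise disposes of the proposition by invoking Proposition~\ref{symmetric} (since $S_{n+3}$ with $n+3\geq 8$ has no outer automorphisms), with self-duality visible from the label-reversing symmetry of the displayed CPR graph, whose label sequence $1,0,1,2,\ldots,n-2,n-1,n-2$ is palindromic under $i\mapsto n-1-i$. Your additional observation that the graph symmetry itself is a permutation of the $n+3$ points and hence already an inner dualizing element is a correct and slightly more self-contained variant of the same idea.
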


	$$ \xymatrix@-1pc{ *+[o][F]{}   \ar@{-}[r]^1  & *+[o][F]{}  \ar@{-}[r]^0    &  *+[o][F]{}   \ar@{-}[r]^1  & *+[o][F]{}  \ar@{-}[r]^2 & *+[o][F]{}  \ar@{-}[r]^3 & *+[o][F]{}  \ar@{-}[r]^4  & *+[o][F]{} \ar@{.}[r] & *+[o][F]{}  \ar@{-}[r]^{n-4} & *+[o][F]{}  \ar@{-}[r]^{n-3} & *+[o][F]{}  \ar@{-}[r]^{n-2} & *+[o][F]{}  \ar@{-}[r]^{n-1} & *+[o][F]{}  \ar@{-}[r]^{n-2} & *+[o][F]{}}$$

	The CPR graph of these polytopes is shown above. Each of this polytopes is obtained from a simplex, by first taking the Petrie contraction and then dualizing and taking another Petrie contraction.   (The Petrie contraction of a string group generated by involutions $\langle \rho_0, \rho_1, \rho_2, \ldots, \rho_{n-1} \rangle$ is the group generated by $\langle \rho_1, \rho_0 \rho_2, \rho_3, \ldots, \rho_{n-1} \rangle$.)

	\comment{
	
		\begin{lemma}
		Suppose $\calP$ is a regular self-dual polytope with base flag $\Phi$. Then $\Psi$ is dual to $\Phi$
		if and only if, for each $i$-face $G_i$ of $\Psi$, the subgroup $\langle \rho_j \mid j \neq {n-i-1} \rangle$
		fixes $G_i$.
		\end{lemma}
			
		\begin{proof}
		The necessity is clear. If $\langle \rho_j \mid j \neq n-i-1 \rangle$ fixes $G_i$, then
		$\rho_{n-i-1}$ cannot also fix $G_i$, because then $G_i$ would be fixed by every automorphism.
		So $\rho_{n-i-1}$ moves $G_i$ while fixing every other face of $\Psi$, which shows that
		$\Psi \rho_{n-i-1} = \Psi^i$.
		\end{proof}

		{\bf Am I using that lemma? Or is that just another way of putting the "building a dual flag" construction
		in the last section?}

		{\bf No good reason to put it in this section.  Right now the algorithm only is written to guarantee dual flags for vertex describable polytopes.  }
		
	}

	The cubic toroids (described in \cite[Section 6D]{arp}) provide an infinite family of internally self-dual polytopes
	with automorphism groups other than the symmetric group.
	
	\begin{theorem}
	The regular $n+1$-polytope $\{4, 3^{n-2}, 4\}_{(s, 0^{n-1})}$ is internally self-dual if and only if $s$ is odd.
	\end{theorem}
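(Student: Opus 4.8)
The plan is to generalize the coordinate argument used for Proposition~\ref{torus-map} (the case $n=2$). First I would realize $\calP = \{4, 3^{n-2}, 4\}_{(s,0^{n-1})}$ as the quotient of the cubic honeycomb in $\mathbb{R}^n$ by the lattice $s\Z^n$, identifying the vertices with $(\Z/s\Z)^n$ and taking as base flag the one consisting of the origin, the edge to $e_1$, and the subcubes $[0,1]^i$ on the first $i$ coordinates. With this base flag the distinguished generators act by $\rho_0 \colon (x_1,\dots,x_n)\mapsto(1-x_1,x_2,\dots,x_n)$; $\rho_i$ (for $1\le i\le n-1$) the transposition of the coordinates $x_i,x_{i+1}$; and $\rho_n \colon (x_1,\dots,x_n)\mapsto(x_1,\dots,x_{n-1},-x_n)$. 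These toroids are regular, self-dual, and vertex-describable for $s\ge 3$ (see \cite[Sec.~6D]{arp}), so I may apply Corollary~\ref{fixed-vertices} and Proposition~\ref{find-dual-flag}.

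For the ``only if'' direction I would handle even $s$. The facet stabilizer $\langle \rho_0,\dots,\rho_{n-1}\rangle$ is generated by the full symmetric group on the coordinates (from $\rho_1,\dots,\rho_{n-1}$) together with $\rho_0$; conjugating $\rho_0$ by coordinate permutations shows this subgroup contains $x_j \mapsto 1-x_j$ for every $j$. A vertex fixed by this subgroup must therefore have all coordinates equal, say to $a$, and satisfy $2a \equiv 1 \pmod s$. When $s$ is even this congruence has no solution, so no vertex is fixed, and Corollary~\ref{fixed-vertices} gives that $\calP$ is externally self-dual.

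For the ``if'' direction, write $s=2k-1$, so that $2k\equiv 1 \pmod s$ and the unique vertex fixed by $\langle\rho_0,\dots,\rho_{n-1}\rangle$ is $c=(k,\dots,k)$, the starting vertex of the algorithm. I would then claim the dual flag is the flag of the ``central'' unit cube $C$ with corner $c$ running in the negative coordinate directions; concretely $F_i' = \{\,(k,\dots,k,x_{n-i+1},\dots,x_n) : x_j \in \{k-1,k\}\,\}$, the $i$-face of $C$ at the corner $c$ spanned by the last $i$ directions. By the characterization of dual flags preceding Algorithm~\ref{dual-flag-proc} (together with Proposition~\ref{int-sd-iff-dual-flag}), it suffices to check that $F_i'$ is fixed by $\langle \rho_\ell : \ell \ne n-i\rangle$ and moved by $\rho_{n-i}$. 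The arithmetic fact making this work is that, since $s=2k-1$ is odd, $1-k\equiv k$ and $-(k-1)\equiv k$, $-k\equiv k-1 \pmod s$; thus $\rho_0$ fixes the value $k$, while $\rho_n$ interchanges the two central layers $x_n=k-1$ and $x_n=k$. Consequently the generators with $\ell\neq n-i$ either permute the first $n-i$ coordinates (all equal to $k$), or permute and sign-change the last $i$ coordinates (preserving the set $\{k-1,k\}$), so each fixes $F_i'$ setwise; since $C$ is an embedded cube, its faces are determined by their vertices, and $F_i'$ is fixed as a face. Finally $\rho_{n-i}$ swaps coordinate $n-i$ (pinned at $k$) with the varying coordinate $n-i+1$, so it moves $F_i'$. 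Hence $\Psi=(F_0',\dots,F_n')$ is dual to the base flag and $\calP$ is internally self-dual.

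The main obstacle is this ``if'' direction: pinning down the correct candidate dual flag and verifying the fixing conditions uniformly in $n$. The conceptual heart is recognizing that for odd $s$ the torus has a genuine ``deep hole'' at its center $(s/2,\dots,s/2)$ that is fixed by the entire point group $\langle\rho_1,\dots,\rho_n\rangle$ --- this is exactly why the reflections fix the central cube setwise rather than merely permuting the cubes around the origin (the latter behaviour being what forces the infinite honeycomb, and the even-$s$ toroids, to be only externally self-dual). Some care is also needed to confirm that $C$ is embedded (equivalently, vertex-describability) for the smallest relevant value $s=3$.
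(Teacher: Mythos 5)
Your proposal is correct and follows essentially the same route as the paper: the same coordinatization of the vertices as $(\Z/s\Z)^n$ with the same explicit generators, the same use of Corollary~\ref{fixed-vertices} to dispose of even $s$ via the congruence $2x_1 \equiv 1 \pmod s$, and the same dual flag built from the central vertex $((s+1)/2,\dots,(s+1)/2)$ by sign changes (equivalently, your $\{k-1,k\}$ layers) in the last $i$ coordinates. The extra care you take with vertex-describability and the explicit fixing checks only fleshes out details the paper leaves implicit.
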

		
	\begin{proof}
	Let us take the vertex set of $\calP$ to be $(\Z/s\Z)^n$. For $0 \leq i \leq n$, let $G_i$ be the $i$-face of $\calP$ 
	containing vertices ${\bf 0}, e_1, e_1 + e_2, \ldots, e_1 + e_2 + \cdots + e_i$, where 
	$\{e_i\}_{i = 1}^{n}$ is the standard basis. Let $\Phi = (G_0, \ldots, G_{n})$ be our base flag.
	Then the generators of $\G(\calP)$ can be described geometrically as follows (taken from \cite[Eq. 6D2]{arp}).
	$\rho_0$ sends $(x_1, \ldots, x_n)$ to
	$(1-x_1, x_2, \ldots, x_n)$, $\rho_n$ sends $(x_1, \ldots, x_n)$ to $(x_1, \ldots, x_{n-1}, -x_n)$, and
	for $1 \leq i \leq n-1$, $\rho_i$ interchanges $x_i$ and $x_{i+1}$.
		
	We now try to build a flag that is dual to $\Phi$, using Algorithm~\ref{dual-flag-proc}.
	First, we need to find a vertex that is fixed by $\langle \rho_0, \ldots, \rho_{n-1} \rangle$.
	In order for $(x_1, \ldots, x_n)$ to be fixed by $\rho_0$, we need for $x_1 \equiv 1-x_1$ (mod $s$);
	in other words, $2x_1 \equiv 1$ (mod $s$). That has a solution if and only if $s$ is odd, so
	that already establishes that when $s$ is even, the polytope $\calP$ is not internally self-dual.
	On the other hand, if $s$ is odd, then we can take $x_1 = (s+1)/2$ as a solution. Now, in order
	for $\rho_1, \ldots, \rho_{n-1}$ to also fix this vertex, we need all of the coordinates to be the same.
	So we pick $F_0 = ((s+1)/2, \ldots, (s+1)/2)$.
		
	Next we need an edge incident on $F_0$ and $F_0 \rho_n$. The latter is simply $((s+1)/2, \ldots, (s+1)/2, (s-1)/2)$,
	which is indeed adjacent to $F_0$, and that gives us our edge $F_1$. To pick $F_2$, we look at the orbit
	$G_1 \langle \rho_{n-1}, \rho_n \rangle$; this gives us the square whose 4 vertices are $((s+1)/2, \ldots,
	(s+1)/2, \pm (s+1)/2, \pm (s+1)/2)$. In general, we take $F_i$ to be the $i$-face such that
	its vertices are obtained from $F_0$ by any combination of sign changes in the last $i$ coordinates.
	Then it is clear that $F_i$ is fixed by $\langle \rho_0, \ldots, \rho_{n-i-1}, \rho_{n-i+1}, \ldots, \rho_n \rangle$,
	and thus we have a dual flag to $\Phi$.
	\end{proof}

To show that there are examples of internally self-dual polytopes other than the toroids and simplices (and self-dual petrie contracted relatives) in each rank $r \geq 5$, we give the following family of examples and prove that they yield string C-groups which are internally self-dual.  (There are other examples in rank 4 as well, such as the polytope of type
$\{5, 6, 5\}$ that was mentioned at the end of Section~\ref{restrictions}.) 

\begin{lemma}
\label{n-3plus}
For each $n \geq 4$, the following permutation representation graph is the CPR graph of a regular $n$-polytope with automorphism group $\G$ isomorphic to the symmetric group on $n+3$ points.
\end{lemma}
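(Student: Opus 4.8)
The plan is to follow the same three-stage template used in the proofs of Theorem~\ref{all-p-example} and Theorem~\ref{infinite-even}: first verify that the permutations $\rho_0, \ldots, \rho_{n-1}$ read off the graph generate a string group generated by involutions; then identify the group $\G$ they generate as the symmetric group on $n+3$ points; and finally check the intersection property, so that $\G$ is a string C-group and the displayed graph is genuinely the CPR graph of a regular $n$-polytope.

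For the first stage, each $\rho_i$ is the product of the disjoint transpositions coming from its $i$-edges, hence an involution read directly off the graph. The string relations $(\rho_i \rho_j)^2 = \eps$ for $|i-j| \geq 2$ reduce to the local observation that $i$-edges and $j$-edges meet only in the harmless configurations (isolated vertices, double edges, or alternating-label squares), exactly as in the earlier proofs; this is a finite inspection of the picture. Connectedness of the graph then gives that $\G$ acts transitively on its $n+3$ vertices.

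For the second stage, I would show $\G = S_{n+3}$ by establishing that the action is primitive and then producing a short word in the generators that acts as a single transposition. A primitive permutation group containing a transposition is the full symmetric group, since the conjugates of that transposition form a connected orbital graph and hence generate $S_{n+3}$. Primitivity itself can be checked by ruling out invariant block systems: the path-like backbone of the graph, together with the single branch where a label is repeated, typically leaves no room for a nontrivial $\G$-invariant partition. Alternatively, one may exhibit a $3$-cycle to obtain $A_{n+3}$ via Jordan's theorem and then note that some generator is an odd permutation, which upgrades this to $S_{n+3}$.

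The hard part will be the intersection property. Rather than verify~\eqref{intprop} for all pairs $I, J$ at once, I would argue by induction on the rank. The facet subgroup $\langle \rho_0, \ldots, \rho_{n-2} \rangle$ and the vertex-figure subgroup $\langle \rho_1, \ldots, \rho_{n-1} \rangle$ should each be recognizable as the group of a lower-rank member of the same family (or as a standard subgroup of $S_{n+3}$), and so they are string C-groups by the inductive hypothesis. By the criterion behind Proposition~2E16 of \cite{arp}, it then suffices to prove the single equation
\[ \langle \rho_0, \ldots, \rho_{n-2} \rangle \cap \langle \rho_1, \ldots, \rho_{n-1} \rangle = \langle \rho_1, \ldots, \rho_{n-2} \rangle. \]
The delicate point is locating these two subgroups precisely enough inside $S_{n+3}$ to compute their intersection; comparing their orders, or the sizes of their orbits on the $n+3$ points, should force the intersection to be no larger than the expected rank-$(n-1)$ subgroup, while the reverse containment is immediate. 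This order and orbit bookkeeping is where I expect the real work to lie.
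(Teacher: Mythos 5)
Your proposal and the paper's proof diverge completely in kind: the paper disposes of this lemma in two sentences, citing \cite[Proposition 4.8]{Extension_n-4} for $n \geq 6$ and checking $n = 4, 5$ by hand or computer. You instead sketch a self-contained argument. That would be a legitimate and arguably more satisfying route, but as written it is a plan rather than a proof, and each of its three stages stops exactly where the mathematical content begins.

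Concretely: (1) You never exhibit the transposition or $3$-cycle on which the second stage depends, and you assert primitivity with the phrase ``typically leaves no room for a nontrivial $\G$-invariant partition'' --- for a graph with a repeated label $2$ and a four-vertex block hanging off one end, this is precisely the kind of configuration where blocks of imprimitivity can hide, so it must actually be ruled out. (2) The inductive setup for the intersection property is not available in the form you want: deleting the $(n-1)$-edges does leave the rank-$(n-1)$ graph of the same family (plus a fixed point), but deleting the $0$-edges leaves a path labelled $2,1,2,3,\ldots,n-1$ together with an isolated $2$-edge, which is \emph{not} a lower-rank member of this family, so the vertex-figure subgroup needs its own identification (this is essentially the content of \cite[Proposition 4.10]{Extension_n-4}, quoted elsewhere in the paper). (3) The final intersection computation is deferred with ``this is where I expect the real work to lie.'' Since the lemma \emph{is} that computation, the proposal as it stands establishes nothing that the statement does not already assert. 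To make this airtight you would need to name the transposition explicitly, prove primitivity (or transitivity plus a transposition plus connectivity of the transposition graph), identify both maximal parabolic subgroups with their orders and orbit structures, and then bound the intersection by comparing orbits --- the same bookkeeping the authors of \cite{Extension_n-4} carry out.
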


\begin{center}
$ \xymatrix@-1pc{
     *+[o][F]{}   \ar@{-}[r]^2  \ar@{-}[d]_0     & *+[o][F]{}  \ar@{-}[r]^1 \ar@{-}[d]^0  & *+[o][F]{}  \ar@{-}[r]^2 & *+[o][F]{}  \ar@{-}[r]^3 & *+[o][F]{}  \ar@{-}[r]^4 & *+[o][F]{}  \ar@{..}[r] & *+[o][F]{}  \ar@{-}[r]^{n-1}& *+[o][F]{}  \\
     *+[o][F]{}  \ar@{-}[r]_2     & *+[o][F]{}    &  & & & & &  }$
  \end{center}
  
\begin{proof}  
For $n \geq 6$ this is shown in \cite[Proposition 4.8]{Extension_n-4}.  The remaining cases can either be checked by hand or using a computer algebra system.
\end{proof}

\begin{lemma}
\label{n-4plus}
For each $n \geq 6$, the following permutation representation graph is the CPR graph of a regular $n$-polytope with automorphism group $\G$ isomorphic to the symmetric group on $n+4$ points.  Furthermore, the element $(\rho_{n-2} \rho_{n-1} \rho_{n-2} \rho_{n-3} \rho_{n-2})$ is a five cycle with the last five points in its support.
\end{lemma}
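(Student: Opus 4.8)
The plan is to follow the standard three-step route for verifying that a permutation representation graph is a CPR graph with a prescribed automorphism group, and then to finish with a short explicit computation for the $5$-cycle claim. Throughout, I read $\rho_0, \ldots, \rho_{n-1}$ off the graph as the involutions induced by edges of each label, and let $\G = \langle \rho_0, \ldots, \rho_{n-1}\rangle$ act on the $n+4$ labelled points. First I would confirm that $\G$ is a string group generated by involutions. Each $\rho_i$ is an involution by construction, so the only relations to verify are $(\rho_i \rho_j)^2 = \eps$ whenever $|i-j| \geq 2$. Exactly as in the proofs of Theorem~\ref{all-p-example} and Theorem~\ref{infinite-even}, this reduces to inspecting, for each such pair $\{i,j\}$, the subgraph induced by the $i$- and $j$-labelled edges and checking that its connected components are isolated vertices, double edges, or alternating $4$-cycles; this is a routine, local inspection of the diagram.

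Second, I would identify $\G$ with $S_{n+4}$. Since $\G$ permutes $n+4$ points, it cannot exceed $S_{n+4}$, so it suffices to prove the action is everything. The natural route is induction on $n$: deleting the generator $\rho_{n-1}$ should leave the facet subgroup $\langle \rho_0, \ldots, \rho_{n-2}\rangle$ acting, after fixing one point, as the rank-$(n-1)$ instance of this same construction, hence isomorphic to $S_{n+3}$. Adjoining $\rho_{n-1}$, which moves the fixed point and makes the action transitive, yields a transitive group whose point-stabilizer already contains a copy of $S_{n+3}$; a short primitivity argument — or exhibiting enough genuine transpositions inside $\G$ to make the transposition graph on the $n+4$ points connected — then forces $\G = S_{n+4}$. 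The base cases of the induction, where the facet subgroup is too small for the recursive description, are identified directly, and Lemma~\ref{n-3plus} provides the relevant comparison group for the point-stabilizer.

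Third, to conclude that $\G$ is a string C-group — and hence that the graph is a CPR graph of a regular $n$-polytope — I would establish the intersection property. By \cite[Proposition 2E16]{arp} it is enough to check that the two end subgroups $\langle \rho_0, \ldots, \rho_{n-2}\rangle$ and $\langle \rho_1, \ldots, \rho_{n-1}\rangle$ are themselves string C-groups (which follows from the inductive identification of the previous step) and that their intersection is exactly $\langle \rho_1, \ldots, \rho_{n-2}\rangle$. Alternatively, if the construction exhibits $\G$ as a cover of a known polytope that is injective on the facet subgroup, the quotient criterion \cite[Thm. 2E17]{arp} applies directly.

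Finally, the assertion about $(\rho_{n-2}\rho_{n-1}\rho_{n-2}\rho_{n-3}\rho_{n-2})$ is a purely local computation: the generators $\rho_{n-3}, \rho_{n-2}, \rho_{n-1}$ act nontrivially only near the end of the chain, so I would simply track the images of the last five points under this word and read off that it is a $5$-cycle supported precisely on them. The main obstacle is the second step — pinning down that $\G$ is \emph{exactly} $S_{n+4}$, rather than a proper imprimitive or intransitive subgroup. As with Lemma~\ref{n-3plus}, I expect the generic case to yield to the inductive argument, paralleling \cite[Proposition 4.8]{Extension_n-4}, while the few smallest values of $n$ may require direct verification by hand or with a computer algebra system.
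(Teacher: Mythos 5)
Your overall outline (string group $\to$ identify the group $\to$ intersection property $\to$ local computation for the $5$-cycle) is the right shape, and your final step is correct: the word $\rho_{n-2}\rho_{n-1}\rho_{n-2}\rho_{n-3}\rho_{n-2}$ involves only the edges joining the last five points of the chain, and tracking images shows it is a $5$-cycle on exactly those points. Note, however, that the paper does not actually carry out your first three steps at all: its proof of Lemma~\ref{n-4plus} consists of citing \cite[Proposition 4.14]{Extension_n-4} for $n \geq 7$, checking $n=6$ by computer, and declaring the $5$-cycle computation easy. So you are attempting strictly more than the paper does, and your attempt has a concrete error at its crux.

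The gap is in your second step. Deleting the label $n-1$ from the graph does \emph{not} leave the rank-$(n-1)$ instance of the same construction fixing a point. The $(n-1)$-edge is the second-to-last edge of the chain, so removing it leaves two orbits: one of size $n+2$ (the graph of Lemma~\ref{n-3plus} in rank $n-1$, whose group is $S_{n+2}$) and one of size $2$ (an isolated $(n-2)$-edge). In particular the facet subgroup $\langle \rho_0,\ldots,\rho_{n-2}\rangle$ fixes no point, is at most $S_{n+2}\times C_2$, and is not a recursive instance of the Lemma~\ref{n-4plus} family; likewise no standard parabolic subgroup here is a point stabilizer containing $S_{n+3}$, so the "transitive group with point stabilizer $\supseteq S_{n+3}$" argument does not apply as stated. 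To identify $\G$ with $S_{n+4}$ you would need a different argument (e.g.\ transitivity plus primitivity plus a transposition, or a connected transposition graph, which is essentially what \cite{Extension_n-4} does). Relatedly, your third step defers the intersection property to "the inductive identification of the previous step," but with the facet subgroup having two orbits and a possible extra $C_2$ factor, the equality $\langle\rho_0,\ldots,\rho_{n-2}\rangle \cap \langle\rho_1,\ldots,\rho_{n-1}\rangle = \langle\rho_1,\ldots,\rho_{n-2}\rangle$ is a genuine orbit-by-orbit computation that your sketch does not supply. As written, the proposal establishes the $5$-cycle claim but not that the graph is a CPR graph with group $S_{n+4}$.
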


\begin{center}
$ \xymatrix@-1pc{
     *+[o][F]{}   \ar@{-}[r]^2  \ar@{-}[d]_0     & *+[o][F]{}  \ar@{-}[r]^1 \ar@{-}[d]^0  & *+[o][F]{}  \ar@{-}[r]^2 & *+[o][F]{}  \ar@{-}[r]^3 & *+[o][F]{}  \ar@{-}[r]^4 & *+[o][F]{}  \ar@{..}[r] & *+[o][F]{}  \ar@{-}[r]^{n-3}   & *+[o][F]{}   \ar@{-}[r]^{n-2}   & *+[o][F]{}   \ar@{-}[r]^{n-1}   & *+[o][F]{}   \ar@{-}[r]^{n-2} & *+[o][F]{}\\
     *+[o][F]{}  \ar@{-}[r]_2     & *+[o][F]{}  &&  &  & & & & & &  }$
  \end{center}
  
\begin{proof}  
For $n \geq 7$ the fact that this is a CPR graph is shown in \cite[Proposition 4.14]{Extension_n-4}.  The remaining case can either be checked by hand or using a computer algebra system.  The structure of $(\rho_{n-2} \rho_{n-1} \rho_{n-2} \rho_{n-3} \rho_{n-2})$ can easily be checked by hand.
\end{proof}

	\begin{theorem}
For each $n \geq 5$, the following permutation representation graph is the CPR graph of an internally self-dual regular $n$-polytope with automorphism group $\G$ acting on $n+5$ points.	\end{theorem}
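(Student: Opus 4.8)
The plan is to verify three things in turn: that $\G = \langle \rho_0, \ldots, \rho_{n-1} \rangle$ is a string C-group (so the picture really is the CPR graph of a polytope $\calP$), that the permutation action pins down $\G$ as a group on $n+5$ points, and finally that $\calP$ is internally self-dual. For the last step I would produce an explicit dualizing automorphism $\alpha \in \G$ with $\alpha \rho_i \alpha^{-1} = \rho_{n-i-1}$; by Proposition~\ref{int-sd-iff-dual-flag} this is the same as exhibiting a flag dual to the base flag, and I would read both off the evident mirror symmetry of the diagram.

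First I would confirm that $\G$ is a string group generated by involutions. Each generator is an involution by construction, so the only relations needing attention are $(\rho_i \rho_j)^2 = \eps$ for $|i-j| \geq 2$; these are visible from the graph, since non-consecutive labels meet only in disjoint edges (equivalently, the subgraph on any two far-apart labels is a union of isolated vertices and alternating squares, exactly as in the checks for Theorem~\ref{all-p-example}). To promote this to a string C-group I would verify the intersection property. The two ends of the path are precisely the gadgets already analysed in Lemmas~\ref{n-3plus} and~\ref{n-4plus}, so the facet subgroup $\langle \rho_0, \ldots, \rho_{n-2} \rangle$ and the vertex-figure subgroup $\langle \rho_1, \ldots, \rho_{n-1} \rangle$ inherit their structure from those lemmas; the residual intersections then reduce to a path argument, or one may instead invoke the CPR-graph criterion of \cite[Thm.~4.5]{DanielCPR} used earlier, or the quotient criterion \cite[Thm.~2E17]{arp}. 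The same connectivity data, together with the cycle structure of the generators, identifies the degree-$(n+5)$ group $\G$.

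For the internal self-duality I would exploit the symmetric layout of the graph: the low-label end and the high-label end are mirror images of one another, with the reflection interchanging each $i$-edge with an $(n-i-1)$-edge. This reflection is a permutation $\alpha$ of the $n+5$ points satisfying $\alpha \rho_i \alpha^{-1} = \rho_{n-i-1}$, so it realizes the self-duality; the decisive question is whether $\alpha$ lies in $\G$ (as a subgroup of the symmetric group on the $n+5$ points), since only then is the duality \emph{inner}. If the degree computation gives $\G \cong S_{n+5}$, then internality is immediate from Proposition~\ref{symmetric}, because $n+5 \geq 10 \neq 6$. Otherwise, when $\G$ is a proper subgroup (for instance an alternating group, where external self-duality is a genuine alternative, as noted for the $\{5,6,5\}$ example), I would produce $\alpha$ concretely: using the five-cycle $(\rho_{n-2}\rho_{n-1}\rho_{n-2}\rho_{n-3}\rho_{n-2})$ of Lemma~\ref{n-4plus} together with its mirror at the low end, $\alpha$ is assembled from these building blocks, and a parity count (five-cycles being even) shows that the reflection is attained inside $\G$. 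As a cross-check, one can run Algorithm~\ref{dual-flag-proc} starting from the vertex fixed by the whole low-label gadget and confirm that it terminates in the mirror image of the base flag.

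The main obstacle is the string C-group verification, namely making the intersection-property argument uniform in $n$ while correctly handling the interaction of the two end-gadgets across the connecting path. By comparison the self-duality is transparent once the mirror symmetry is recognized, and the only delicate point there is confirming that the realizing permutation $\alpha$ is even (hence internal), for which the five-cycle singled out in Lemma~\ref{n-4plus} is exactly the ingredient provided.
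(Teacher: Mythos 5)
Your outline matches the paper's in broad strokes (string C-group via the end-gadget lemmas and Proposition~2E16 of \cite{arp}, then a dualizing element read off the mirror symmetry of the diagram), but the step you yourself flag as ``the decisive question'' --- whether the mirror reflection $\alpha$ actually lies in $\G$ --- is exactly where your argument has a genuine hole. Your primary route assumes $\G \cong S_{n+5}$, which you never establish (and which the paper never claims; the theorem deliberately says only that $\G$ acts on $n+5$ points). Your fallback is a parity count, but evenness of $\alpha$ could only certify membership in $A_{n+5}$, not in an arbitrary transitive subgroup $\G \leq S_{n+5}$; worse, the mirror reflection is a product of about $(n+5)/2$ disjoint transpositions, so its parity alternates with $n$ and is sometimes odd. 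Also, the five-cycle $(\rho_{n-2}\rho_{n-1}\rho_{n-2}\rho_{n-3}\rho_{n-2})$ of Lemma~\ref{n-4plus} is used in the paper for an unrelated purpose --- showing that the transposition generating the $C_2$ factor of $\G_0 \cong 2 \times S_{n+3}$ really lies in $\G_0$ --- not for building the duality. The missing idea is an explicit construction of $\alpha$ as a word in the generators: for $n = 2k+1$ the middle generator $\rho_k$ is a \emph{single} transposition swapping the central mirror-pair of nodes, and conjugating it successively by $\rho_{k-1}\rho_{k+1}$, then $\rho_{k-2}\rho_{k+2}$, and so on, produces elements $\pi_1, \ldots, \pi_{k+3}$ of $\G$, each swapping one mirror-pair while fixing everything else; their product is the full reflection and hence a dualizing element of $\G$. (For $n = 2k$ one starts instead from $\pi_1 = \rho_{k-1}\rho_k\rho_{k-1}$.) Without something of this kind, internality is simply not proved.

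Two smaller points. First, if you do want to rescue your Proposition~\ref{symmetric} route, the way to do it is via the structure of the vertex-figure subgroup: once one knows $\G_0 \cong 2 \times S_{n+3}$ preserving the partition $\{1,2\} \cup \{3,\ldots,n+5\}$ while $\rho_0$ does not preserve it, maximality of $S_2 \times S_{n+3}$ in $S_{n+5}$ would identify $\G$; but that identification is itself a claim requiring proof, and neither you nor the paper makes it. Second, your treatment of the intersection property is a promissory note (``reduce to a path argument'' or ``invoke the quotient criterion''), whereas this is where the paper does most of its work: a chain of computations of the parabolic subgroups $\G_S$ (e.g.\ $\G_{0,1,n-2,n-1} \cong 2 \times 2 \times S_{n-3}$, $\G_{0,1} \cong 2 \times S_{n+1}$, $\G_0 \cong 2 \times S_{n+3}$) with orbit arguments pinning down each pairwise intersection. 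As written, that part of your proposal identifies the right tools but does not constitute a proof.
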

 
\begin{center}
$ \xymatrix@-1pc{
     *+[o][F]{3}   \ar@{-}[r]^2  \ar@{-}[d]_0     & *+[o][F]{4}  \ar@{-}[r]^1 \ar@{-}[d]^0  & *+[o][F]{5}  \ar@{-}[r]^2 & *+[o][F]{6}  \ar@{-}[r]^3 &  *+[o][F]{7}  \ar@{..}[r] &  *+[o][F]{_{n-1}} \ar@{-}[r]^{n-4}& *+[o][F]{n} \ar@{-}[r]^{n-3}  & *+[o][F]{_{n+1}} \ar@{-}[r]^{n-2} & *+[o][F]{_{n+2}} \ar@{-}[r]^{n-3}  \ar@{-}[d]_{n-1} & *+[o][F]{_{n+3}}  \ar@{-}[d]^{n-1} \\
     *+[o][F]{2}  \ar@{-}[r]_2     & *+[o][F]{1}     & & & & & & &  *+[o][F]{_{n+5}}  \ar@{-}[r]_{n-3}     & *+[o][F]{_{n+4}}  }$
  \end{center}

\begin{proof}

First we notice that these graphs have slightly different structure depending on whether $n$ is odd or even.  The two smallest cases seen below, of ranks 5 and 6, are verified to yield string C-groups using a computer algebra system.  In the rest of the proof we will assume that $n \geq 7$.

\begin{center}
\begin{tabular}{cc}
$ \xymatrix@-1pc{
     *+[o][F]{3}   \ar@{-}[r]^2  \ar@{-}[d]_0     & *+[o][F]{4}  \ar@{-}[r]^1 \ar@{-}[d]^0  & *+[o][F]{5}  \ar@{-}[r]^2 & *+[o][F]{6}  \ar@{-}[r]^3  & *+[o][F]{7} \ar@{-}[r]^{2}  \ar@{-}[d]_{4} & *+[o][F]{8}  \ar@{-}[d]^{4} \\
     *+[o][F]{2}  \ar@{-}[r]_2     & *+[o][F]{1}       & & &   *+[o][F]{10}  \ar@{-}[r]_{2}     & *+[o][F]{9}  }$ \hspace*{1cm}
&
 \hspace*{1cm} $ \xymatrix@-1pc{
     *+[o][F]{3}   \ar@{-}[r]^2  \ar@{-}[d]_0     & *+[o][F]{4}  \ar@{-}[r]^1 \ar@{-}[d]^0  &  *+[o][F]{5} \ar@{-}[r]^{2}& *+[o][F]{6} \ar@{-}[r]^{3}  & *+[o][F]{7} \ar@{-}[r]^{4} & *+[o][F]{8} \ar@{-}[r]^{3}  \ar@{-}[d]_{5} & *+[o][F]{9}  \ar@{-}[d]^{5} \\
     *+[o][F]{2}  \ar@{-}[r]_2     & *+[o][F]{1}     & & & &  *+[o][F]{11}  \ar@{-}[r]_{3}     & *+[o][F]{10}  }$
\end{tabular}
\end{center}

By design, $\G$ is self-dual, since interchanging every label $i$ with $n-1-i$ is a symmetry of the graph,
corresponding to a reflection through a line that goes through the middle edge (if $n$ is odd) or through the middle node (if $n$ is even).
Let us show that $\G$ is internally self-dual. 

First, when $n$ is odd and $n=2k+1$, the permutation $\pi_1 =  \rho_{k}$ interchanges the two nodes that are incident
on the middle edge, while fixing everything else. Then, setting $\pi_2 = (\rho_{k-1} \rho_{k+1}) \pi_1 
(\rho_{k+1} \rho_{k-1})$, we get that $\pi_2$ interchanges the next two nodes from the center, while
fixing everything else. Continuing this way, we can find permutations $\pi_1, \ldots, \pi_{k+3}$ that each
interchange a node with its dual while fixing everything else.  The product of all of these will be a dualizing automorphism in $\G$.

When $n$ is even and $n=2k$, the middle node is incident to edges labeled $k-1$ and $k$.
The permutation $\pi_1 = \rho_{k-1} \rho_{k} \rho_{k-1}$ is easily seen to interchange the two nodes that are incident
on the middle node, while fixing everything else. Then, setting $\pi_2 = (\rho_{k-2} \rho_{k+1}) \pi_1 
(\rho_{k+1} \rho_{k-2})$, we get that $\pi_2$ interchanges the two nodes at a distance of 2 from the middle node, while
fixing everything else. As in the odd case, we can continue to define permutations $\pi_1, \ldots, \pi_{k+2}$ that each
interchange a node with its dual, and the product of all of these will be a dualizing automorphism in $\G$.

Now we need to show that each $\G$ is a string C-group. It is clear that it is a string group generated by involutions, so we only need to show that it satisfies the intersection condition.  We will do this utilizing \cite[Proposition~2E16]{arp} by showing that its facet group and vertex figure group are string C-groups, and that their intersection is what is needed.

We will take advantage of the fact that $\G$ is self-dual by design, so the structure of each of its parabolic subgroups is the same as the ``dual" subgroup.  For any subset $S$ of $\{0, 1, \ldots, n-1\}$, we define $\G_S = \langle \rho_i
\mid i \not \in S \rangle$. The structure of $\G_S$ is determined by the structure of the given permutation representation graph after we delete all edges with labels in $S$. For example, the group $\G_{0,1,n-2,n-1}$ yields a permutation representation graph with a chain with labels $2$ through $n-3$, along with four isolated components; two labeled $2$ and two labeled $n-3$. It follows that this is the group obtained from a simplex by mixing with an edge, dualizing, and mixing with another edge.  It is a string C-group which is isomorphic to $2 \times 2 \times S_{n-3}$.  The groups $\G _{0,1,n-1}$ and $\G _{0,n-2,n-1}$ are maximal parabolic subgroups of the group obtained by mixing a string C-group from Lemma~\ref{n-3plus} with an edge.  Both of these groups are thus string C-groups, and can be shown to be isomorphic to $2 \times 2 \times S_{n-1}$.  Similarly, the groups $\G _{0,1,2}$ and $\G _{n-3,n-2,n-1}$ are both described by Lemma~\ref{n-3plus}.  They each are string C-groups isomorphic to $S_{n}$.   Each group $\G _{0,1}$ and $\G _{n-2,n-1}$ is the mix of a group from Lemma~\ref{n-3plus} with an edge, and can be shown to be isomorphic to $2 \times S_{n+1}$.  

Now, to show that $\G _{0,n-1}$ is a string C-group, we use the fact that both $\G _{0,1,n-1}$ and $\G _{0,n-2,n-1}$ are string C-groups acting on three orbits, and as a symmetric group on the largest orbit.  The intersection of these two groups cannot be any larger than the direct product of two groups of order two with the symmetric group acting on the points shared by the large orbits of both groups.  This is exactly the structure of $\G _{0,1,n-2, n-1}$, so $\G _{0,n-1}$ is a string C-group.  

Similarly, we show that $\G _0$ is a string C-group (the result then follows for $\G _{n-1}$, by self-duality). Since both $\G _{0,1}$ and $\G _{0,n-1}$ are string C-groups, we just need to show that their intersection is  $\G _{0,1, n-1}$.  This is true following the same logic as above, analyzing the orbits of each subgroup.

Also, following Lemma~\ref{n-4plus}, $\G _0$ is a symmetric group extended by a single transposition.  Furthermore, since element $(\rho_2 \rho_1  \rho_2 \rho_3 \rho_2)^5$ fixes all the nodes of the connected component of the graph of $\G_0$, and interchanges the nodes of the isolated edge labeled 2, it follows that this single transposition is in the group $\G_0$. Thus $\G _0 \cong 2 \times S_{n+3}$.

Finally, $\G$ is a string C-group since both $\G _0$ and $\G _{n-1}$ are string C-groups, and $\G _{0,1} \cong 2 \times 2 \times S_{n+1}$ is maximal in $\G _0 \cong 2 \times S_{n+3}$.  

\end{proof}

\section{Related problems and open questions \label{SelfPetrie}}
Some problems on the existence of internally self-dual polytopes remain open. Here are perhaps the most fundamental.

\begin{problem}
For each odd $p \geq 5$, describe an infinite family of internally self-dual regular polyhedra of type $\{p, p\}$, or prove that there are only finitely many internally self-dual regular polyhedra of that type.
\end{problem}

\begin{problem}
Determine the values of $p$ and $q$ such that there is an internally self-dual regular $4$-polytope of type $\{p, q, p\}$.
\end{problem}

\begin{problem}
Determine whether each self-dual $(n-2)$-polytope occurs as the medial section of an internally self-dual regular
$n$-polytope.
\end{problem}

To our knowledge, these problems are open even if we consider all self-dual regular polytopes,
rather than just the internally self-dual ones.

To what extent does the theory we have developed apply to transformations other than duality?
For example, the \emph{Petrie dual} of a polyhedron $\calP$, denoted $\calP^{\pi}$, is obtained from $\calP$ by
interchanging the roles of its facets and its Petrie polygons (see \cite[Sec. 7B]{arp}). If $\calP^{\pi} \cong \calP$,
then we say that $\calP$ is \emph{self-Petrie}. If $\calP$ is regular, with $\G(\calP) = \langle \rho_0, \rho_1, 
\rho_2 \rangle$, then $\calP$ is self-Petrie if and only if there is a group automorphism
of $\G(\calP)$ that sends $\rho_0$ to $\rho_0 \rho_2$, while fixing $\rho_1$ and $\rho_2$. We can then
say that $\calP$ is \emph{internally self-Petrie} if this automorphism is inner.

Working with internally self-Petrie polyhedra is not substantially different from working with
internally self-dual polyhedra, due to the following result.

\begin{proposition}
A regular polyhedron $\calP$ is internally self-Petrie if and only if $(\calP^*)^{\pi}$ is internally
self-dual.
\end{proposition}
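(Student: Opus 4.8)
The plan is to compute the canonical generators of $(\calP^*)^{\pi}$ in terms of those of $\calP$ and then observe that the automorphism witnessing self-duality of $(\calP^*)^{\pi}$ is literally the same automorphism of the same group as the one witnessing self-Petriality of $\calP$. Write $\G(\calP) = \langle \rho_0, \rho_1, \rho_2 \rangle$. The dual operation replaces the ordered generating triple by $(\rho_2, \rho_1, \rho_0)$, and the Petrie operation (which sends the first generator $a$ of a triple $(a,b,c)$ to $ac$) then produces the triple $(\rho_2 \rho_0, \rho_1, \rho_0)$. Thus $\G((\calP^*)^{\pi})$ is the same underlying group $G := \G(\calP)$, now equipped with canonical generators $(\tau_0, \tau_1, \tau_2) = (\rho_2 \rho_0, \rho_1, \rho_0)$. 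I would be careful here to apply the operations in the stated order (dual first, then Petrie) and to match the direction conventions fixed earlier in the paper.

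Next I would spell out what a dualizing automorphism $\alpha$ of $(\calP^*)^{\pi}$ must do: for a rank-three polytope it swaps $\tau_0$ with $\tau_2$ and fixes $\tau_1$. Solving the conditions $\alpha(\tau_0) = \tau_2$, $\alpha(\tau_1) = \tau_1$, $\alpha(\tau_2) = \tau_0$ for the action on $\rho_0, \rho_1, \rho_2$, and using the string relation $(\rho_0 \rho_2)^2 = \eps$ (so that $\rho_0$ and $\rho_2$ commute), one finds $\alpha(\rho_0) = \rho_2 \rho_0 = \rho_0 \rho_2$, $\alpha(\rho_1) = \rho_1$, and $\alpha(\rho_2) = \rho_0 \rho_2 \rho_0 = \rho_2$. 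But this is exactly the self-Petrie automorphism $g$ of $\calP$ recalled just before the statement. Conversely, I would verify directly that $g$ is a dualizing automorphism of $(\calP^*)^{\pi}$, again invoking commutativity of $\rho_0$ and $\rho_2$: for instance $g(\tau_0) = \rho_2(\rho_0 \rho_2) = \rho_0 = \tau_2$. So the existence of the self-Petrie automorphism of $\calP$ and the existence of the dualizing automorphism of $(\calP^*)^{\pi}$ are witnessed by one and the same map $g$ of $G$.

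To finish, I would note that the operations $*$ and $\pi$ do not change the underlying group, only the chosen canonical generators, so $\G(\calP)$ and $\G((\calP^*)^{\pi})$ are the same abstract group $G$, and being an inner automorphism is an intrinsic property of a map $G \to G$. Hence $g$ is inner when viewed in $\G(\calP)$ if and only if it is inner when viewed in $\G((\calP^*)^{\pi})$, which gives the desired equivalence of internal self-Petriality and internal self-duality. The one genuinely substantive point — the step I would watch most carefully — is the bookkeeping that collapses two superficially different automorphisms into a single one: this works precisely because $\rho_0$ and $\rho_2$ commute in any rank-three string C-group, so that the Petrie image $\rho_0 \rho_2$ and the dualizing image $\rho_2 \rho_0$ agree on the nose. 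I do not anticipate any obstacle beyond this identification and keeping the operation order and conventions straight.
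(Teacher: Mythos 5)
Your proposal is correct and follows essentially the same route as the paper: both compute the canonical generators of $(\calP^*)^{\pi}$ as $(\rho_0\rho_2, \rho_1, \rho_0)$ inside the unchanged underlying group, observe that the group automorphism witnessing self-Petriality of $\calP$ is literally the same map as the one witnessing self-duality of $(\calP^*)^{\pi}$ (using $(\rho_0\rho_2)^2 = \eps$), and conclude that it is inner for one exactly when it is inner for the other. The only cosmetic difference is that you solve for the self-duality automorphism and recognize it as the Petrie one, while the paper starts from the Petrie automorphism and checks it dualizes the new generators; the content is identical.
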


\begin{proof}
Suppose that $\calP$ is (internally or externally) self-Petrie, and let $\G(\calP) = \langle \rho_0, \rho_1, \rho_2 \rangle$.
Then there is some $\pi \in \Aut(\G(\calP))$ such that $\rho_0 \pi = \rho_0 \rho_2$, $\rho_1 \pi =
\rho_1$, and $\rho_2 \pi = \rho_2$. Now, $\G(\calP^*) = \langle \rho_2, \rho_1, \rho_0 \rangle$, and then
$\G((\calP^*)^{\pi}) = \langle \rho_0 \rho_2, \rho_1, \rho_0 \rangle =: \langle \lambda_0,
\lambda_1, \lambda_2 \rangle$. Then it is easy to show that $\lambda_i \pi = \lambda_{2-i}$ for $i = 0, 1, 2$.
It follows that $(\calP^*)^{\pi}$ is self-dual. Furthermore, since $\G(\calP) \cong \G((\calP^*)^{\pi})$
as abstract groups and $\pi$ induces the self-Petriality of the former and the self-duality of the latter,
it follows that $\pi$ is either inner for both groups or outer for both. The result then follows.
\end{proof}

Perhaps there are other transformations of polytopes where it would make sense to discuss internal versus
external invariance under that transformation.
\section{Acknowledgements}
The computations done in this paper were verified independently using GAP \cite{gap} and
Magma \cite{Magma}.

\providecommand{\bysame}{\leavevmode\hbox to3em{\hrulefill}\thinspace}
\providecommand{\MR}{\relax\ifhmode\unskip\space\fi MR }
\providecommand{\MRhref}[2]{%
  \href{http://www.ams.org/mathscinet-getitem?mr=#1}{#2}
}
\providecommand{\href}[2]{#2}

\end{document}